	\documentclass[onefignum,onetabnum]{siamart171218}
	
	

\usepackage{lipsum}
\usepackage{amsfonts}
\usepackage{amsmath}
\usepackage{amssymb}
\usepackage{graphicx}
\usepackage{epstopdf}
\usepackage{algorithmic}
\usepackage{tikz}
\usepackage{epsfig}

\usepackage{support-caption}
\usepackage{subcaption}
\usepackage{caption}

\usepackage{pgfplots}
\ifpdf
  \DeclareGraphicsExtensions{.eps,.pdf,.png,.jpg}
\else
  \DeclareGraphicsExtensions{.eps}
\fi
\usepackage{hyperref}
\usepackage{breqn}
\newtheorem{assumption}{Assumption}

\newcommand{\R}{\mathbb{R}}
\newcommand{\Prob}{\mathbb{P}}
\newcommand{\N}{\mathbb{N}}
\newcommand{\E}{\mathbb{E}}

\newcommand{\gtomega}{\widetilde{g}_\omega}

\newcommand{\ii}{\mathbf{i}}
\newcommand{\qq}{\mathbf{q}}

\newcommand{\uu}{u^*}
\newcommand{\uuh}{u^{h*}}
\newcommand{\uhat}{\widehat{u}}
\newcommand{\uhu}{\widehat{u}^*}
\newcommand{\uhhu}{\widehat{u}^{h*}}
\newcommand{\uS}{\widehat{u}_{SAGA_k}^{h}}

\newcommand{\Stt}{\widetilde{S}_n}
\newcommand{\Sti}{\widetilde{S}}

\newcommand{\ea}{\widetilde{E_1}}
\newcommand{\eb}{\widetilde{E_2}}
\newcommand{\ec}{\widetilde{E_3}}

\newcommand{\St}{S_n}

\DeclareMathOperator*{\argmin}{arg\,min}
\DeclareMathOperator*{\dive}{div}


\newsiamremark{remark}{Remark}
\newsiamremark{hypothesis}{Hypothesis}
\crefname{hypothesis}{Hypothesis}{Hypotheses}
\newsiamthm{claim}{Claim}

\headers{{PDE}-constrained OCP with uncertain parameters using SAGA}{M.~Martin, and F.~Nobile}

\title{{PDE}-constrained optimal control problems with uncertain parameters using SAGA\thanks{Submitted to the editors on October 31st, 2018.
\funding{F. Nobile and M. Martin have received support from the
Center for ADvanced MOdeling Science (CADMOS).}}}

\author{Matthieu~C.~Martin\thanks{Criteo AI Lab - Grenoble, 4 Rue des Méridiens, 38130 Échirolles France
  (\email{mat.martin@criteo.com}).}
\and Fabio~Nobile\thanks{CSQI, Institute of Mathematics,
  {\'E}cole Polytechnique F{\'e}d{\'e}rale de Lausanne, 1015 Lausanne,
  Switzerland
(\email{fabio.nobile@epfl.ch}).}
  }


\usepackage{amsopn}


        \usepackage{soul}
        \soulregister\cite{7}
        \soulregister\ref{7}
        \soulregister\eqref{7}

	\newcommand{\FNRm}[1]{}

	\newcommand{\g}{a}
	\ifpdf
	\hypersetup{
	  pdftitle={{PDE}-constrained OCP with uncertain parameters using SAGA},
	  pdfauthor={M.~Martin, and F.~Nobile}
	}
	\fi
	
	
	
	
	 \begin{document}
	
	\maketitle
	
	\begin{abstract}
	We consider an optimal control problem (OCP) for a partial
        differential equation (PDE) with random coefficients. The
        optimal control function is a deterministic, distributed
        forcing term that minimizes an expected quadratic regularized
        loss functional.
     	For the numerical approximation of this PDE-constrained OCP,
        we replace the expectation in the objective functional by a
        suitable quadrature formula and, eventually, discretize the
        PDE by a Galerkin method.
	To practically solve such approximate OCP, we propose an
        importance sampling version the SAGA algorithm
        \cite{DefazioBachLacosteJulien}, a type of Stochastic Gradient
        algorithm with a fixed-length memory term, which computes at
        each iteration the gradient of the loss functional in only one
        quadrature point, randomly chosen from a possibly non-uniform
        distribution.
	We provide a full error and complexity analysis of the proposed
	numerical scheme. In particular we compare the complexity of the
	generalized SAGA algorithm with importance sampling, with that of the Stochastic
	  Gradient (SG) and the Conjugate Gradient (CG) algorithms, applied to
	the same discretized OCP.
	We show that SAGA converges exponentially in the number of
	iterations as for a CG algorithm and has a similar asymptotic
	computational complexity, in terms of computational cost versus accuracy. 
        Moreover, it features good pre-asymptotic
	properties, as shown by our numerical experiments, which makes it
	appealing in a limited budget context. 
	\end{abstract}
	
	\begin{keywords}
	PDE constrained optimization, risk-averse
	optimal control, optimization under uncertainty, PDE with random coefficients, stochastic
	approximation, stochastic gradient, Monte Carlo, SAG, SAGA, importance sampling
	\end{keywords}
	
	\begin{AMS}
	35Q93,  49M99, 65C05, 65N12, 65N30
	\end{AMS}

        \section{Introduction}
        In this paper we consider a risk averse optimal control
        problem (OCP) for a linear coercive PDE with random parameters of the form
        \begin{equation}\label{eq:primal_intro}
          A_\omega y_\omega(u) = g_\omega + B_\omega u
        \end{equation}
        where $\omega\in\Gamma$ denotes a random elementary event, 
        $g_\omega$ is a random forcing term, 
        $A_\omega$ is a random coercive differential operator, $u$ is
        the \emph{deterministic} control function, whose action on the
        system is represented by the, possibly random, operator
        $B_\omega$, and $y_\omega(u)$ denotes the solution of the PDE
        corresponding to the control $u$ and random event $\omega$.
        The optimal control is the one that minimizes an expected
        tracking type quadratic functional
	\begin{equation}\label{eq:ocp_general}
	  u^* \in \argmin_{u\in U} J(u):= \E_\omega[f(u,\omega)], \quad \text{with } f(u,\omega) = \frac{1}{2}\|H_\omega y_\omega(u)-z_d\|_H^2+ \frac{\beta}{2}\|u\|^2_U.
	\end{equation}
        Here, $H_\omega$ is an observation operator and $z_d$ the
        (deterministic) target observed state. The objective
        functional $J(u)$ includes a possible penalization term for
        well posedness. In the setting considered in this work, the
        function $u \mapsto f(u,\omega)$ is strongly convex for any
        $\omega\in \Gamma$ with Lipschitz continuous gradient (Gateaux
        derivative). We present in Section \ref{sec2} of this work a
        specific example of a controlled diffusion equation with
        random diffusivity coefficient, which fits the general setting
        above.

        The PDE-constrained OCP
        \eqref{eq:primal_intro}-\eqref{eq:ocp_general} is
        infinite-dimensional and, as such, has to be suitably
        discretized to be solved numerically. In this work, we focus
        on an approximation of the expectation operator by a suitable
        (deterministic or randomized) quadrature formula $\E[X]
        \approx \sum_{j=1}^n \zeta_j X(\eta_j)$, with
        $X:\Gamma\to\mathbb{R}$ and integrable random variable,
        $\{\eta_j\}_j$ the quadrature knots and $\{\zeta_j\}_j$ the
        quadrature weights such that $\sum_{j=1}^n \zeta_j = 1$.  For
        example, one could use a Monte Carlo quadrature where $\eta_j$
        are drawn independently from the underlying probability
        measure and the quadrature weights are uniform $\zeta_j=1/n$.
        Alternatively, if the randomness in the system can be
        parameterized by a small number of independent random
        variables, one could use deterministic quadrature formulas
        such as a tensorized Gaussian one.  The use of tensorized full
        or sparse Gauss-type quadrature formulas to approximate
        statistics of the solution of a random PDE, also referred to
        as Stochastic Collocation method, has been widely explored in
        the literature in recent years. See
        e.g. \cite{NobileSto,nobile.tamellini.tempone:convergence,schillings.schwab:sparse,haji-ali.nobile.eal:MISC2,ernst.sprungk.tamellini:convergence,zech.dung.schwab:multilevel}
        and references therein.
        By introducing a quadrature formula, we are led to
        the approximate OCP
	\begin{equation}\label{eq:ocp_approx_general}
	  \widehat u^* \in \argmin_{u\in U} \widehat J(u):=\sum_{j=1}^n \zeta_j f(u,\eta_j), \quad \text{with } f(u,\eta_j) = \frac{1}{2}\|H_{\eta_j} y_{\eta_j}(u)-z_d\|_H^2+ \frac{\beta}{2}\|u\|^2_U
	\end{equation}
	and $y_{\eta_j}(u)$ solving equation \eqref{eq:primal_intro}
        for the particular realization $\eta_j$ of the randomness.
        
	To solve the approximate OCP \eqref{eq:ocp_approx_general},
        one could consider a gradient based method, such as steepest
        descent, hereafter called Full Gradient (FG), or Conjugate
        Gradient (CG), which converge, in our setting, exponentially
        fast in the number of iterations,
        i.e. $\|\widehat{u}_k-\widehat{u}^*\|_U \le C \rho^k$ for some
        $\rho\in (0,1)$ and a constant $C>0$ independent of the number
        $n$ of quadrature points, where $\widehat{u}_k$ denotes the
        $k$-th iterate of the method. For a given control $u$,
        evaluating the gradient of the objective functional $\nabla
        \widehat J(u)$ entails the computation of the $n$ solutions
        $\{y_{\eta_j}(u)\}_{j=1}^n$ of the underlying PDE as well as
        $n$ solutions of the corresponding adjoint equation.  The
        practical limitation of this approach is that, if $n$ is
        large, the cost of one single iteration may become already
        excessively high.
	  
        The approximate OCP \eqref{eq:ocp_approx_general} can be
        rewritten equivalently as
	\begin{equation}\label{eq:ML_objective}
	\widehat u^* \in \argmin_{u\in U} \widehat J(u):=\frac{1}{n}\sum_{j=1}^n g_j(u), \quad \text{with } g_j(u) = n\zeta_j f(u,\eta_j)
	\end{equation}
	which has the typical structure of an empirical risk
        minimization problem, common in statistical learning.  A
        popular technique in the machine learning community to solve
        this type of optimization problems is the \emph{Stochastic
          Gradient} (SG) method which reads, in its Robbins-Monro
        version \cite{robbins1951}
	\[
	  \widehat{u}_{k+1} = \widehat{u}_k - \tau_k  \nabla g_{i_k}(\widehat{u}_k)
	\]
	where the gradient of only one term in the sum is evaluated at
        each iteration (corresponding, in our setting, to one primal
        and one adjoint computation) for a randomly drawn index $i_k$,
        and the convergence is achieved by reducing the step size
        $\tau_k$ over the iterations. This makes the cost of each
        iteration affordable. The convergence of the SG method for a
        PDE-constrained optimal control problem with uncertain
        parameters involving a diffusion equation has been studied in
        the recent work \cite{MartinKrumscheidNobile} in the context
        of a Monte Carlo approximation of the expectation appearing in
        \eqref{eq:ocp_general}. In particular, we have shown that the
        root mean squared error $\E[\|\widehat u_k -\widehat
          u^*\|^2_U]^{\frac{1}{2}}$ of the SG method converges with
        order $1/\sqrt{k}$, which is the same order of the Monte Carlo
        ``quadrature'' error, and leads to an optimal strategy and a
        slightly better overall complexity than a FG or CG approach.
        If, however, a more accurate quadrature formula is used,
        providing a convergence order faster than $1/\sqrt{k}$, the SG
        method still converges with order $1/\sqrt{k}$ and would lead
        to a worse complexity than FG or CG.

	In recent years, variants of the SG method, such as the SAGA
        method \cite{DefazioBachLacosteJulien}, have been proposed for
        finite dimensional optimization problems of the form
        \eqref{eq:ML_objective}, which are able to recover an
        exponential convergence in the number of iterations. SAGA
        exploits a variance reduction technique and requires
        introducing a memory term which stores all previously computed
        gradients in the sum and overwrite a term if the corresponding
        index is re-drawn.  Specifically the SAGA algorithm for
        problem \eqref{eq:ML_objective} reads:
	\begin{equation}
	\widehat{u}_{k+1}=\widehat{u}_k-\tau \left(\nabla g_{i_k}(\widehat{u}_k)-\nabla g_{i_k}(\phi_{i_k}^k)+\frac{1}{n}\sum_{j=1}^{n}\nabla g_j(\phi_j^k)\right)
	\end{equation}
	where, at each iteration $k$, an index $i_k \in \{1, \dots,
        n\}$ is selected \emph{at random uniformly} on
        $\{1,\ldots,n\}$, and only the memory term at the sampled
        position $i_k$ is updated, so that
        $\phi_{j}^{k+1}=\widehat{u}_k$ if $j=i_k$ and
        $\phi_{j}^{k+1}=\phi_{j}^{k}$ otherwise.  The major
        improvement of SAGA with respect to SG is its exponential
        convergence $(1-\epsilon)^k$ provided that the objective
        functionals $g_i$ are all strongly-convex and with Lipschitz
        continuous gradients, with uniform bounds in $i$
        \cite{SchmidtLeRouxBach}. Such convergence is significantly
        faster than the $1/\sqrt{k}$ rate of SG.  However, the
        problem-dependent rate $\epsilon$ depends on the number $n$ of
        terms in the sum and behaves asymptotically as $1/8n$. As
        pointed out by the authors in \cite{SchmidtLeRouxBach}, this
        still guarantees an effective ($n$ independent) error
        reduction when going through a full sweep over the $n$ terms
        in the sum since $(1-1/8n)^n \leq \exp(-1/8)$. Thus, each pass
        through all the terms in the sum reduces the error by a
        constant multiplicative factor as in the FG or CG algorithm.

        In the context of our approximate OCP
        \eqref{eq:ocp_approx_general}, however, the objective
        functionals $g_j$ will not have uniform convexity and
        Lipschitz bounds in $i$, in general. Indeed, in our setting,
        this will be true for the functionals $f(u,\eta_i)$ but not
        for the functionals $g_i(u) = n\zeta_i f(u,\eta_i)$ due to the
        presence of the possibly non-uniform quadrature
        weights. Moreover, the OCP \eqref{eq:ocp_approx_general} is
        naturally infinite dimensional whenever the control is
        distributed. Therefore, the available results for SAGA can not
        be applied straightforwardly to our setting.

        The presence of non-uniform weights in 
        \eqref{eq:ocp_approx_general} suggests that the index $i_k$
        should possibly be drawn from a non-uniform distribution
        over $\{1,\ldots,n\}$.
        In this paper, we consider an extension of the SAGA algorithm
        that uses non-uniform sampling of the index $i_k$ at each
        iteration, from a given auxiliary distribution
        $\widetilde{\zeta}$ on $\{1,\ldots,n\}$, named SAGA with
        Importance Sampling (SAGA-IS), and we investigate the question
        of the optimal choice of the importance sampling distribution
        $\widetilde{\zeta}$.  We should point out that non-uniform
        sampling within SAGA has already been proposed in
        \cite{SchmidtDefazioBabanezhad}, however, with the goal of
        compensating possible dissimilarities in the Lipschitz
        constants of the functionals $g_i$, whereas these are still
        assumed to be uniformly strongly convex. Our analysis and
        choice of the importance sampling distribution differs
        therefore from \cite{SchmidtDefazioBabanezhad}, as it serves a
        different purpose.

        Following similar steps as in
        \cite{DefazioBachLacosteJulien,SchmidtDefazioBabanezhad}, we
        present a full theoretical convergence analysis of the SAGA-IS
        method for the infinite dimensional OCP
        \eqref{eq:ocp_approx_general} in the case of positive
        quadrature weights. In particular we show that, asymptotically
        in $n$, the optimal importance sampling measure for the
        indices $i_k$ is the \emph{uniform} measure (even when the
        weight are non uniform) and that the error decays, in a mean
        squared sense, as $\|\widehat{u}_k-\widehat{u}^*\|\le C
        (1-\epsilon)^k$ with $\epsilon\sim n^{-1}$ and $C$ independent
        of $n$ (similarly to the results in \cite{DefazioBachLacosteJulien}).
	
	We also present a complexity analysis, in terms of
        computational cost versus accuracy, of the SAGA-IS method to
        solve the \emph{original} OCP \eqref{eq:ocp_general}, which
        accounts for both the quadrature error as well as the error in
        solving the primal and adjoint PDEs approximately by a
        Galerkin method. Here, we assume that the quadrature error
        decays sub-exponentially fast in the number of quadrature
        points, an assumption that is verified by the controlled
        diffusion problem presented in Section \ref{sec2} when using a
        tensorized Gauss-Legendre quadrature formula. The complexity
        of SAGA is then compared to the complexity of CG as well as
        SG. Our theoretical results show that the SAGA method has the
        same asymptotic complexity as the CG method and outperforms
        SG.
	As shown by our numerical experiments, the interest in using
        SAGA versus CG is in the pre-asymptotic regime, as SAGA
        delivers acceptable solutions, from a practical point of view,
        well before performing $n$ iterations, i.e. with far less that
        $2n$ PDE solves (we recall that one single CG iteration
        entails already $2n$ PDE solves).  In a context of limited
        budget, SAGA represents therefore a very appealing option.

        The outline of the paper is as follows. In Section \ref{sec15}
        we introduce the general PDE-constrained OCP considered in
        this work as well as its discretized version where expectation
        in the objective functional is approximated by a quadrature
        formula and the PDE is possibly approximated by a Galerkin
        method. In Section \ref{sec2} we give a specific example of a
        controlled diffusion equation with random coefficients, which
        fits the general framework. Then, in Section \ref{sec4} we
        introduce the Stochastic Gradient and SAGA methods with
        importance sampling and study their convergence and
        complexity. In Section \ref{sec5} we present some numerical
        results that confirm the theoretical findings. Finally, in
        Section \ref{conclusions} we draw some conclusions.

	\section{General PDE-constrained quadratic OCP}
	\label{sec15} 
	\subsection*{The state problem}
	We consider a physical system whose state $y_\omega$ is well described by the solution of a linear, coercive, PDE, which contains some random parameters $\omega\in\Gamma$ describing, for instance, randomness in the forcing terms,  or uncertainties in some parameters of the PDE. We write the PDE in abstract operator form as 
	\begin{equation} \label{eq:PDErdm}
	A_\omega y_\omega=g_\omega +B_\omega u, \quad u \in U, \quad y_\omega \in Y, \quad \omega \in \Gamma.
	\end{equation}
	where $g_\omega$ is a random forcing term and $u$ is a, possibly
	distributed, control term that can be used to modify the behavior of
	the system and drive it to a target state.  We admit that the action
	of the control on the system, expressed here by the operator
	$B_\omega$ could also be affected by randomness or uncertainty.
	
	In \eqref{eq:PDErdm}, we denote by $(\Gamma,\mathcal{F}, \mathbb{P})$ the underlying (complete) probability space, by $U$ the set of admissible control functions, assumed here to be a Hilbert space, and by $Y$ the space of solutions to the PDE, assumed here to be a Banach space. We also assume that equation \eqref{eq:PDErdm} holds in $Y'$, the dual space of $Y$ and that $g_\omega\in Y'$, $A_{\omega} \in\mathcal{L}(Y, Y')$, $B_\omega \in \mathcal{L}(U, Y')$ for $\mathbb{P}$-almost every $\omega\in\Gamma$.
	
	The solution of \eqref{eq:PDErdm} for a given control $u$ and a given realization $\omega\in\Gamma$ will be equivalently denoted by $y_\omega (u)$, $y_\omega$ or simply $y(u)$ in what follows, and is given by the formula $y_{\omega}(u)=A_\omega^{-1}(g_\omega+B_\omega u)$, provided the operator $A_\omega$ is invertible.

	\subsection*{The Optimal Control Problem}
	We focus on some part of the state, or some observable quantity of interest $Q_\omega y_\omega \in H$, with $H$ a suitable Hilbert space and $Q_\omega\in\mathcal{L}(Y,H)$ the observation operator, which may as well have some random components.
	The control $u\in U$ can be used to drive the quantity of interest $Q_\omega y_\omega$ to a target value $z_d$. For a given control $u\in U$ and a given realization $\omega\in\Gamma$, we define the quadratic misfit function
	\[
	\tilde f(u,\omega) := \frac{1}{2}\left\|Q _\omega y_\omega-z_{d}\right\|_{H}^{2}.
	\]
	An optimal control should aim at minimizing such function, which is, however, random as it depends on the realization $\omega$. We assume in this work that the randomness is not observable at the moment of designing the control, and look therefore for an optimal control that is \emph{robust} with respect to the randomness. In particular, we consider the optimal control that minimizes the expected quadratic loss plus an $L^2$-regularization term for well posedness. This leads to the following OCP:
	\begin{equation} \label{eq:OCP0}
	\gathered
	  \uu \in \argmin _{ u \in  U} J(u):=\E \left[f(u, \omega)\right]\\
	\text{with } f(u, \omega)=\frac{1}{2} \left\|Q_\omega  y_\omega(u)-z_{d}\right\|_{H}^{2}+\frac{\beta}{2} \|u\|_{U}^{2} \text{ and } y_\omega(u)=A_\omega^{-1}(g_\omega+B_\omega u).
	\endgathered
	\end{equation}
	It is worth introducing also a weak formulation of the OCP \eqref{eq:OCP0}. 
	Let $b_{\omega}:Y\times Y\to \mathbb{R}$  denote the bi-linear form associated to the operator  $A_\omega$, namely $b_{\omega}(y,v) = \left(A_\omega y,v\right)_{Y,Y'}, \; \forall y,v\in Y$. Then, the weak formulation of the linear PDE \eqref{eq:PDErdm} reads
	\begin{equation} \label{eq:weak_primal}
	\mathrm{find} \hspace{1mm} y_{\omega} \in Y \hspace{1mm} s.t. \hspace{1mm} b_{\omega}(y_{\omega},v)=\left( g_{\omega}+B_{\omega}u,v \right)_{Y,Y'}  \quad \forall v \in Y, \quad  \text{for $\mathbb{P}$-a.e. } \omega \in \Gamma,
	\end{equation} 
	and we can rewrite the OCP \eqref{eq:OCP0} equivalently as:
	\begin{equation} \label{eq:weak_OCP}
	\left\{ 
	\arraycolsep=1.0pt\def\arraystretch{1}
	\begin{array}{l}
	\min_{u \in  U}J(u), \quad J(u)=\E\left[ f(u, \omega) \right] \\
	\mathrm{s.t.} \quad y_{\omega}(u) \in Y \quad \mathrm{solves}\\
	b_{\omega}(y_{\omega}(u),v)=\left(g_{\omega}+B_{\omega}u,v \right)_{Y,Y'},  \quad \forall v \in Y, \quad  \text{for $\mathbb{P}$- a.e. } \omega \in \Gamma.
	\end{array}
	\right.
	\end{equation}

	\subsection*{{Assumptions for well posedness}}
	We present here a set of assumptions on the OCP \eqref{eq:OCP0} that guarantee its well posedness. Such assumptions are an easy generalization of those in \cite{Hinze2009} to the stochastic setting and also generalize the setting in \cite[Sections 2 and 3]{MartinKrumscheidNobile}.
	
	\begin{assumption}\label{ass:general}
	~
	\begin{enumerate}
	\item $\beta> 0$;
	\item $A_\omega \in L_{\Prob}^{\infty}\left(\Gamma;  \mathcal{L}(Y,Y')\right)$, i.e. $\exists a_{\text{max}}>0$ such that for $\mathbb{P}\text{-a.e. } \omega \in \Gamma$, $A_\omega \in \mathcal{L}(Y, Y')$ and  $\| A_\omega \|_{\mathcal{L}(Y,Y')}\leq a_{\text{max}}$;
	\item For $\mathbb{P}\text{-a.e. } \omega \in \Gamma$, $A_\omega$ is invertible with uniformly bounded inverse, and $\exists a_{\text{min}}>0$ s.t. ${a_{\min}}\|y\|_Y^2 \leq ( A_\omega y,y )_{Y,Y'}$;
	\item $g \in L_{\Prob}^2(\Gamma; Y')$, i.e. $\int_{\Gamma} \| g_\omega \|^2_{Y'} \text{d}\Prob(\omega)<+\infty$;
	\item $B \in L_{\Prob}^{\infty}\left(\Gamma; \mathcal{L}(U, Y')\right)$, i.e. there exists $M_1 \in \R$ s.t. $\| B_\omega \|_{ \mathcal{L}(U, Y')} \leq M_1$, for $\mathbb{P}\text{-a.e. } \omega \in \Gamma$;
	\item $Q  \in L_{\Prob}^{\infty}\left(\Gamma; \mathcal{L}(Y, H)\right)$, i.e. there exists $M_2 \in \R$ s.t.  $ \| Q_\omega \|_{ \mathcal{L}(Y, H)} \leq M_2$; for $\mathbb{P}\text{-a.e. } \omega \in \Gamma$;
	\end{enumerate}
	\end{assumption}

	\subsection*{Gradient computation and adjoint problem}
	
	In what follows, we denote by $\nabla J(u)$ the functional representation of the Gateaux derivative of $J$ in the space $U$, defined as 
	\[
	\int_D \nabla J(u) \delta u \hspace{1mm}\mathrm{d}x = \lim_{\epsilon \rightarrow 0} \frac{J(u+\epsilon \delta u)-J(u)}{\epsilon},  \quad \forall \, \delta u \in U.
	\]
	Moreover, the adjoint of a linear operator $C:W\to Z$ between linear
	spaces, will be defined via duality paring $(\cdot,\cdot)_{W,W'}$
	(resp $(\cdot,\cdot)_{Z,Z'}$) whenever $W$ (resp. $Z$) is a reflexive Banach
	space and via the inner product $\langle\cdot,\cdot\rangle_W$
	(resp. $\langle\cdot,\cdot\rangle_Z$) whenever $W$ (resp. $Z$) is a
	Hilbert space. So, for instance, the adjoint of $A_\omega$ satisfies
	\[
	A^*_\omega\in\mathcal{L}(Y,Y'), \qquad (A^*_\omega v,y)_{Y,Y'}=(A_\omega y,v)_{Y,Y'}, \;\; \forall y,v\in Y
	\]
	whereas the adjoints of the operators $B_\omega$ and $Q_\omega$ satisfy
	\begin{align*}
	B^*_\omega\in\mathcal{L}(Y,U), \qquad \langle B^*_\omega v,u\rangle_{U} = (B_\omega u,v)_{Y,Y'}, \;\; \forall u\in U, \; v\in Y, \\
	Q^*_\omega\in\mathcal{L}(H,Y), \qquad (Q^*_\omega z,y)_{Y,Y'}= \langle Q_\omega y,z\rangle_{H}, \;\; \forall y\in Y, \; z\in H.
	\end{align*}
	With these definitions, we have the following characterization of $\nabla J(u)$ and $\nabla f(u,\omega)$.
	\begin{lemma}\label{lem:gradient} Let Assumption \ref{ass:general} hold. For the OCP \eqref{eq:OCP0} it holds for any $u\in U$ and $\mathbb{P}\text{-a.e. } \omega\in \Gamma$
	  \begin{equation}
	    \nabla f(u,\omega) = \beta u + B^*_\omega p_\omega, \qquad \nabla J(u) = \E\left[ \nabla f(u,\cdot)\right]=\beta u +\E[B^*_\omega p_\omega]
	  \end{equation}
	  where $p_\omega$ satisfies the adjoint equation
	  \begin{equation} \label{eq:adjoint_general}
	  A_{\omega}^*p_{\omega}=Q_\omega ^{*}(Q_\omega y_\omega-z_d),
	  \end{equation}
	  which, in weak formulation reads
	  \begin{equation} \label{eq:adjoint_weak}
	\mathrm{find} \hspace{1mm} p_{\omega} \in Y \hspace{1mm} s.t. \hspace{1mm}b_{\omega}(v,p_{\omega})=\langle Q_\omega  v,Q_\omega  y_{\omega}-z_d \rangle_ H  \quad \forall v \in Y, \quad  \text{for $\mathbb{P}$- a.e. } \omega \in \Gamma.
	\end{equation}
	\end{lemma}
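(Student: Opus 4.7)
The plan is to follow the standard adjoint-based derivation, being careful with the various duality pairings and the differentiation under the expectation. First, I would fix $\omega$ and compute the Gateaux derivative of $f(\cdot,\omega)$ directly. Because the state equation \eqref{eq:PDErdm} is linear in $u$, for any direction $\delta u \in U$ the difference quotient of the state satisfies
\begin{equation*}
\frac{y_\omega(u+\varepsilon\delta u)-y_\omega(u)}{\varepsilon} = A_\omega^{-1}B_\omega\delta u =: \delta y_\omega,
\end{equation*}
so, letting $\varepsilon\to 0$ in the misfit term (and using that the regularization is quadratic in $u$), one obtains
\begin{equation*}
\bigl(\nabla f(u,\omega),\delta u\bigr)_U = \langle Q_\omega y_\omega-z_d,\, Q_\omega \delta y_\omega\rangle_H + \beta\,\langle u,\delta u\rangle_U.
\end{equation*}

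Second, I would eliminate the derivative of the state by introducing the adjoint variable $p_\omega$ solving \eqref{eq:adjoint_general}, equivalently \eqref{eq:adjoint_weak}. Well-posedness of \eqref{eq:adjoint_weak} follows immediately from Assumption~\ref{ass:general}: the form $b_\omega$ has the same coercivity and continuity constants $a_{\min},a_{\max}$ when the arguments are swapped, and the right-hand side $\langle Q_\omega\cdot, Q_\omega y_\omega-z_d\rangle_H$ is in $Y'$ by items~2, 3, 6 applied to $y_\omega$. Testing \eqref{eq:adjoint_weak} with $v=\delta y_\omega = A_\omega^{-1}B_\omega \delta u\in Y$ gives
\begin{equation*}
\langle Q_\omega\delta y_\omega,\, Q_\omega y_\omega-z_d\rangle_H = b_\omega(\delta y_\omega, p_\omega) = (A_\omega \delta y_\omega, p_\omega)_{Y',Y} = (B_\omega\delta u, p_\omega)_{Y',Y} = \langle \delta u, B^*_\omega p_\omega\rangle_U,
\end{equation*}
where the last equality uses the definition of $B^*_\omega$. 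Substituting back yields $\nabla f(u,\omega)=\beta u + B^*_\omega p_\omega$ for $\Prob$-a.e. $\omega$, as claimed.

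Third, to pass from $\nabla f(u,\omega)$ to $\nabla J(u)$ I would justify swapping the Gateaux derivative with the expectation, i.e. show $\nabla J(u) = \E[\nabla f(u,\cdot)]$. The natural tool is dominated convergence applied to the difference quotient $\varepsilon^{-1}(f(u+\varepsilon\delta u,\omega)-f(u,\omega))$: by linearity of the state equation this quotient is an explicit quadratic expression in $\varepsilon$ whose $\omega$-dependent coefficients can be bounded uniformly in $\varepsilon\in[-1,1]$ in terms of $\|g_\omega\|_{Y'}$, $\|A_\omega^{-1}\|$, $\|B_\omega\|$ and $\|Q_\omega\|$. Under Assumption~\ref{ass:general} (items 2--6) these bounds are integrable, so dominated convergence applies and the exchange is legitimate; linearity of the expectation then gives $\nabla J(u)=\beta u + \E[B^*_\omega p_\omega]$.

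The only step with any real subtlety is this last one, namely certifying that $\omega\mapsto B^*_\omega p_\omega$ is Bochner integrable in $U$ and that the exchange of $\E$ with the limit is admissible; everything else is a bookkeeping exercise in duality pairings. The integrability follows from the a priori estimates $\|y_\omega\|_Y\lesssim a_{\min}^{-1}(\|g_\omega\|_{Y'}+M_1\|u\|_U)$ and $\|p_\omega\|_Y\lesssim a_{\min}^{-1}M_2(M_2\|y_\omega\|_Y+\|z_d\|_H)$, together with $\|B^*_\omega\|_{\mathcal{L}(Y,U)}=\|B_\omega\|_{\mathcal{L}(U,Y')}\le M_1$, so $\E\|B^*_\omega p_\omega\|_U<\infty$ by item 4 of the assumption.
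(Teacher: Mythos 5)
Your proposal is correct and follows essentially the same adjoint-calculus route as the paper: the paper expands $f(\cdot,\omega)$ as an explicit quadratic in $u$ and reads off the gradient $\beta u + B_\omega^*A_\omega^{-*}Q_\omega^*(Q_\omega y_\omega - z_d) = \beta u + B_\omega^* p_\omega$, while you reach the same identity by differentiating the state and testing the adjoint equation with the sensitivity $\delta y_\omega=A_\omega^{-1}B_\omega\delta u$. You are in fact more careful than the paper on one point: the paper simply writes $\nabla J(u)=\E[\nabla f(u,\cdot)]$ without comment, whereas you justify the exchange of differentiation and expectation by dominated convergence and verify Bochner integrability of $\omega\mapsto B_\omega^* p_\omega$ using the a priori bounds and Assumption \ref{ass:general}.
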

	\begin{proof}
	Denoting $\gtomega = Q_\omega  A_\omega^{-1}g_\omega-z_d$, we can write the quadratic functional $f(u,\omega)$ for $\mathbb{P}\text{-a.e. } \omega\in \Gamma$ as
	\begin{align*}
	f(u,\omega)  = &  \frac{1}{2} \| Q_\omega  A_\omega^{-1}(g_\omega+B_\omega u)-z_d \|_H^2 + \frac{\beta}{2} \| u \|_ U^2\\
	= & \frac{1}{2} \| Q_\omega  A_\omega^{-1}B_\omega u + \gtomega \|_H^2 +\frac{\beta}{2} \| u \|_ U^2\\
	 = &\frac{1}{2} \| Q_\omega  A_\omega^{-1}B_\omega u \|_H^2 + \langle Q_\omega  A_\omega^{-1}B_\omega u,\gtomega  \rangle_H 
	+\frac{1}{2} \| \gtomega \|_H^2 +\frac{\beta}{2} \| u \|_ U^2\\
	=& \frac{1}{2} \langle B_\omega^* A_\omega^{-*}Q_\omega ^* Q_\omega  A_\omega^{-1} B_\omega u ,u \rangle_{U} 
	+\langle B^*_\omega A_\omega^{-*} Q^*_\omega  \gtomega, u  \rangle_U \\
	&+\frac{1}{2} \| \gtomega \|_H^2 
	+\frac{\beta}{2} \| u \|_U^2
	\end{align*}
	and its Gateaux derivative at $u$ in the direction $\delta u\in U$ is given by
	\begin{equation}
	\langle \nabla f(u,\omega), \delta u\rangle_{U}= \langle B_\omega^* A_\omega^{-*}Q_\omega ^* Q_\omega  A_\omega^{-1} B_\omega u ,\delta u \rangle_{U} + \langle  B_\omega^* A_\omega^{-*}Q_\omega ^* \gtomega, \delta  u  \rangle_{U} +\beta  \langle  u, \delta  u  \rangle_{U}
	\end{equation}
	which leads to 
	\begin{align*}
	\nabla f(u,\omega)=&\beta u+ B_\omega^* A_\omega^{-*}Q_\omega ^*(Q_\omega  A_\omega^{-1} (B_\omega u+g_\omega)-z_d) \\
	=&\beta u+ B_\omega^* A_\omega^{-*}Q_\omega ^*(Q_\omega  y_\omega  -z_d ) \\
	=&\beta u+ B_\omega^* p_\omega
	\end{align*}
	and
	\[
	  \nabla J(u) = \E[\nabla f(u,\omega)] = \beta u+ \E[B_\omega^* p_\omega].
	\]
	
	\,
	\end{proof}
	Moreover, $\nabla f(u,\omega)$ satisfies the following uniform Lipschitz and strong convexity properties.
	\begin{lemma} Under Assumption \ref{ass:general}, it holds for any $u_1,u_2\in U$ and $\mathbb{P}\text{-a.e. } \omega\in\Gamma$
	  \begin{align}
	    &\text{(Strong convexity)} &&  \frac{l}{2} \| u_1-u_2 \|_U^2 \leq \langle \nabla f(u_1,\omega)-\nabla f(u_2,\omega) , u_1-u_2 \rangle_U,
	  \label{lemma:SC} \\
	    &\text{(Lipschitz property)} && \|\nabla f(u_1,\omega)-\nabla f(u_2,\omega)\|_U \leq L \| u_1-u_2 \|_U, \label{lemma:Lip} 
	  \end{align}
	  with $l=2 \beta$ and $L=\beta +\left(\frac{M_1M_2}{a_{\min}}\right)^2$.
	\end{lemma}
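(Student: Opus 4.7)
The plan is to reduce everything to the explicit representation $\nabla f(u,\omega) = \beta u + B^*_\omega p_\omega$ from Lemma~\ref{lem:gradient}, so that the difference $\nabla f(u_1,\omega) - \nabla f(u_2,\omega) = \beta(u_1-u_2) + B^*_\omega(p_{\omega,1} - p_{\omega,2})$, where $p_{\omega,i}$ denotes the adjoint state associated with $u_i$. Since the state equation and the adjoint equation are both affine in $u$, by linearity one has $y_{\omega,1} - y_{\omega,2} = A_\omega^{-1} B_\omega (u_1-u_2)$, and the difference $\delta p := p_{\omega,1} - p_{\omega,2}$ solves
\begin{equation*}
A^*_\omega \delta p = Q^*_\omega Q_\omega (y_{\omega,1} - y_{\omega,2}) = Q^*_\omega Q_\omega A^{-1}_\omega B_\omega (u_1-u_2).
\end{equation*}
These two identities are the only PDE inputs needed for the rest of the proof.

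For the strong convexity bound I would pair $\nabla f(u_1,\omega)-\nabla f(u_2,\omega)$ with $u_1-u_2$ and treat the two summands separately. The $\beta$-term immediately yields $\beta\|u_1-u_2\|_U^2$. For the remaining cross term, using the definition of $B^*_\omega$ and testing the adjoint equation above against $y_{\omega,1} - y_{\omega,2}$,
\begin{equation*}
\langle B^*_\omega \delta p, u_1-u_2\rangle_U = (B_\omega(u_1-u_2), \delta p)_{Y',Y} = (A^*_\omega \delta p, y_{\omega,1}-y_{\omega,2})_{Y',Y} = \|Q_\omega(y_{\omega,1}-y_{\omega,2})\|_H^2 \ge 0.
\end{equation*}
This gives $\langle \nabla f(u_1,\omega)-\nabla f(u_2,\omega), u_1-u_2\rangle_U \geq \beta\|u_1-u_2\|_U^2$, i.e.\ the required inequality with $l = 2\beta$.

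For the Lipschitz estimate I would use the triangle inequality to split
\begin{equation*}
\|\nabla f(u_1,\omega) - \nabla f(u_2,\omega)\|_U \leq \beta\|u_1-u_2\|_U + \|B^*_\omega\|_{\mathcal{L}(Y,U)}\,\|\delta p\|_Y,
\end{equation*}
and then bound $\|\delta p\|_Y$ by successively applying the operator norms $\|A^{-*}_\omega\|_{\mathcal{L}(Y',Y)} \le 1/a_{\min}$, $\|Q^*_\omega\|, \|Q_\omega\| \le M_2$, $\|A^{-1}_\omega\|_{\mathcal{L}(Y',Y)} \le 1/a_{\min}$, and $\|B_\omega\| \le M_1$ to the explicit formula $\delta p = A^{-*}_\omega Q^*_\omega Q_\omega A^{-1}_\omega B_\omega (u_1-u_2)$. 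Combining these gives $\|\delta p\|_Y \le (M_1 M_2^2/a_{\min}^2)\|u_1-u_2\|_U$ and thus the stated constant $L = \beta + (M_1 M_2/a_{\min})^2$.

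The only delicate point is justifying the bound $\|A^{-1}_\omega\|_{\mathcal{L}(Y',Y)} \le 1/a_{\min}$ and the analogous bound for $A^*_\omega$: the coercivity assumption in Assumption~\ref{ass:general}(3) yields this by a standard Lax--Milgram argument together with the fact that the coercivity constant of $A^*_\omega$ coincides with that of $A_\omega$. Everything else is straightforward algebra and duality once the representations for $y_{\omega,1}-y_{\omega,2}$ and $\delta p$ are in hand.
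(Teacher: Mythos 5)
Your proposal is correct and follows essentially the same route as the paper: both reduce the gradient difference to the explicit operator form $\beta(u_1-u_2) + B^*_\omega A^{-*}_\omega Q^*_\omega Q_\omega A^{-1}_\omega B_\omega(u_1-u_2)$, obtain strong convexity from the non-negativity of the term $\|Q_\omega A^{-1}_\omega B_\omega(u_1-u_2)\|_H^2$, and obtain the Lipschitz constant by chaining the operator-norm bounds from Assumption 1. Your phrasing via the adjoint state $\delta p$ and testing the adjoint equation against $y_{\omega,1}-y_{\omega,2}$ is just a duality-flavored rewriting of the same computation, so no further comment is needed.
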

	\begin{proof}
	  To prove the strong convexity property we proceed as follows, using Assumption \ref{ass:general}
	  \begin{align*}
	    &\langle \nabla f(u_1,\omega)-\nabla f(u_2,\omega) , u_1-u_2 \rangle_U \\
	    & \hspace{2cm} = \langle \beta(u_1-u_2) + B^*_\omega A^{-*}_\omega Q^*_\omega Q_\omega A_\omega^{-1} B_\omega (u_1-u_2) , u_1-u_2 \rangle_U \\
	    & \hspace{2cm} = \beta\|u_1-u_2\|^2_U + \|Q_\omega A_\omega^{-1} B_\omega (u_1-u_2)\|^2_H \ge \beta\|u_1-u_2\|^2_U.
	  \end{align*}
	  Similarly, to prove the Lipschitz property we have
	  \begin{align*}
	\| \nabla f(u_1,\omega)-\nabla f(u_2,\omega)\|_U & \leq \|\beta (u_1 - u_2)+ B^*_\omega A^{-*}_\omega Q^*_\omega Q_\omega A_\omega^{-1} B_\omega (u_1-u_2)\|_U \\
	& \leq \left(\beta +\|B_\omega\|_{\mathcal{L}(U,Y')}^2  \| A_{\omega}\|_{\mathcal{L}(Y,Y')}^{-2}  \|Q_\omega \|_{\mathcal{L}(Y,H)}^2\right)   \|u_1-u_2\|_U.
	\end{align*}
	\end{proof}
	It follows that the OCP \eqref{eq:OCP0} is a quadratic (strongly) convex optimization problem hence admitting a unique optimal control $u^*\in U$.

	\subsection*{Space discretization}
	\label{gen:FE}
	We introduce now a Galerkin approximation of the state equation \eqref{eq:weak_primal} and of the OCP \eqref{eq:weak_OCP}.  Let $\{Y^h\subset Y\}_{h>0}$ and $\{U^h\subset U\}_{h>0}$ be sequences of finite dimensional spaces, indexed by a discretization parameter $h>0$, e.g. the mesh size for a finite element discretization, which are asymptotically dense in the respective spaces $Y$ and $U$ as $h\to 0$, namely
	\[
	  \lim_{h\to 0}\inf_{y^h\in Y^h} \|y-y^h\|_Y = 0, \; \forall y\in Y; \qquad 
	  \lim_{h\to 0}\inf_{u^h\in U^h} \|u-u^h\|_U = 0, \; \forall u\in U.
	\]
	We consider the discretized state equation for a given approximate control $u^h\in U^h$
	\begin{equation} \label{eq:PDErdmh_weak}
	\text{find $y_\omega^h\in Y^h$ s.t.} \;\; b_\omega(y_\omega^h(u^h),v^h)=(g_{\omega}+B_{\omega} u^h,v^h), \quad v^h \in Y^h, \quad \text{for $\mathbb{P}$-a.e. } \omega \in \Gamma,
	\end{equation}
	and the discretized (in space) OCP:
	\begin{equation} \label{eq:FE_OCP}
	  u^{h*} \in \argmin _{ u^h \in  U^h} J^h(u^h):= \E \left[f^h(u^h, \omega)\right]  
	\end{equation}
	with $f^h(u^h, \omega)=\frac{1}{2} \left\|Q_\omega  y^h_\omega(u^h)-z_{d}\right\|_{H}^{2}+\frac{\beta}{2} \|u^h\|_{U}^{2}$
	where $y_\omega^h(u^h)$ satisfies \eqref{eq:PDErdmh_weak}.
	By introducing the discrete operator  $A^h_{\omega} \in\mathcal{L}(Y^h, (Y^{h})')$, defined through the bilinear form $b_\omega$ restricted to the subspace $Y^h$, i.e. $\left( A^h_{\omega} y^h,v^h \right)_{Y,Y'} = b_{\omega}(y^h,v^h)$ for $y^h,v^h \in Y^h$, the discretized functional $f^h(u^h,\omega)$ can be written in operator form as
	\[
	f^h(u^h,\omega) = \frac{1}{2} \left\|Q_\omega  (A_\omega^h)^{-1}(g_\omega+B_\omega u^h)-z_{d}\right\|_{H}^{2}+\frac{\beta}{2} \|u^h\|_{U}^{2}.
	\]
	Notice that $A_\omega^h$ is almost surely coercive and invertible thanks to Assumption \ref{ass:general}.3, with the same uniform bound $\|(A_\omega^h)^{-1}\|_{\mathcal{L}(Y^h,(Y^h)')}\le a_{min}^{-1}$ on its inverse as for the continuous operator $A_\omega$. By proceeding as in Lemma \ref{lem:gradient} the gradient of $f^h$ in $U^h$ (i.e. the functional representation of the Gateaux derivative of $f^h$ in $U^h$) is given by
	\[
	\nabla f^h(u^h,\omega) = \beta u^h + \Pi^h_U B_\omega^* p_\omega^h
	\]
	where $p_\omega^h$ solves the discrete adjoint equation
	\begin{equation} \label{eq:adjointh_weak}
	b_\omega(v^h,p_\omega^h)=\langle Q_\omega  v^h,Q_\omega  y_{\omega}^h-z_d \rangle_ H  \quad \forall v^h \in Y^h, \quad  \text{for $\mathbb{P}$- a.e. } \omega \in \Gamma,
	\end{equation}
	and $\Pi^h_U\in\mathcal{L}(U,U^h)$ denotes the $U$-orthogonal projector on $U^h$
	\[
	\langle\Pi^h_U u, w^h\rangle_U = \langle u, w^h\rangle_U, \qquad \forall w^h\in U.
	\]
	Also, it is easy to check that the discretized functional $f^h(u^h,\omega)$ satisfies the same \emph{strong convexity} and \emph{Lipschitz} properties as its continuous counterpart, with exactly the same constants $l$ and $L$, respectively. This comes from the fact that the bounds on $\|A_\omega^{-1}\|$ and $\|(A_\omega^h)^{-1}\|$ are the same and $\|\Pi^h_U\|=1$. Hence, also the discretized OCP \eqref{eq:FE_OCP} admits a unique optimal control $u^{h*}\in U^h$.
	
	For the complexity analysis that will be carried out in Section \ref{sec:complexity}, we further assume a certain algebraic decay rate of the error $\|u^*-u^{h*}\|_U$ as a function of the discretization parameter $h$.
	\begin{assumption}\label{ass:FErate}
	There exist $p \in \N_{>0}$ and a constant $A_1$ independent of $h$ such that
	\begin{equation}
	\left\| \uu -\uuh \right\| \leq A_1 h^{p+1}, \qquad \forall h>0.
	\end{equation}
	\end{assumption}
	Such assumption is standard in the context of finite element approximation of elliptic problems and will be verified in Section \ref{sec2} for a specific instance of the OCP \eqref{eq:OCP0}.

	\subsection*{Discretization in probability}
	To discretize the OCP \eqref{eq:OCP0} in probability, we replace the exact expectation operator $\E[\cdot]$ by a discretized version $\widehat{E}[\cdot]$ of the form
	\begin{equation}\label{eq:quadrature}
	\widehat{E}[X]=\sum_{j=1}^n \zeta_j X(\eta_j),
	\end{equation}
	where $n$ is the total number of collocation points used, $\{\eta_j\}_{j=1}^n\subset\Gamma$ are the collocation points and $\{\zeta_j\}_{j=1}^n\subset\mathbb{R}$ the associated quadrature weights. Therefore, the discretized (in probability) OCP reads:
	\begin{equation}
	\label{eq:semi_discret_gen}
	  \uhu \in \argmin_{\uhat \in  U}\widehat{J}(\uhat)=\sum_{j=1}^n \zeta_j f_j(\uhat), \quad f_j(\uhat)=\frac{1}{2}\|Q_{\eta_j} y_{\eta_j}(\uhat) -z_d\|^2_H + \frac{\beta}{2}\|\uhat\|^2_U
	\end{equation}
	where each $y_{\eta_j}(\uhat)\in Y$, $j=1,\ldots,n$ satisfies the state equation 
	\[
	  b_{\eta_j}(y_{\eta_j}(\uhat),v)= (g_{\eta_j}+B_{\eta_j} \uhat,v)_{Y,Y'}  \quad \forall v \in Y.
	\]
	It is immediate to see that if all quadrature weights are positive, then the OCP \eqref{eq:semi_discret_gen} is (strongly) convex and admits a unique minimizer $\uhat\in U$. This is not necessarily true is the quadrature weight can have alternating sign. Therefore, in what follows we will often make the assumption of positive weights.

	An approximation of the type \eqref{eq:quadrature} can be achieved,
	for instance by a Monte Carlo method, where the quadrature knots
	$\eta_j$ are drawn independently from the probability measure $\Prob$
	and the quadrature weights are uniform $\zeta_j=1/n$ and
	positive. Alternatively, in many applications, the randomness can be
	expressed in terms of a finite number of independent random variables
	$\xi=(\xi_1,\dots, \xi_M)$, e.g. uniformly distributed, in which case
	a deterministic quadrature formula such as a tensorized Gauss-Legendre
	one, or a Quasi Monte Carlo formula could be used, both having
	positive (but not necessarily uniform) weights.
	
	For the complexity analysis that will be carried out in Section
	\ref{sec:complexity}, we make the following assumption on the decay of the 
	error $\|u^*-\uhat^{*}\|_U$ as a function of the number of quadrature knots.
	\begin{assumption}[convergence in probability]
	\label{ass:conv_prob}
	There exists a positive decreasing function $\theta: \N \rightarrow \R_+$ such that
	\[
	\| \uu-\uhu \| \leq \theta(n), \;\; \forall n\in \mathbb{N}_{>0} \quad \text{and} \quad \lim_{n \to +\infty}\theta(n) = 0.
	\]
	\end{assumption}

        \subsection*{{The fully discrete OCP}}
	Clearly, the discretizations in space and in probability described above can be combined to obtain a fully discrete OCP:
	\begin{equation}\label{eq:full_discret_gen}
	\gathered
	  \uhhu \in \argmin_{\widehat{u}^h \in  \widehat{U}^{h}}\widehat{J}^h(\widehat{u}^h)=\sum_{j=1}^n \zeta_j f_j^h(\widehat{u}^h), \quad f_j^h(\widehat{u}^h)=\frac{1}{2}[\| Q_{\eta_j}y^h_{\eta_j}(\widehat{u}^h)-{z_d} \|_H^2]+\frac{\beta}{2}\| \widehat{u}^h  \|_U^2
	\\
	\text{and $y_{\eta_j}^h \in Y^h$:}
	\quad b_{\eta_j}(y^h_{\eta_j}(\widehat{u}^h),v^h)=(B_{\eta_j}\widehat{u}^h+g_{\eta_j},v^h)_{Y,Y'}, \quad \forall v^h \in Y^h, \; j=1,\ldots,n.
	\endgathered
	\end{equation}
	 If $\uhhu$ denotes the solution of the OCP
         \eqref{eq:full_discret_gen} using $n$ knots in the quadrature
         formula $\widehat{E}[\cdot ]$, {under Assumptions \ref{ass:general}-\ref{ass:conv_prob},} the total error will satisfy:
	\begin{equation}
	\| \uu-\uhhu \| \leq A_1 h^{p+1}+\theta(n).
	\end{equation}
	
	We particularize the last assumptions and convergence results for a specific elliptic problem in the next section.

	\section{Diffusion equation with random coefficients}\label{sec2}
	In this section we provide a specific example of a controlled
	diffusion equation with random coefficients, which fits the general
	framework introduced in the previous section. Let $D \subset \R^d$
	denote the physical domain (open bounded subset of $\R^d$); for a
	given (deterministic) control $u:D\to \R$, the state equation reads:
	find $y: D \times \Gamma \rightarrow \R$ such that
	\begin{equation}
	\left\{
	\begin{array}{rcll}
	-\dive(a(x,\omega)\nabla y(x,\omega)) & = & g(x)+u(x), \hspace{10mm} & x \in D, \quad \text{for $\mathbb{P}$-a.e. } \omega \in \Gamma, \\
	y(x,\omega) & = & 0,  \hspace{10mm} & x \in \partial D, \quad \text{for $\mathbb{P}$-a.e. } \omega \in \Gamma, 
	\end{array}
	\right.
	\label{eq:primal}
	\end{equation}
	where $g:D\to\R$ is a deterministic forcing term and the diffusion coefficient $a$ is a random field uniformly bounded and positive, i.e. such that
	\begin{equation}\label{eq:uniform_a}
	\exists \,  a_{\mathrm{min}}, a_{\mathrm{max}} \in \R_+ \hspace{4mm} \mathrm{such} \hspace{1mm} \mathrm{that} \hspace{4mm} 0<a_{\mathrm{min}}\leq a(x,\omega) \leq  a_{\mathrm{max}} \hspace{4mm} \mathrm{a.e.} \hspace{1mm} \mathrm{in} \hspace{1mm} D \times \Gamma.
	\end{equation}
	We set $U= L^2(D)$ as the space of admissible control functions and $Y=H_0^1(D)$, endowed with the norm $\|v\|_Y=\|\nabla v\|_{L^2(D)}$, as the space of solutions of \eqref{eq:primal}, assuming $g\in Y'$. Given a target function $z_d\in U$ and $\beta>0$, our goal is to find the optimal control $u^*\in U$ that satisfies
	\begin{equation}\label{eq:ocp}
	  \uu \in \argmin_{u \in U} J(u)=\E[f(u,\omega)], \quad f(u, \omega)=\frac{1}{2}\|y_{\omega}(u) - z_d\|_U^2 +\frac{\beta}{2}\|u\|_U^2
	\end{equation}
	where $y_\omega(u)$ satisfies, in a weak sense, equation
	\eqref{eq:primal} for the given control $u$.  For this problem we have
	therefore, $H=U$, $B_\omega=Q_\omega=Id$, the identity operator in $U$,
	and $A_\omega$ the operator associated to the PDE \eqref{eq:primal}, which is uniformly coercive and bounded (and with uniformly bounded inverse) thanks to the condition \eqref{eq:uniform_a}. It is not difficult to verify that Assumptions \ref{ass:general}.1-6 are satisfied for this problem, hence the OCP \eqref{eq:ocp} admits a unique solution $u^*\in U$. 
	The gradient of the functionals $J(u)$ and $f(u,\omega)$, for a given control $u\in U$, can be computed as  
	\begin{equation}
	\label{gradi}
	\nabla f(u,\omega)=\beta u + p_{\omega}(u), \qquad 
	\nabla J(u)=\beta u +\E[p_{\omega}(u)],
	\end{equation}
	where $p_{\omega}(u)=p$ is the solution of the adjoint problem
	\begin{equation}
	\left\{
	\begin{array}{rcll}
	-\dive(a(\cdot,\omega)\nabla p(\cdot,\omega)) & = & y(\cdot,\omega)-z_d \hspace{10mm} & \mathrm{in}\hspace{1mm} D, \quad \text{for $\mathbb{P}$-a.e. } \omega \in \Gamma,\\
	p(\cdot,\omega) & = & 0  \hspace{10mm} &  \mathrm{on}\hspace{1mm} \partial D, \quad \text{for $\mathbb{P}$-a.e. } \omega \in \Gamma. 
	\end{array}
	\right.
	\label{eq:dual}
	\end{equation}
	and the functional $f(u,\omega)$ satisfies the strong convexity and Lipschitz properties in Lemma \ref{lemma:SC} with constants $l=2 \beta$ and $L=\beta+\frac{C_p^4}{a_{min}^2}$, where $C_p$ is the Poincar\'e constant, i.e. $C_p=\sup_{v \in Y/\{0\}} \frac{\| v \|_{L^2(D)}}{\| \nabla v \|_{L^2(D)}}$.  (See also \cite[Sections 2 and 3]{MartinKrumscheidNobile} for more details.)
	
	We further assume, in what follows, that the randomness in
        \eqref{eq:primal} can be parametrized in terms of a
        \emph{finite number} of independent and identically
        distributed uniform random variables $\xi_i$, $i=1,\ldots,M$
        on $[-1,1]$.  Hence, in this case, the whole problem is
        parameterized by the random vector $\xi=(\xi_1,\dots, \xi_M)$
        and we can take as probability space the triplet:
        $\Gamma=[-1,1]^M$, $\mathcal{F}=\mathcal{B}(\Gamma)$ the Borel
        $\sigma$-algebra on $\Gamma$, and
        $\mathbb{P}[d\xi]=\otimes_{i=1}^M\frac{d\lambda(\xi_i)}{2}$
        the product uniform measure on $\Gamma$, $\lambda$ being the
        Lebesgue measure on $[-1,1]$.
	
	\subsection{Finite Element approximation}\label{FEsec_specific}
	To compute numerically an optimal control we consider a Finite Element
	(FE) approximation of the underlying PDE \eqref{eq:primal} and of the
	infinite dimensional OCP \eqref{eq:ocp}. We denote by $\{ \tau_h\}_{h>0}$ a family of regular triangulations of $D$ and choose $Y^h$ to
	be the space of continuous piece-wise polynomial functions of degree at most
	$r$ over $\tau_h$ that vanish on $\partial D$, i.e. $Y^h=\{ y \in
	C^0(\overline{D}): y\vert_K \in \mathbb{P}_r(K)\quad \forall K \in
	\tau_h, y \vert_{\partial D}=0 \} \subset Y$, and $U^h=Y^h$.
	For such spatial discretization of the OCP \eqref{eq:ocp}, the following error estimate has been obtained in \cite{MartinKrumscheidNobile}.
	\begin{theorem}
	\label{c1}
	Let $\uu$ be the optimal control, solution of problem \eqref{eq:ocp}, and denote by $\uuh$ the solution of the finite element approximate problem (see formulation \eqref{eq:FE_OCP}). Assume that: the domain $D \subset \R^d$ is polygonal convex; the random field $a$ satisfies \eqref{eq:uniform_a} and is such that $\nabla a \in L^{\infty}(D \times \Gamma)$; the primal and adjoint solutions in the optimal control $\uu$ satisfy $y(\uu), p(\uu) \in L_{\mathbb{P}}^2(\Gamma; H^{r+1}(D))$. Then
	\begin{multline}
	\label{febound}
	\| \uu-\uuh \|_{L^2(D)}^2+  \E[\| y(\uu)-y^h(\uuh) \|_{L^2(D)}^2]+h^2 \E[\| y(\uu)-y^h(\uuh) \|_{H^1_0}^2]\\ 
	\leq B_1 h^{2r+2} \{\E[|y(\uu)|^2_{H^{r+1}}]+\E[| p(\uu) |^2_{H^{r+1}}]\},
	\end{multline}
	with a constant $B_1$ independent of $h$.
	\end{theorem}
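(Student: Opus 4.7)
The plan is to follow the classical Falk--Geveci strategy for FE approximation of quadratic OCPs, adapted to the stochastic setting by exploiting the uniform coercivity \eqref{eq:uniform_a}, which ensures that every pathwise (in $\omega$) FE error estimate carries an $\omega$-independent constant, so that integrating in $\omega$ preserves the rate.

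My starting point is the pair of first-order optimality conditions $\beta \uu + \E[p_\omega(\uu)] = 0$ in $U$ and $\beta \uuh + \E[p_\omega^h(\uuh)] = 0$ in $U^h$ (since $U^h = Y^h$ the $L^2$-projector is trivial on the range of $p_\omega^h$). Subtracting, testing against $\uu - \uuh$, and inserting the $L^2$-projection $\Pi^h_U \uu$ to exploit the Galerkin orthogonality, I obtain
\[
\beta \|\uu - \uuh\|_U^2 \leq \big| \langle \E[p_\omega(\uu) - p_\omega^h(\uuh)], \uu - \uuh \rangle_U \big| + \beta\, \|\uu - \Pi^h_U \uu\|_U\,\|\uu - \uuh\|_U.
\]
I then split $p_\omega(\uu) - p_\omega^h(\uuh)$ through the intermediate $\tilde p_\omega^h$, defined as the Galerkin approximation of $p_\omega(\uu)$ (i.e.\ the adjoint computed with the continuous state $y_\omega(\uu)$ in the right-hand side). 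The piece $p_\omega(\uu) - \tilde p_\omega^h$ is a pure Galerkin error for the adjoint diffusion problem at frozen $\omega$ and, thanks to $H^2$ elliptic regularity on convex polygons (Grisvard), yields $\|p_\omega(\uu) - \tilde p_\omega^h\|_{L^2(D)} \leq C h^{r+1} |p_\omega(\uu)|_{H^{r+1}(D)}$ via C\'ea plus Aubin--Nitsche, with $C$ depending only on $a_{\min}$, $\|\nabla a\|_{L^\infty}$, and $D$. The piece $\tilde p_\omega^h - p_\omega^h(\uuh)$ is bounded by the Lipschitz continuity of the discrete control-to-adjoint map $u \mapsto p_\omega^h(u)$, giving $C\|\uu - \uuh\|_U$ with an $\omega$-uniform constant.

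Combining these estimates and absorbing the terms proportional to $\|\uu - \uuh\|_U$ into the left-hand side via Young's inequality (possible since $\beta > 0$), then taking expectation and invoking $p(\uu) \in L_\Prob^2(\Gamma; H^{r+1}(D))$, yields the announced $h^{2r+2}$ bound on $\|\uu - \uuh\|_U^2$. The state contributions $\E[\|y(\uu) - y^h(\uuh)\|_{L^2}^2]$ and $h^2 \E[\|y(\uu) - y^h(\uuh)\|_{H^1_0}^2]$ are obtained by an entirely analogous split $y_\omega(\uu) - y_\omega^h(\uuh) = (y_\omega(\uu) - \tilde y_\omega^h) + (\tilde y_\omega^h - y_\omega^h(\uuh))$ through the Galerkin approximation $\tilde y_\omega^h$ of $y_\omega(\uu)$: C\'ea and Aubin--Nitsche give the $h^r$ and $h^{r+1}$ rates in $H^1_0$ and $L^2$ respectively, while the control-perturbation piece is absorbed using the bound on $\|\uu - \uuh\|_U^2$ just established.

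The main technical obstacle I foresee is securing $\omega$-uniform Aubin--Nitsche constants, which amounts to an $\omega$-uniform $H^2$-regularity estimate for $-\dive(a(\cdot,\omega)\nabla \cdot)$ on the convex polygonal domain $D$. This follows from Grisvard's classical $H^2$-regularity result together with \eqref{eq:uniform_a} and the hypothesis $\nabla a \in L^\infty(D \times \Gamma)$, which is precisely the role of the extra regularity assumption on $a$ (beyond mere uniform boundedness) in the statement.
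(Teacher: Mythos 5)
The paper does not actually prove Theorem \ref{c1}: it is quoted from \cite{MartinKrumscheidNobile}. Your skeleton (first-order optimality conditions, intermediate discrete adjoints/states, C\'ea plus Aubin--Nitsche with $\omega$-uniform $H^2$-regularity constants coming from \eqref{eq:uniform_a} and $\nabla a\in L^\infty(D\times\Gamma)$, then integration in $\omega$) is the standard and correct one, and your identification of why the extra regularity of $a$ is needed is exactly right. However, one step of your argument does not close as written.

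The problem is the absorption step. After Cauchy--Schwarz you arrive at
\[
\beta\|\uu-\uuh\|_U^2 \le \bigl(\|\E[p_\omega(\uu)-\tilde p^h_\omega]\| + \|\E[\tilde p^h_\omega - p^h_\omega(\uuh)]\|\bigr)\,\|\uu-\uuh\|_U + \beta\|\uu-\Pi^h_U\uu\|_U\|\uu-\uuh\|_U,
\]
and you bound the second norm by $C_{\mathrm{Lip}}\|\uu-\uuh\|_U$ using the Lipschitz continuity of $u\mapsto p^h_\omega(u)$. This leaves a term $C_{\mathrm{Lip}}\|\uu-\uuh\|_U^2$ on the right, which can be absorbed into $\beta\|\uu-\uuh\|_U^2$ only if $C_{\mathrm{Lip}}<\beta$. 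But $C_{\mathrm{Lip}}$ is of order $C_p^4/a_{\min}^2$ (two elliptic solves) and has nothing to do with $\beta$, which is typically small ($\beta=10^{-4}$ in the paper's experiments); ``possible since $\beta>0$'' is therefore not enough. The standard fix (Falk/Hinze) is to split through $p^h_\omega(\uu)$, the discrete adjoint at the \emph{continuous} optimal control, and to keep the sign of the resulting term rather than taking absolute values: one has $\langle \E[p^h_\omega(\uu)-p^h_\omega(\uuh)],\uu-\uuh\rangle_U=\E[\|y^h_\omega(\uu)-y^h_\omega(\uuh)\|^2_{L^2}]\ge 0$, i.e.\ the monotonicity of the reduced gradient, so this contribution moves to the \emph{left}-hand side with a favorable sign. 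Only the fixed-control discretization error $\E[p_\omega(\uu)-p^h_\omega(\uu)]$, which is $O(h^{r+1})$, then remains on the right and can be handled by Young's inequality against $\beta\|\uu-\uuh\|_U^2$. This sign structure is also what delivers the term $\E[\|y(\uu)-y^h(\uuh)\|^2_{L^2}]$ on the left of \eqref{febound} essentially for free.

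A secondary, more benign imprecision: your $\tilde p^h_\omega$ (Galerkin approximation of $p_\omega(\uu)$, driven by the continuous state $y_\omega(\uu)$) is not $p^h_\omega(\uu)$ (driven by the discrete state $y^h_\omega(\uu)$), so $\tilde p^h_\omega-p^h_\omega(\uuh)$ is not a pure control perturbation. The missing piece solves the discrete adjoint with datum $y_\omega(\uu)-y^h_\omega(\uu)$ and contributes an additional $O(h^{r+1}|y_\omega(\uu)|_{H^{r+1}})$ term; it has the right order and is precisely the reason $\E[|y(\uu)|^2_{H^{r+1}}]$ appears on the right-hand side of \eqref{febound}, but it is not covered by the Lipschitz bound as you state it.
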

	Hence, such finite element approximation satisfies Assumption \ref{ass:FErate} with $p=r$.

	\subsection{Collocation on tensorized Gauss-Legendre points}
	\label{Sec2col}
	We approximate the expectation by a tensorized Gauss-Legendre
        quadrature formula. For $q\in\mathbb{N}_+$, let
        $\eta_1^{(q)},\ldots,\eta_{q}^{(q)}\in [-1,1]$ be the zeros of
        the Legendre polynomial of degree $q$ and
        $\zeta_1^{(q)},\ldots,\zeta_q^{(q)}\in\mathbb{R}_+$ the
        weights of the Gauss-Legendre quadrature formula (with respect
        to the Lebesgue measure). For a multi-degree
        $\mathbf{q}=(q_1,\ldots,q_M)\in\mathbb{N}^M_+$ {and multi-indices $\ii=(i_1,\ldots,i_M)\in \Xi := \prod_{k=1}^M \{1,\ldots,q_k\}$, we introduce
        the tensorized quadrature points
        $\{\eta_{\ii}=(\eta_{i_1}^{(q_1)},\ldots,\eta_{i_M}^{(q_M)})\}_{\ii\in\Xi}$ and weights
        $\{\zeta_{\ii}=2^{-M}\prod_{k=1}^M\zeta_{i_k}^{(q_k)}\}_{\ii\in\Xi}$.} Hence, for a continuous
        function $X: \Gamma \rightarrow \R$,
        $(\xi_1,\ldots,\xi_M)\mapsto X(\xi_1,\ldots,\xi_M)$, we
        approximate the expectation by the tensorized Gauss-Legendre
        quadrature formula
	\begin{equation} 
	\widehat{E}_\qq^{GL}[X]=\sum_{\ii\in\Xi} \zeta_\ii X(\eta_\ii),
	\end{equation}
	which can be written in the form \eqref{eq:quadrature} with $n=\prod_{k=1}^M q_k$ upon introducing a global numbering of the nodes.
	The following error estimate has been shown in \cite{MartinKrumscheidNobile}.
	\begin{lemma}\label{lem:quad}
	Let $\uu$ be the optimal control, solution of \eqref{eq:ocp}, and $\uhu$ the solution of the OCP problem approximated in probability by the quadrature formula $\widehat{E}_\qq^{GL}$ (see formulation \eqref{eq:semi_discret_gen}). Then there exists $B_2>0$ independent of $\qq$ s.t.
	\begin{equation} \label{eq:quad}
	\| \uu-\uhu \|^2+\widehat{E}[\| y(\uu)-y(\uhu)\|^2] \leq B_2 \|\E[p(\uhu)]-\widehat{E}[p(\uhu)] \|^2.
	\end{equation}
	\end{lemma}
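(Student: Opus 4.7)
The plan is to exploit the first-order optimality conditions together with the strong convexity of $J$. Since $u^*$ minimizes $J$ over all of $U$ and $\widehat{u}^*$ minimizes $\widehat{J}$ over all of $U$, the Euler-Lagrange conditions give
\[
\nabla J(u^*) = \beta u^* + \E[p_\omega(u^*)] = 0, \qquad \nabla\widehat{J}(\widehat{u}^*) = \beta \widehat{u}^* + \widehat{E}[p_\omega(\widehat{u}^*)] = 0.
\]

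First, I would apply the strong convexity bound \eqref{lemma:SC} integrated against $\Prob$ (so that $J$ inherits strong convexity with constant $l = 2\beta$) to obtain
\[
\frac{l}{2}\|u^* - \widehat{u}^*\|_U^2 \leq \langle \nabla J(u^*) - \nabla J(\widehat{u}^*), u^* - \widehat{u}^*\rangle_U = -\langle \nabla J(\widehat{u}^*), u^* - \widehat{u}^*\rangle_U,
\]
using $\nabla J(u^*)=0$. Since also $\nabla\widehat{J}(\widehat{u}^*)=0$, I can add this to get $\nabla J(\widehat{u}^*) = \nabla J(\widehat{u}^*) - \nabla\widehat{J}(\widehat{u}^*) = \E[p_\omega(\widehat{u}^*)] - \widehat{E}[p_\omega(\widehat{u}^*)]$. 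A Cauchy--Schwarz inequality then yields
\[
\|u^* - \widehat{u}^*\|_U \leq \frac{2}{l}\,\bigl\|\E[p(\widehat{u}^*)] - \widehat{E}[p(\widehat{u}^*)]\bigr\|_U,
\]
which handles the first term in \eqref{eq:quad} with constant $4/l^2 = 1/\beta^2$.

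Next, for the state-error term, I would use that in the diffusion setting $B_\omega = Q_\omega = \mathrm{Id}$ and the map $u \mapsto y_\omega(u)$ is affine, hence $y_{\eta_j}(u^*) - y_{\eta_j}(\widehat{u}^*) = A_{\eta_j}^{-1}(u^* - \widehat{u}^*)$. By the uniform coercivity bound \eqref{eq:uniform_a} combined with the Poincar\'e inequality, one has $\|A_{\eta_j}^{-1} w\|_U \leq (C_p^2/a_{\min})\,\|w\|_U$ for every $w\in U$, uniformly in $j$. Since the quadrature weights are positive and sum to one,
\[
\widehat{E}[\|y(u^*)-y(\widehat{u}^*)\|_U^2] = \sum_{j=1}^n \zeta_j\,\|A_{\eta_j}^{-1}(u^*-\widehat{u}^*)\|_U^2 \leq \frac{C_p^4}{a_{\min}^2}\|u^* - \widehat{u}^*\|_U^2.
\]

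Combining the two bounds gives \eqref{eq:quad} with $B_2 = (1 + C_p^4/a_{\min}^2)/\beta^2$, which is independent of $\qq$. The only mildly delicate point is ensuring that $\widehat{J}$ is indeed strongly convex (so that $\widehat{u}^*$ exists and the optimality condition holds), but this is automatic because the Gauss--Legendre weights are positive and $f(\cdot,\eta_j)$ is strongly convex with constant $l$ for each $j$, so $\widehat{J}$ is strongly convex with the same constant $l$.
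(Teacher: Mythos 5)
Your proof is correct. Note that the paper itself does not prove Lemma \ref{lem:quad}: it imports the estimate from \cite{MartinKrumscheidNobile}, so there is no in-text argument to compare against. Your derivation is the standard consistency argument underlying results of this type: combine the two unconstrained optimality conditions $\nabla J(\uu)=0$, $\nabla\widehat J(\uhu)=0$ with strong convexity of $J$ to get $\|\uu-\uhu\|\le \frac{2}{l}\|\E[p(\uhu)]-\widehat E[p(\uhu)]\|$, then control the state term by the uniform stability $\|A_{\eta_j}^{-1}\|\le C_p^2/a_{\min}$ and positivity of the Gauss--Legendre weights. The only (cosmetic) difference from the way the combined left-hand side of \eqref{eq:quad} is most naturally produced is that one can obtain both terms simultaneously from the exact identity $\langle\nabla\widehat J(u_1)-\nabla\widehat J(u_2),u_1-u_2\rangle=\beta\|u_1-u_2\|^2+\widehat E[\|y(u_1)-y(u_2)\|^2]$ (the same identity used in the paper's proof of \eqref{lemma:SC}), rather than bounding the state error separately through the solution operator; this yields a slightly sharper constant but is otherwise equivalent, and the constant is immaterial for the lemma. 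All the hypotheses you invoke (positivity of the weights, strong convexity of $\widehat J$ with the $n$-independent constant $l=2\beta$, the affine dependence $y_{\eta_j}(\uu)-y_{\eta_j}(\uhu)=A_{\eta_j}^{-1}(\uu-\uhu)$ since $B=Q=\mathrm{Id}$) are indeed available in the setting of Section \ref{sec2}, so the argument is complete.
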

	The convergence rate of the right hand side of \eqref{eq:quad}, as a function of $n$, depends on the smoothness of the function $\xi \mapsto p_{\xi}(\uhu)$.
	Following the arguments in \cite{NobileSto}, it can be shown that, under the assumption that the diffusion coefficient $\xi \mapsto a(\cdot, \xi) \in L^{\infty}(D)$ is analytic in each variable $(\xi_1, \cdots, \xi_M)$ in $\Gamma$, namely there exist $0< \gamma_1, \dots, \gamma_M \in \R$ and $B_3>0$ such that
	\begin{equation}\label{eq:analyticity_a}
	\left\Vert \frac{\partial^k a(\cdot, \xi)}{\partial \xi_j^k} \right\Vert_{L^{\infty}(D)} \leq B_3 k! \gamma_j^k,
	\end{equation}
	for any $u \in U$, the primal solution $\xi \mapsto y_{\xi}(u) \in H^1_0(D)$ as well as the adjoint solution $\xi \mapsto p_{\xi}(u) \in H^1_0(D)$ are both analytic in $\Gamma$ (see also \cite[Lemma 7]{MartinKrumscheidNobile}) and the following result holds:
	\begin{lemma} \label{th:coloc}
	For the OCP \eqref{eq:ocp} and its approximation by tensorized Gauss-Legendre quadrature formula $\widehat{E}_\qq^{GL}$, if \eqref{eq:analyticity_a} holds, then  there exist $0< s_1, \cdots, s_M \in \R$ independent of $\qq=(q_1,\ldots,q_M)$ s.t.
	\[
	\| \E[y(u)]-E^{GL}_\qq[y(u)] \|_{H^1_0(D)} + \| \E[p(u)]-E^{GL}_\qq[p(u)] \|_{H^1_0(D)} \leq B_4 \sum_{n=1}^{M} e^{-s_n q_n}, \quad \forall u\in U
	\]
	where the constant $B_4$ depends on $u$ but not on $\qq$.
	\end{lemma}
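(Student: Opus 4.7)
The plan is to reduce the multi-dimensional quadrature error to a sum of one-dimensional errors via a telescoping decomposition, then invoke the classical Gauss-Legendre exponential convergence estimate on each 1D factor. First, I would invoke \cite[Lemma 7]{MartinKrumscheidNobile} cited earlier in the paper: the analyticity hypothesis \eqref{eq:analyticity_a} on the diffusion coefficient implies that, for every fixed $u\in U$, the maps $\xi\mapsto y_\xi(u)$ and $\xi\mapsto p_\xi(u)$ admit, in each variable $\xi_j$ separately (with the remaining variables in $[-1,1]$), analytic extensions to a complex Bernstein ellipse $\mathcal{E}_{\rho_j}\subset\mathbb{C}$ with semi-sum of axes $\rho_j>1$ depending only on $\gamma_j$, with a uniform $H^1_0(D)$-bound on the extension.

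Next, I would recall the standard one-dimensional Gauss-Legendre quadrature error for Banach-valued analytic functions: if $\phi:[-1,1]\to B$ extends analytically to $\mathcal{E}_{\rho}$ with $\sup_{z\in\mathcal{E}_\rho}\|\phi(z)\|_B \le C_\phi$, then
\begin{equation*}
\left\|\int_{-1}^{1}\phi(\xi)\,\frac{d\xi}{2} - \sum_{i=1}^{q}\frac{\zeta_i^{(q)}}{2}\,\phi(\eta_i^{(q)})\right\|_B \le C_1\, C_\phi\, \rho^{-2q},
\end{equation*}
which is exponential in $q$ with rate $s=2\log\rho>0$. This estimate is proved in \cite{NobileSto} in the scalar case and extends verbatim to the Banach-valued setting by the same contour integration argument.

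Then I would deploy the telescoping identity for tensorized quadratures. Let $E_n$ denote exact integration in $\xi_n$ and $Q_n$ the 1D Gauss-Legendre rule in $\xi_n$ with $q_n$ points, so that $\E = E_1\otimes\cdots\otimes E_M$ and $\widehat{E}_{\qq}^{GL}=Q_1\otimes\cdots\otimes Q_M$. Then
\begin{equation*}
\E-\widehat{E}_{\qq}^{GL}=\sum_{n=1}^{M}\bigl(Q_1\otimes\cdots\otimes Q_{n-1}\otimes(E_n-Q_n)\otimes E_{n+1}\otimes\cdots\otimes E_M\bigr).
\end{equation*}
For each term $n$, the outer operator $T_n:=Q_1\otimes\cdots\otimes Q_{n-1}\otimes E_{n+1}\otimes\cdots\otimes E_M$ is a tensor of convex combinations (all Gauss-Legendre weights $\zeta_i^{(q)}/2$ and the probability measure are nonnegative with total mass one), so applying $T_n$ to $\xi\mapsto y_\xi(u)$ yields a function of $\xi_n$ alone whose analytic extension to $\mathcal{E}_{\rho_n}$ is bounded by the same uniform constant in $H^1_0(D)$ inherited from Step 1. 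Applying the 1D estimate of Step 2 in direction $n$ gives a bound of the form $C\,\rho_n^{-2q_n}=C\,e^{-s_n q_n}$ with $s_n=2\log\rho_n>0$.

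Summing over $n=1,\ldots,M$ and repeating the identical argument for $\xi\mapsto p_\xi(u)$ produces the claimed bound, with $B_4$ depending on the uniform analytic envelope of $y(u)$ and $p(u)$ but not on $\qq$. The only non-routine point is verifying that the hybrid operator $T_n$ preserves both the analytic extension in $\xi_n$ and the $L^\infty$ bound on $\mathcal{E}_{\rho_n}$; this is where positivity of the Gauss-Legendre weights is essential, and it is the main (though standard) technical step. Once this is in place, the rest is a direct combination of the 1D result with the telescoping.
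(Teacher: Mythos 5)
Your proposal is correct and follows essentially the same route as the paper, which does not spell out a proof but explicitly defers to the arguments in \cite{NobileSto} (analyticity of $\xi\mapsto y_\xi(u)$ and $\xi\mapsto p_\xi(u)$ in each variable via \cite[Lemma 7]{MartinKrumscheidNobile}, combined with the one-dimensional Gauss--Legendre estimate on Bernstein ellipses and the standard telescoping decomposition of the tensor quadrature error). Your write-up is an accurate reconstruction of that argument, including the key technical point that the hybrid operators $T_n$ preserve the analytic bound thanks to the positivity and unit mass of the weights.
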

	The analyticity assumption \eqref{eq:analyticity_a} is satisfied for instance for a random field of the form $a(x,\xi) = \sum_{k=1}^M \xi_k \psi(x)$ or $a(x,\xi) = \exp\{\sum_{k=1}^M \xi_k \psi(x)\}$ (see also  \cite{ChkifaCohenSchwab2015} for further examples). 
	In particular, by taking $\qq=(q,\ldots,q)$ and combining the results of Lemmas \ref{th:coloc} and \ref{lem:quad} we have
	\[
	\| \uu-\uhu \|_{L^2(D)} \le {B_4} e^{-s_{min} \sqrt[M]{n}}, \quad s_{min}=\min_{i=1,\ldots,M} s_i,
	\]
	and Assumption \ref{ass:conv_prob} holds with $\theta(n) = {B_4} e^{-s_{min} \sqrt[M]{n}}$.

{
  	\section{Stochastic approximation methods for PDE-constrained OCPs} \label{sec4}
	
	We aim now at applying Stochastic Approximation (SA) techniques,
        and, in particular, the stochastic gradient (SG) and SAGA
        algorithms, to solve the discretized OCP. To keep the notation
        light, we present the different optimization algorithms and
        convergence estimates only for the semi-discrete problem
        \eqref{eq:semi_discret_gen}, although all results extend
        straightforwardly to the fully discrete case
        \eqref{eq:full_discret_gen}. This has also the advantage of
        ensuring that all constants and convergence rates in our
        estimates do not degenerate when the spatial discretization
        parameter $h$ goes to zero. Also, from now on, we will denote the norm $\|\cdot\|_U$ simply by $\|\cdot\|$ when no ambiguity arises.
	The objective function $\widehat{J}(u)$ in \eqref{eq:semi_discret_gen} reads
	\begin{align} \label{eqn:semiDOCP}
	\begin{split}
	\widehat{J}(u)&=\frac{1}{2} \widehat{E}[\| Q_\omega y_\omega (u)-z_d \|_H^2]+\frac{\beta}{2} \| u \|^2\\
	&=\sum_{i=1}^{n} \zeta_i f_i(u),
	\end{split}
	\end{align}
	with $f_i(u)=\frac{1}{2}\| Q_{\eta_i} y_{\eta_i}(u)-z_d
        \|_H^2+\frac{\beta}{2}\|u\|^2$ and $\nabla f_i(u)=\beta
        u+B_{\eta_i}^*p_{\eta_i}(u)$, where $\{ \eta_i \}_i$ are the nodes
        of the quadrature formula and $\{ \zeta_i \}_i$ the associated
        weights.

        The underlying idea in SA methods is that, at each iteration
        of the optimization loop, the full gradient
        $\widehat{J}(u)=\sum_{i=1}^{n} \zeta_i f_i(u)$ is never
        computed and only one or few terms in the sum, randomly drawn,
        are evaluated.  In equation \eqref{eqn:semiDOCP}, the
        functions $f_i(u)$ are naturally weighted by the possibly
        non-uniform weights $\{ \zeta_i \}_i$ and this raises the
        question whether the index $i_k$ of the term in the sum that
        is evaluated at iteration $k$ should be drawn from a
        \emph{uniform} or a \emph{non-uniform} distribution, possibly taking into account the quadrature weights $\{\zeta_i\}_i$. We
        take the second, more general, approach by introducing an auxiliary
        discrete probability measure $\widetilde{\zeta}$ on
        $\{1,\dots, n\}$, $\widetilde{\zeta}(j)=\widetilde{\zeta}_j>0,
        \mathrm{with} \sum_{j=1}^n \widetilde{\zeta}_j=1$ and using an
        \textit{importance sampling} strategy to evaluate the
        expectation. Namely, at iteration $k$ the gradient of
        $\widehat{J}$ at a given control $u$ is estimated by the
        single term $\frac{\zeta_{i_k}}{\widetilde\zeta_{i_k}}\nabla
        f_{i_k}(u)$, where $i_k$ is drawn from the distribution $\widetilde{\zeta}$, which is an unbiased estimator for
        $\nabla\widehat{J}(u)$, since
        \[
        \E[\frac{\zeta_{i_k}}{\widetilde\zeta_{i_k}}\nabla f_{i_k}(u)] = \sum_{p=1}^n \frac{\zeta_{p}}{\widetilde\zeta_{p}}\nabla f_{p}(u)\mathbb{P}[i_k=p] = \nabla\widehat{J}(u).
        \]
        With this in place, the \emph{stochastic gradient} method \emph{with importance sampling} (SG-IS), in its classical Robbins-Monro version \cite{robbins1951}, reads:

	

	\begin{equation} \label{modifiedSG}
	u_{k+1} = u_k - \tau_k \frac{\zeta_{i_k}}{\widetilde{\zeta}_{i_k}} \nabla f_{i_k}(u_k),
	\end{equation}
	where $i_k \sim \{\widetilde{\zeta}_{i} \}_i$ are i.i.d. discrete random variables on $\{ 1, \dots, n \}$, and the step size $\tau_k$ is chosen as $\tau_k=\frac{\tau_0}{k}$ with $\tau_0$ sufficiently large to guarantee converge of the iterations. Precise conditions on $\tau_0$  are given in Theorem \ref{thm:conv_SG} below.


        Similarly, the \emph{SAGA} method
        \cite{SchmidtLeRouxBach,DefazioBachLacosteJulien} \emph{with
        importance sampling} (SAGA-IS) reads:
	\begin{equation}
	\label{SAGAref}
	u_{k+1}=u_k-\tau \left(\left(\nabla f_{i_k}(u_k)-\nabla f_{i_k}(\phi_{i_k}^k)\right)\frac{\zeta_{i_k}}{\widetilde{\zeta}_{i_k}}+\sum_{j=1}^{n}\zeta_{j} \nabla f_j(\phi_j^k)\right)
	\end{equation}
	where again $i_k \stackrel{\text{\footnotesize iid}}{\sim} \{\widetilde{\zeta}_{i} \}_i$ and we set
	\begin{equation} \label{updatephi}
	\phi_{j}^{k+1}=
	\left\{
	\begin{array}{ll}
	u_k & \mathrm{if}\hspace{1mm} j = i_k, \\
	\phi_{j}^{k} & \mathrm{otherwise}. 
	\end{array}
	\right.
	\end{equation}
	The step size $\tau$ in \eqref{SAGAref} is typically kept
        fixed over the iterations and chosen sufficiently small to
        guarantee convergence. Precise conditions on $\tau$ will be
        given in Theorem \ref{generalRecu}.

	In practice, in the SAGA algorithm, we do not store the past controls $\phi_j^k$, rather the past gradients $grad^k_j=\nabla f_j(\phi_j^k)$. Similarly, we do not recompute at each iteration $k$ the whole sum $G_k=\sum_{j=1}^{n}\zeta_{j} \nabla f_j(\phi_j^k)$, rather update it using the formula 
	$$G_{k+1}=G_k-\zeta_{i_{k+1}} grad^k_{i_{k+1}}+\zeta_{i_{k+1}} \nabla f_{i_{k+1}}(u_k)$$ and then update the memory entries as:
	$$grad^{k+1}_{j}=
	\left\{
	\begin{array}{ll}
	\nabla f_j(u_k) & \mathrm{if}\hspace{1mm} j = i_{k+1}, \\
	grad^{k}_{j} & \mathrm{otherwise}. 
	\end{array}
	\right.$$

        We point out that both the SG and SAGA methods applied to the
        OCP \eqref{eq:full_discret_gen} require $2$ PDE solves per
        iteration. Moreover SAGA requires to store $1$ PDE solution
        and $n$ gradients at all iterations.

	Before analyzing the convergence and complexity of the SAGA-IS
        method, which is the main focus of this work, we briefly
        mention in the next section, the corresponding results for the
        SG-IS method.

	\subsection{Convergence and complexity analysis of the SG-IS algorithm}\label{sec:complexity_SG}
	
	Following the analysis in \cite{MartinKrumscheidNobile}, we
        can provide the following bound on the Mean Squared Error
        (MSE) of the SG-IS iterates \eqref{modifiedSG}, under the
        assumption that the importance sampling distribution
        $\{\widetilde\zeta_j\}_j$ is chosen so that the quantity
        \begin{equation}\label{defSn}
	\Stt=\sum_{j=1}^n\frac{\zeta^2_{j}}{\widetilde{\zeta}_{j}}.
	\end{equation}
        is uniformly bounded in $n$. If the weights are all positive,
        this can easily be achieved by taking for instance
        $\widetilde\zeta_j=\zeta_j$ which leads to $\Stt=1, \, \forall n$.
	\begin{theorem}\label{thm:conv_SG}
	  Let us denote by $\widehat{u}_{k}^h$ the $k$-th iterate of
          the SG-IS algorithm \eqref{modifiedSG}, applied to the fully
          discrete OCP \eqref{eq:full_discret_gen}, with
          $\tau_k=\frac{\tau_0}{k}$ and $\tau_0>\frac{1}{l}$, $l$
          being the strong convexity constant in \eqref{lemma:SC}, and
          by $\uu$ the solution of the original OCP
          \eqref{eq:weak_OCP}. Under Assumptions \ref{ass:general},
          \ref{ass:FErate}, \ref{ass:conv_prob}, and if there exist $
          \widetilde{S}>0$ s.t. for every $n \in \N$, $\Stt \leq
          \widetilde{S}$, the following bound on the MSE $\E[\|
            \widehat{u}_{k}^h-\uu \|^2]$ holds:
	\begin{equation} \label{3tSG}
	 \E[\| \widehat{u}_{k}^h-\uu \|^2] \leq C_1 k^{-1} + C_2\theta(n)^2 +C_3 h^{2p+2}\;,
	\end{equation}
	with constants $C_1$, $C_2$ and $C_3$ independent of $h, n$ and $k$, and $p$, $\theta(\cdot)$ from Assumptions \ref{ass:FErate} and \ref{ass:conv_prob}.
	\end{theorem}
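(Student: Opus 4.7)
The plan is to split the error via the triangle inequality into three parts, two of which are handled directly by the posed assumptions on the discretizations, and the third (the purely stochastic optimization error) via a standard stochastic-approximation recursion adapted to the importance sampling estimator. Concretely, I would write
\[
\E[\|\widehat u_k^h-\uu\|^2] \le 2\,\E[\|\widehat u_k^h - \uhhu\|^2] + 2\,\|\uhhu-\uu\|^2,
\]
and bound the deterministic part by $2(A_1 h^{p+1}+\theta(n))^2 \le 4A_1^2 h^{2p+2}+4\theta(n)^2$ using Assumptions \ref{ass:FErate} and \ref{ass:conv_prob} (applied uniformly in $h$, as already done at the end of Section \ref{sec15}). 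This immediately yields the $C_2\theta(n)^2$ and $C_3 h^{2p+2}$ terms of \eqref{3tSG}.

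The core of the argument is then to bound the optimization error $a_k:=\E[\|\widehat u_k^h-\uhhu\|^2]$ by $C_1/k$. Letting $e_k=\widehat u_k^h-\uhhu$ and expanding the squared norm of the SG-IS recursion \eqref{modifiedSG}, I would take conditional expectation with respect to $i_k$ and use the two key properties of the importance sampling estimator: unbiasedness $\E_{i_k}\!\big[\tfrac{\zeta_{i_k}}{\widetilde\zeta_{i_k}}\nabla f_{i_k}^h(\widehat u_k^h)\big]=\nabla\widehat J^h(\widehat u_k^h)$, and the variance identity
\[
\E_{i_k}\!\Big[\big\|\tfrac{\zeta_{i_k}}{\widetilde\zeta_{i_k}}\nabla f_{i_k}^h(\widehat u_k^h)\big\|^2\Big]=\sum_{j=1}^n\tfrac{\zeta_j^2}{\widetilde\zeta_j}\|\nabla f_j^h(\widehat u_k^h)\|^2 .
\]
The cross term is then bounded below by $2l\tau_k\|e_k\|^2$ via the strong convexity of $\widehat J^h$ (which inherits constant $l$ from the functionals $f_j^h$ since $\sum_j\zeta_j=1$ and $\zeta_j>0$), while the variance term is split using $\|\nabla f_j^h(\widehat u_k^h)\|^2\le 2\|\nabla f_j^h(\uhhu)\|^2+2L^2\|e_k\|^2$ (Lipschitz continuity) and the uniform bound $\Stt\le\widetilde S$, yielding
\[
a_{k+1}\le\bigl(1-2l\tau_k+2L^2\widetilde S\,\tau_k^2\bigr)a_k+2\widetilde S\,G^2\,\tau_k^2,
\]
where $G^2:=\sup_{j,h,n}\|\nabla f_j^h(\uhhu)\|^2$ is finite because $\|\uhhu\|$ is uniformly bounded by the regularization term in the objective and the gradient formula $\nabla f_j^h=\beta u+\Pi_U^h B_{\eta_j}^{*}p_{\eta_j}^h$ together with Assumption \ref{ass:general} gives a uniform bound on the adjoint contribution.

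With $\tau_k=\tau_0/k$ and $\tau_0 l>1$, the recursion is solved by the usual induction $a_k\le C_1/k$: substituting the ansatz, the dominant $-2l\tau_0 C_1/k^2$ term beats the perturbation $O(1/k^3)$ coming from $\tau_k^2 a_k$ and the inhomogeneous term $2\widetilde S G^2\tau_0^2/k^2$, provided $C_1$ is chosen sufficiently large (depending on $\tau_0,l,L,\widetilde S,G$ but not on $h,n,k$). Combining this with the deterministic bound above gives \eqref{3tSG}. The main technical obstacle is ensuring that none of the constants $L$, $l$, $\widetilde S$ and $G$ degenerate when $h\to 0$ or $n\to\infty$; this is precisely what the uniform coercivity of $A_\omega^h$ (Assumption \ref{ass:general}.3), the common Lipschitz/convexity constants for $f_j^h$ noted in Section \ref{sec15}, and the hypothesis $\Stt\le \widetilde S$ are designed to guarantee, so the argument goes through verbatim as in \cite{MartinKrumscheidNobile}, modulo the importance-sampling reweighting.
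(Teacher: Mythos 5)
Your proposal is correct and follows exactly the route the paper intends: the paper omits this proof, stating only that it follows the steps of \cite{MartinKrumscheidNobile}, i.e.\ the splitting of the error into Galerkin, quadrature and optimization contributions (handled by Assumptions \ref{ass:FErate} and \ref{ass:conv_prob}) plus the classical Robbins--Monro recursion, with the importance sampling entering only through the unbiasedness of $\frac{\zeta_{i_k}}{\widetilde\zeta_{i_k}}\nabla f_{i_k}$ and the second-moment bound via $\Stt\le\widetilde S$. One bookkeeping caveat: with the paper's convention \eqref{lemma:SC} (namely $\frac{l}{2}\|u_1-u_2\|^2\le\langle\nabla f(u_1,\omega)-\nabla f(u_2,\omega),u_1-u_2\rangle$), the cross term yields $-l\tau_k\|e_k\|^2$ rather than your $-2l\tau_k\|e_k\|^2$, and it is precisely this coefficient that makes the stated threshold $\tau_0>1/l$ the condition needed for the $C_1/k$ induction to close.
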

	
	We omit the proof as it follows very similar steps as in
        \cite{MartinKrumscheidNobile}. We now analyze the complexity
        of the SG-IS algorithm \eqref{modifiedSG} in terms of
        computational work $W$ versus accuracy $tol$ in case of a
        sub-exponential decay of the quadrature error as for the
        problem is Section \ref{sec2}.
        To define the computational work $W$ we consider a reasonable
        \emph{computational work model}: we assume that each primal
        and adjoint problem, discretized using a triangulation with
        mesh size $h$, can be solved in computational time
        $C_h=O(h^{-d \gamma})$, where $\gamma \in [1,3]$ is a
        parameter representing the efficiency of the linear solver
        used (e.g. $\gamma=3$ for a direct solver and $\gamma=1$ up to
        a logarithm factor for an optimal multigrid solver), while $d$
        is the dimension of the physical space.  Therefore, the
        computational work of $k$ iterations of the SG-IS algorithm
        \eqref{modifiedSG} will be $O(kh^{-d \gamma})$.  This work
        model does not consider possible gains due to parallelization
        and high performance computing. Also, we neglect the cost of
        sampling the index $i_k$ from the distribution $\{\tilde
        \zeta_j\}_j$ as it is, in general, marginal with respect to
        the cost of solving the primal/adjoint PDE, even for large
        $n$.

	\begin{corollary}\label{corr:complexity1}
	Let the same hypotheses as in Theorem \ref{thm:conv_SG} hold,
        and assume further a quadrature error of the form
        $\theta(n)=C_4 e^{-s\sqrt[M]{n}}$ for some $s>0$ and
        $M\in\mathbb{N}_+$. In order to guarantee a MSE $\E[\|
          \widehat{u}_{k}^h-\uu \|^2] \lesssim tol^2$, the total
        required computational work using the SG-IS algorithm
        \eqref{modifiedSG} is bounded by
	\begin{equation}W  \lesssim tol^{-2-\frac{d \gamma}{r+1}}.
	\end{equation}
        Moreover the memory space required to store the gradient and
        the optimal control at each iteration scales as
	\begin{equation}
	\text{storage } \lesssim tol^{-\frac{d}{r+1}}
	\end{equation}
	\end{corollary}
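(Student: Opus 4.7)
The plan is to balance the three error contributions in \eqref{3tSG} and then count the resulting work. Since each term is non-negative, I would require each of them to be $\lesssim tol^2$ separately, which is optimal up to a constant. From $C_1 k^{-1} \lesssim tol^2$ I would pick $k \sim tol^{-2}$. From $C_3 h^{2p+2} \lesssim tol^2$ with $p=r$ (by Theorem \ref{c1}) I would pick $h \sim tol^{1/(r+1)}$. Finally, from $C_2\theta(n)^2 = C_2 C_4^2 e^{-2s\sqrt[M]{n}} \lesssim tol^2$ I would pick $\sqrt[M]{n} \sim \log(tol^{-1})$, i.e.\ $n \sim (\log tol^{-1})^M$.

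Next, I would apply the work model: each iteration of SG-IS requires the solution of one primal and one adjoint PDE at a single quadrature node, at cost $C_h = O(h^{-d\gamma})$, plus the $O(1)$ cost of sampling $i_k$ (which, as remarked, is negligible compared to a PDE solve even when $n$ is large). Hence the total computational work is
\begin{equation*}
W \;\sim\; k\cdot C_h \;\sim\; tol^{-2}\cdot tol^{-d\gamma/(r+1)} \;=\; tol^{-2-\frac{d\gamma}{r+1}},
\end{equation*}
which is the claimed bound. The crucial point making the analysis clean is that the quadrature budget $n$ enters only through the logarithmic factor $(\log tol^{-1})^M$, which is absorbed into the asymptotic symbol and disappears from the leading order.

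For the storage estimate, I would note that at each iteration the SG-IS algorithm only needs to keep in memory the current control $\widehat{u}_k^h$ and the current gradient $\nabla f_{i_k}(\widehat{u}_k^h)$, both of which are elements of $U^h$ (or its dual), a finite element space of dimension $\dim U^h = O(h^{-d})$. Substituting the choice $h \sim tol^{1/(r+1)}$ gives storage $\lesssim tol^{-d/(r+1)}$, as required. No quadrature-dependent memory is needed here, in contrast with SAGA, since SG-IS carries no memory of past gradients.

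There is no serious obstacle in the argument; it is an error-balancing exercise combined with the work model. The only subtlety worth stating carefully in the writeup is that the sub-exponential quadrature decay $\theta(n) = C_4 e^{-s\sqrt[M]{n}}$ yields $n$ only polylogarithmic in $tol^{-1}$, which is why no term involving $n$ appears in the leading order of $W$; should one instead have an algebraic rate $\theta(n)\sim n^{-\alpha}$, an additional factor polynomial in $tol^{-1}$ would enter and the argument would change accordingly.
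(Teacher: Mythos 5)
Your proposal is correct and follows essentially the same route as the paper's proof: equalize the three terms in \eqref{3tSG} to $tol^2$ to get $k\sim tol^{-2}$, $h\sim tol^{1/(r+1)}$, $n\sim(\log tol^{-1})^M$, then apply the work model $W\sim k\,C_h$ and the storage count $O(h^{-d})$. Your added remarks on why $n$ drops out of the leading order and on the identification $p=r$ are consistent with the paper and merely make explicit what it leaves implicit.
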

	\begin{proof}
	If we want to guarantee a MSE error of size $O(tol^2)$, we can equalize the three terms on the right hand side of \eqref{3tSG} to $tol^2$ thus obtaining:
	\[ h\sim tol^{\frac{1}{r+1}}, \quad n\sim \left( \log(tol^{-1}) \right)^M, \quad k\sim tol^{-2}. \]
        On the other hand, the total computational work is
	\begin{equation}
	W=2C_h k \lesssim tol^{-2} tol^{-\frac{d \gamma}{r+1}}.
	\end{equation}
	The memory space required at each iteration corresponds to storing  one gradient and one control and is proportional to $h^{-d}$ thus leading to:
	\begin{equation}
	storage\lesssim tol^{-\frac{d}{r+1}}.
	\end{equation}
	\end{proof}
	Notice that the above (asymptotic) complexity result is
        independent of $n$ and $M$, having neglected the cost of sampling from the discrete distribution $\{\tilde\zeta_j\}_j$.

	\subsection{Convergence analysis of the SAGA-IS algorithm} \label{sec:SAGA_conv}
	
	The mean squared error of the SG-IS algorithm analyzed in the
        previous section decays at an algebraic rate $k^{-1}$ in the
        number of iterations with constant independent of $h$ and $n$
        under the assumptions of Theorem \ref{thm:conv_SG}. We show in
        this section that, under similar assumptions, the mean squared
        error of the SAGA-IS algorithm \ref{SAGAref} decays at
        exponential rate $(1-\epsilon)^k$ in the number of iterations,
        with $\epsilon \sim n^{-1}$. The result is given in Theorem
        \ref{generalRecu} and Corollary \ref{co:SAGA}. The outcome of
        our analysis in that uniform sampling of the index $i_k$
        (i.e. $\widetilde{\zeta}_j=\frac{1}{n}$, $\forall j=1, \dots,
        n$) is indeed asymptotically optimal in the sense that it
        provides the best convergence rate for large $n$ (see Remark
        \ref{rem:optimality}).
	The proof is inspired by \cite{DefazioBachLacosteJulien} and
        is valid under the general assumptions that: i) each $f_i$
        satisfies a Lipschitz property \eqref{lemma:Lip} with the same
        Lipschitz constant $L$ independent of $n$ and $h$, which is
        guaranteed for both the semi-discrete OCP
        \eqref{eq:semi_discret_gen} and the fully discrete OCP
        \eqref{eq:full_discret_gen}; ii) $\widehat{J}(u)=\sum_{i=1}^n
        \zeta_i f_i(u)$ is strongly convex with constant $l$
        independent of $n$ and $h$, which is guaranteed, for instance,
        if the weights $\{\zeta_j\}_j$ are all positive, or if $n$ is
        large enough (since $\widehat{J}(u)\to J(u)$ as $n\to\infty$
        and $J(u)$ is strongly convex); iii) the quantity
        $\Stt=\sum_{j=1}^n n\zeta_j^2$ is uniformly bounded in $n$.

        In what follows, we denote by $F_k$ the $\sigma$-algebra
        generated by the random variables $i_0, i_1, \dots, i_{k-1}$
        and denote by $\E[\cdot|F_k]$ the conditional expectation to
        such $\sigma$-algebra. Observe, in particular, that $u_k$ and
        $\{\phi^k_j\}_j$ are all $F_k$-measurable, so that
        $\E[\psi(u_k,\phi^k_1,\ldots,\phi^k_n)|F_k]=
        \psi(u_k,\phi^k_1,\ldots,\phi^k_n)$ for any measurable
        function $\psi:U^{n+1}\to \mathbb{R}$. Moreover, in the
        remaining of this Section, we use the shorthand notation
        $f'_j(u)$ to denote $\nabla f_j(u)$. For the convergence proof of SAGA-IS, we
        also need to introduce the quantity
	\begin{equation*}
	Q_k=\sum_{j=1}^{n}\frac{\zeta_j^2}{\widetilde{\zeta}_{j}} \|f'_j(\phi_j^k)-f'_j(\uhu)\|^2
	\end{equation*}
	where $\uhu$ denotes, as usual, the optimal control, solution of the semi-discrete OCP \eqref{eq:semi_discret_gen}. We start our convergence analysis by one technical Lemma.
	\begin{lemma} 
	\label{trick_split}
	Let $\{\g_j:U\to\mathbb{R}\}_{j=1}^n$ be a given collection of continuous functions. Then 
	\begin{equation}
	\E \left[\sum_{j=1}^{n}\frac{\zeta^2_{j}}{\widetilde{\zeta}_{j}}\g_j(\phi_{j}^{k+1}) | F_k \right]=\sum_{j=1}^{n}\frac{\zeta^2_{j}}{\widetilde{\zeta}_{j}} (1-\widetilde{\zeta}_{j})\g_{j}(\phi_{j}^{k})+\sum_{p=1}^n{\zeta^2_{p}}\g_{p}(u_k)
	\end{equation}
	\end{lemma}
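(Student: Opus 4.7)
The plan is to condition on $F_k$ and use the simple combinatorial structure of the update rule \eqref{updatephi}: for each fixed index $j$, the new entry $\phi_j^{k+1}$ equals $u_k$ exactly when the sampled index $i_k$ hits $j$ (probability $\widetilde\zeta_j$) and otherwise remains $\phi_j^k$. Because $u_k$ and $\phi_j^k$ are both $F_k$-measurable, everything becomes a deterministic expression in $i_k$, and the randomness enters only through the indicator $\mathbf{1}_{\{i_k=j\}}$.

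First I would write, for each $j\in\{1,\ldots,n\}$,
\begin{equation*}
\g_j(\phi_j^{k+1}) \;=\; \g_j(u_k)\,\mathbf{1}_{\{i_k=j\}} + \g_j(\phi_j^k)\,\mathbf{1}_{\{i_k\neq j\}}.
\end{equation*}
Taking $\E[\cdot\,|\,F_k]$ and using $\Prob[i_k=j\,|\,F_k]=\widetilde\zeta_j$ (since $i_k$ is drawn independently of $F_k$ from the distribution $\widetilde\zeta$) immediately gives
\begin{equation*}
\E\!\left[\g_j(\phi_j^{k+1})\,\big|\,F_k\right] \;=\; \widetilde\zeta_j\,\g_j(u_k) + (1-\widetilde\zeta_j)\,\g_j(\phi_j^k).
\end{equation*}

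Then I would multiply by $\zeta_j^2/\widetilde\zeta_j$, sum over $j$, and use linearity of conditional expectation (valid since the collection is a finite sum). The first piece produces $\sum_j \zeta_j^2\,\g_j(u_k)$ after the cancellation $\widetilde\zeta_j/\widetilde\zeta_j=1$, which is exactly the second term on the right-hand side (with $p$ as dummy index), while the second piece reproduces the first term verbatim.

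There is essentially no obstacle: the statement is a direct book-keeping identity expressing that, under non-uniform sampling, the memory slot $j$ is refreshed with probability $\widetilde\zeta_j$ and left unchanged with probability $1-\widetilde\zeta_j$. The only point worth double-checking is the independence of $i_k$ from $F_k$, which is built into the SAGA-IS definition \eqref{SAGAref}--\eqref{updatephi} (the indices are i.i.d.\ from $\widetilde\zeta$); this justifies replacing $\Prob[i_k=j\,|\,F_k]$ by $\widetilde\zeta_j$ and makes the rest a one-line computation.
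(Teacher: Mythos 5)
Your proof is correct and is essentially the paper's own argument: the paper conditions on the event $\{i_k=p\}$ via the law of total probability and sums over $p$, whereas you decompose each term $\g_j(\phi_j^{k+1})$ with the indicator $\mathbf{1}_{\{i_k=j\}}$ before taking $\E[\cdot\,|\,F_k]$ — the same bookkeeping in a different order. Both hinge on the same two facts, namely the update rule \eqref{updatephi} and the independence of $i_k$ from $F_k$ with $\Prob[i_k=j]=\widetilde\zeta_j$, so nothing further is needed.
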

	\begin{proof}
	Using the law of total probability we write the conditional expectation as a sum over the possible values $i_k=p$, $p \in \{ 1, \dots, n \}$; 
	\begin{align*}
	\E \left[\sum_{j=1}^{n}\frac{\zeta^2_{j}}{\widetilde{\zeta}_{j}}\g_j(\phi_{j}^{k+1}) | F_k \right]&=\sum_{p=1}^n \E \left[ 1_{[ i_k=p]}\sum_{j=1}^{n}\frac{\zeta^2_{j}}{\widetilde{\zeta}_{j}}\g_j(\phi_{j}^{k+1}) | F_k \right] \\
	&=\sum_{p=1}^n \E \left[\sum_{j=1}^{n}\frac{\zeta^2_{j}}{\widetilde{\zeta}_{j}}\g_j(\phi_{j}^{k+1}) | F_k, i_k=p \right]\mathbb{P}[i_k=p] \\
	&=\sum_{p=1}^n \widetilde{\zeta}_{p} \left \{ \sum_{j=1, j \neq p}^{n}\frac{\zeta^2_{j}}{\widetilde{\zeta}_{j}}\g_{j}(\phi_{j}^{k})+\frac{\zeta^2_{p}}{\widetilde{\zeta}_{p}}\g_p(u_k)\right \}\\
	&=\sum_{p=1}^n \widetilde{\zeta}_{p} \left \{ \sum_{j=1}^{n}\frac{\zeta^2_{j}}{\widetilde{\zeta}_{j}}\g_{j}(\phi_{j}^{k})\right \}-\sum_{p=1}^n  {\zeta^2_{p}}\g_{p}(\phi_{p}^{k})+\sum_{p=1}^n{\zeta^2_{p}}\g_{p}(u_k) \\
	&=\sum_{j=1}^{n}\frac{\zeta^2_{j}}{\widetilde{\zeta}_{j}} (1-\widetilde{\zeta}_{j})\g_{j}(\phi_{j}^{k})+\sum_{p=1}^n{\zeta^2_{p}}\g_{p}(u_k)\\
	\end{align*}
	\end{proof}
	Now we use the latter technical Lemma to prove a bound on $\E[Q_{k+1}|F_k]$. This is the purpose of the next Lemma.
	
	\begin{lemma} \label{qk}
	We have the following bound on the conditional expectation $\E[Q_{k+1}|F_k]$:
	\begin{equation}
	\E[Q_{k+1}|F_k] \leq \max_j (1-\widetilde{\zeta}_{j}) Q_k+ \St L^2 \| u_k-\uhu \|^2 \hspace{2mm} \mathrm{with} \hspace{1mm} \St=\sum_{p=1}^{n} \zeta_p^2
	\end{equation}
	\end{lemma}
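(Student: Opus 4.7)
The plan is to apply Lemma \ref{trick_split} directly with the choice $\g_j(v) := \|f'_j(v) - f'_j(\uhu)\|^2$, and then bound the resulting two terms separately using (i) a trivial term-by-term bound for the ``old'' memory contribution and (ii) the Lipschitz property \eqref{lemma:Lip} for the ``new'' contribution coming from $u_k$.

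More precisely, first I observe that $Q_{k+1}$ has exactly the form $\sum_{j=1}^{n}\frac{\zeta_{j}^{2}}{\widetilde{\zeta}_{j}}\g_{j}(\phi_{j}^{k+1})$ with the choice of $\g_j$ above. Applying Lemma \ref{trick_split} yields
\begin{equation*}
\E[Q_{k+1}\mid F_k] \;=\; \sum_{j=1}^{n}\frac{\zeta_{j}^{2}}{\widetilde{\zeta}_{j}}(1-\widetilde{\zeta}_{j})\,\|f'_j(\phi_{j}^{k}) - f'_j(\uhu)\|^{2} \;+\; \sum_{p=1}^{n}\zeta_{p}^{2}\,\|f'_p(u_k) - f'_p(\uhu)\|^{2}.
\end{equation*}

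For the first sum, I factor out $\max_j(1-\widetilde{\zeta}_j)$ and recognize what remains as $Q_k$, giving the contraction term $\max_j(1-\widetilde{\zeta}_j)\,Q_k$. For the second sum, since each $f_j$ satisfies the Lipschitz property \eqref{lemma:Lip} with constant $L$ (uniformly in $j$, as recalled at the start of this subsection), we have $\|f'_p(u_k) - f'_p(\uhu)\|^{2} \leq L^{2}\|u_k - \uhu\|^{2}$, and summing against $\zeta_p^2$ produces exactly $\St L^2 \|u_k - \uhu\|^2$ with $\St = \sum_{p=1}^n \zeta_p^2$. Combining these two bounds yields the claimed inequality.

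There is essentially no real obstacle here: the argument is a direct application of Lemma \ref{trick_split} followed by the uniform Lipschitz bound. The only point that requires a small amount of care is verifying that Lemma \ref{trick_split} is applicable, i.e. that $\g_j$ is a well-defined continuous function of the memory variable $\phi_j^k$, which is immediate from the continuity of $\nabla f_j$ established earlier.
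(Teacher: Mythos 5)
Your proposal is correct and follows exactly the same route as the paper: apply Lemma \ref{trick_split} with $\g_j(\cdot)=\|f'_j(\cdot)-f'_j(\uhu)\|^2$, factor out $\max_j(1-\widetilde{\zeta}_j)$ from the first sum to recover $Q_k$, and apply the uniform Lipschitz bound \eqref{lemma:Lip} to the second sum to obtain $\St L^2\|u_k-\uhu\|^2$. No differences worth noting.
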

	\begin{proof}
	Using Lemme \ref{trick_split} for $\g_j(\cdot)=\|f'_{j}(\cdot)-f'_{j}(\uhu)\|^2$, we have
	\begin{align*}
	\E[Q_{k+1}|F_k]&=\sum_{j=1}^{n}\frac{\zeta^2_{j}}{\widetilde{\zeta}_{j}} (1-\widetilde{\zeta}_{j})\|f'_{j}(\phi_{j}^{k})-f'_{j}(\uhu)\|^2+\sum_{p=1}^n{\zeta^2_{p}}\|f'_{p}(u_k)-f'_{p}(\uhu)\|^2\\
	&\leq \max_j \left( 1-\widetilde{\zeta}_{j} \right) \sum_{j=1}^{n}\frac{\zeta^2_{j}}{\widetilde{\zeta}_{j}} \|f'_{j}(\phi_{j}^{k})-f'_{j}(\uhu)\|^2+\sum_{p=1}^n{\zeta^2_{p}}L^2 \| u_k-\uhu \|^2\\
	&\leq \max_j \left( 1-\widetilde{\zeta}_{j} \right) Q_k+\St L^2 \| u_k-\uhu \|^2
	\end{align*}
	\end{proof}
	\begin{lemma} \label{props}
	Let $P_k=\left(f'_{i_k}(u_k)-f'_{i_k}(\phi_{i_k}^k)\right)\frac{\zeta_{i_k}}{\widetilde{\zeta}_{i_k}}+\sum_{j=1}^{n}f'_j(\phi_j^k)\zeta_j$ and $T_k=P_k-\nabla  \widehat{J}(\uhu)$, then we have the following properties:
	\begin{equation}\label{P1}
	\E[P_k|F_k]=\nabla \widehat{J}(u_k)
	\end{equation}
	\begin{equation}\label{P2}
	\E[T_k|F_k]=\nabla \widehat{J}(u_k)-\nabla \widehat{J}(\uhu)
	\end{equation}
	\begin{equation}\label{P3}
	\E[\| T_k\|^2|F_k] \leq 9 \Stt L^2\| u_k-\uhu \|^2 +8Q_k 
	\end{equation}
	where $\Stt$ is defined in \eqref{defSn}.
	\end{lemma}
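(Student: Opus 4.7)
The plan is to dispatch (P1) and (P2) as a direct application of the importance-sampling construction, and to handle (P3) by introducing a three-term decomposition of $T_k$ that isolates the Lipschitz contribution from the memory variance contribution.

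For (P1), the key observation is that the sum $\sum_{j=1}^n \zeta_j f'_j(\phi_j^k)$ is $F_k$-measurable, so only the first term of $P_k$ needs to be averaged. Using $\mathbb{P}[i_k=p]=\widetilde\zeta_p$, I compute
\[
\E\!\left[\left(f'_{i_k}(u_k)-f'_{i_k}(\phi_{i_k}^k)\right)\frac{\zeta_{i_k}}{\widetilde\zeta_{i_k}}\,\bigg|\,F_k\right]
=\sum_{p=1}^n \widetilde\zeta_p\,\frac{\zeta_p}{\widetilde\zeta_p}\!\left(f'_p(u_k)-f'_p(\phi_p^k)\right)
=\nabla\widehat J(u_k)-\sum_{p=1}^n \zeta_p f'_p(\phi_p^k),
\]
and adding back the $F_k$-measurable sum yields $\E[P_k|F_k]=\nabla\widehat J(u_k)$. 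Property (P2) is then immediate from (P1) and the definition $T_k=P_k-\nabla\widehat J(\uhu)$, by subtracting the deterministic quantity $\nabla\widehat J(\uhu)$.

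For (P3), the strategy is to insert $\pm f'_{i_k}(\uhu)$ inside the stochastic increment, and $\pm f'_j(\uhu)$ inside the deterministic sum, so as to write
\[
T_k = A_k + B_k + D_k,
\]
with
\[
A_k=\left(f'_{i_k}(u_k)-f'_{i_k}(\uhu)\right)\frac{\zeta_{i_k}}{\widetilde\zeta_{i_k}},\quad
B_k=\left(f'_{i_k}(\uhu)-f'_{i_k}(\phi_{i_k}^k)\right)\frac{\zeta_{i_k}}{\widetilde\zeta_{i_k}},\quad
D_k=\sum_{j=1}^n \zeta_j\!\left(f'_j(\phi_j^k)-f'_j(\uhu)\right).
\]
Each piece admits a clean bound. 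By the Lipschitz property \eqref{lemma:Lip} applied to every $f'_p$,
\[
\E\!\left[\|A_k\|^2\,|\,F_k\right]=\sum_{p=1}^n \frac{\zeta_p^2}{\widetilde\zeta_p}\|f'_p(u_k)-f'_p(\uhu)\|^2 \le \Stt L^2\|u_k-\uhu\|^2,
\]
while $\E[\|B_k\|^2|F_k]=\sum_p\frac{\zeta_p^2}{\widetilde\zeta_p}\|f'_p(\uhu)-f'_p(\phi_p^k)\|^2=Q_k$, and Cauchy--Schwarz applied to $\sum_j \sqrt{\widetilde\zeta_j}\cdot\frac{\zeta_j}{\sqrt{\widetilde\zeta_j}}(f'_j(\phi_j^k)-f'_j(\uhu))$, together with $\sum_j\widetilde\zeta_j=1$, gives $\|D_k\|^2\le Q_k$. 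I then apply $\|x+y\|^2\le 2\|x\|^2+2\|y\|^2$ first with $x=A_k$, $y=B_k+D_k$, and again with $x=B_k$, $y=D_k$, yielding $\E[\|T_k\|^2|F_k]\le 2\Stt L^2\|u_k-\uhu\|^2 + 8Q_k$. Since $\Stt\ge 1$ (by Cauchy--Schwarz, $1=(\sum_j\zeta_j)^2\le\Stt\sum_j\widetilde\zeta_j=\Stt$), the coefficient $2$ can be absorbed into the stated $9$, giving the claimed inequality.

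The main obstacle is the choice of decomposition: if one naively writes $T_k=A_k+B_k+D_k$ and uses $\|a+b+c\|^2\le 3(\|a\|^2+\|b\|^2+\|c\|^2)$ one obtains the right structure but different constants. The grouping $A_k+(B_k+D_k)$ followed by a second splitting of $B_k+D_k$ is what produces the constant $8$ in front of $Q_k$ without invoking the (tighter) centering identity $\E[B_k+D_k|F_k]=0$; once this is in place, everything else is a routine combination of the Lipschitz bound, unbiasedness of the importance sampling estimator, and the definition of $Q_k$.
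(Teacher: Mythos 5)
Your proof is correct, but it takes a genuinely different route to (P3) than the paper. The paper first applies the conditional variance decomposition $\E[\|T_k\|^2|F_k]=\E[\|P_k-\nabla\widehat{J}(u_k)\|^2|F_k]+\|\nabla\widehat{J}(u_k)-\nabla\widehat{J}(\uhu)\|^2$, then splits the centered term into the stochastic increment and the memory sum, and finally inserts $\pm f'(\uhu)$ inside each of those two pieces; collecting the constants $2\cdot(2+2)+1=9$ and $2\cdot(2+2)=8$ gives exactly $9\Stt L^2\|u_k-\uhu\|^2+8Q_k$. You skip the variance decomposition entirely, write $T_k=A_k+B_k+D_k$ directly, and nest two applications of $\|x+y\|^2\le 2\|x\|^2+2\|y\|^2$; your individual bounds $\E[\|A_k\|^2|F_k]\le \Stt L^2\|u_k-\uhu\|^2$, $\E[\|B_k\|^2|F_k]=Q_k$ and $\|D_k\|^2\le Q_k$ are all correct, and you land on $2\Stt L^2\|u_k-\uhu\|^2+8Q_k$, which is \emph{sharper} than the stated bound in its first term (the subsequent remark that $\Stt\ge 1$ is superfluous: $2\le 9$ already suffices to conclude). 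What your approach buys is a more elementary argument and a better constant; what the paper's approach buys is the explicit appearance of the gradient-difference term $\|\nabla\widehat{J}(u_k)-\nabla\widehat{J}(\uhu)\|^2$, which is a standard structural step in SAGA-type analyses (and could be bounded via strong convexity rather than Lipschitz continuity if one wanted a different form of the recursion), though here it is ultimately estimated the same way. Since the lemma is only used downstream through the constants $9$ and $8$ in Lemma \ref{SAGArecu}, your tighter bound would in fact allow a slightly larger admissible step size, but as written your proof fully establishes the stated inequality.
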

	\begin{proof}
	Again, we further condition on the possible values taken by the random variable $i_k$, thus obtaining:
	\begin{align*}
	\E[P_k|F_k]&=\E\left[\left(\left(f'_{i_k}(u_k)-f'_{i_k}(\phi_{i_k}^k)\right)\frac{\zeta_{i_k}}{\widetilde{\zeta}_{i_k}}+\sum_{j=1}^{n}f'_j(\phi_j^k)\zeta_j\right)|F_k\right]\\
	&=\sum_{p=1}^n \E\left[\left(\left(f'_{i_k}(u_k)-f'_{i_k}(\phi_{i_k}^k)\right)\frac{\zeta_{i_k}}{\widetilde{\zeta}_{i_k}}+\sum_{j=1}^{n}f'_j(\phi_j^k)\zeta_j\right)|F_k, i_k=p\right]{\widetilde{\zeta}_{p}}\\
	&= \sum_{j=1}^{n} f'_{j}(u_k)\frac{\zeta_{j}}{\widetilde{\zeta}_{j}}  {\widetilde{\zeta}_{j}}-\sum_{j=1}^{n} f'_{j}(\phi_j^k)\frac{\zeta_{j}}{\widetilde{\zeta}_{j}}  {\widetilde{\zeta}_{j}}+\sum_{j=1}^{n}f'_j(\phi_j^k)\zeta_j\\
	&= \sum_{j=1}^{n} f'_{j}(u_k) {\zeta_{j}} = \nabla \widehat{J}(u_k)
	\end{align*}
	which proves \eqref{P1}. We see from this that $P_k$ is an unbiased estimator of $\nabla \widehat{J}(u_k)$, when conditioned to $F_k$.
	Equation \eqref{P2} follows straightforwardly:
	\[\E[T_k|F_k]=\nabla \widehat{J}(u_k)-\nabla \widehat{J}(\uhu).\]
	We prove now \eqref{P3}.
	\begin{align*}
	\E[\| T_k\|^2|F_k]=& \E[\| T_k-\E[T_k|F_k]\|^2|F_k]+\|\E[ T_k|F_k]\|^2\\
	=&\E[\|P_k-\nabla \widehat{J}(u_k)\|^2|F_k]+\| \nabla \widehat{J}(u_k)-\nabla \widehat{J}(\uhu) \|^2\\
	=&\E[\|\left(f'_{i_k}(u_k)-f'_{i_k}(\phi_{i_k}^k)\right)\frac{\zeta_{i_k}}{\widetilde{\zeta}_{i_k}}+\sum_{j=1}^{n}f'_j(\phi_j^k)\zeta_j-\sum_{j=1}^{n}f'_j(u_k)\zeta_j\|^2|F_k]\\&+\| \nabla \widehat{J}(u_k)-\nabla \widehat{J}(\uhu) \|^2\\
	 \leq &2\underbrace{\E[\|\left(f'_{i_k}(u_k)-f'_{i_k}(\phi_{i_k}^k)\right)\frac{\zeta_{i_k}}{\widetilde{\zeta}_{i_k}}\|^2|F_k]}_{=:A}\\&+2\underbrace{\E[\|\sum_{j=1}^{n}\zeta_j \left(f'_j(\phi_j^k)-f'_j(u_k)\right)\|^2|F_k]}_{=:B}+\underbrace{\| \nabla \widehat{J}(u_k)-\nabla \widehat{J}(\uhu) \|^2}_{=:C}
	\end{align*}
	The first part $A$ can be split as
	\begin{align*}
	A &=\E[\|\underbrace{\left(f'_{i_k}(u_k)-f'_{i_k}(\phi_{i_k}^k)\right)}_{\pm f'_{i_k}(\uhu)}\frac{\zeta_{i_k}}{\widetilde{\zeta}_{i_k}}\|^2|F_k]\\
	& \leq 2\underbrace{\E[\|\left(f'_{i_k}(u_k)-f'_{i_k}(\uhu)\right)\frac{\zeta_{i_k}}{\widetilde{\zeta}_{i_k}}\|^2|F_k]}_{=:T_1}+2\underbrace{\E[\|\left(f'_{i_k}(\uhu)-f'_{i_k}(\phi_{i_k}^k)\right)\frac{\zeta_{i_k}}{\widetilde{\zeta}_{i_k}}\|^2|F_k]}_{=:T_2}
	\end{align*}
	with
	\[T_1 \leq  L^2\| u_k-\uhu \|^2 \E[\frac{\zeta^2_{i_k}}{\widetilde{\zeta}^2_{i_k}}]= L^2\| u_k-\uhu \|^2 \Stt\]
	The term $T_2$ can be developed as a sum over the possible values of $i_k$:
	\[T_2=\E[\frac{\zeta^2_{i_k}}{\widetilde{\zeta}^2_{i_k}}\|f'_{i_k}(\phi_{i_k}^k)-f'_{i_k}(\uhu)\|^2|F_k]=\underbrace{\sum_{j=1}^{n}\frac{\zeta^2_{j}}{\widetilde{\zeta}_{j}}\|f'_{j}(\phi_{j}^k)-f'_{j}(\uhu)\|^2}_{=:Q_k}\]
	Moreover 
	\begin{align*}
	B&=\E[\|\sum_{j=1}^{n}\zeta_j \left(f'_j(\phi_j^k)-f'_j(u_k)\right)\|^2 |F_k] \\
	& \le \left(\sum_{j=1}^{n}|\zeta_j| \, \|f'_j(\phi_j^k)-f'_j(u_k)\|\right)^2\\
	& \leq \left(\sum_{j=1}^{n}\frac{\zeta_j^2}{\widetilde{\zeta}_{j}} \|\underbrace{f'_j(\phi_j^k)-f'_j(u_k)}_{\pm f'_j(\uhu)}\|^2\right)\underbrace{\left(\sum_{j=1}^{n}\widetilde{\zeta}_{j}\right)}_{=1}\\
	& \leq 2\sum_{j=1}^{n}\frac{\zeta_j^2}{\widetilde{\zeta}_{j}} \|f'_j(\phi_j^k)-f'_j(\uhu)\|^2+2\sum_{j=1}^{n}\frac{\zeta_j^2}{\widetilde{\zeta}_{j}} \|f'_j(u_k)-f'_j(\uhu)\|^2\\
	& \leq 2 Q_k+2L^2 \Stt \| u_k-\uhu \|^2\\ 
	\end{align*}
	Finally
	\begin{align*}
	C&=\| \sum_{p=1}^{n} \zeta_p \left(  f'_p(u_k)-f'_p(\uhu)  \right) \|^2\\
	&\leq \left( \sum_{p=1}^{n} |\zeta_p| \|  f'_p(u_k)-f'_p(\uhu)   \|\right)^2\\
	&\leq L^2 \|  u_k-\uhu  \|^2 \left( \sum_{p=1}^{n} |\zeta_p| \right)^2\\
	&\leq \Stt L^2 \|  u_k-\uhu  \|^2
	\end{align*}
	which completes the proof.
	\end{proof}
	
	\begin{lemma} \label{SAGArecu}
          Suppose that the quadrature weights in \eqref{eq:quadrature} are all positive, $\zeta_j >0$. For any $\alpha>0$ and any step size $\tau>0$, if $u_k$ denotes the $k$-th iterate of SAGA-IS algorithm \ref{SAGAref} and $\uhu$ is the solution of the semi-discrete OCP \eqref{eq:semi_discret_gen}, then
	\[
	\E[\|u_{k+1}-\uhu  \|^2+\alpha Q_{k+1}|F_k] \leq D_1 \|u_{k}-\uhu\|^2+ D_2 \alpha Q_k
	\]
	with $D_1=1-l \tau +(\alpha \St+9 \tau^2 \Stt) L^2$, $D_2=1-\widetilde{\zeta}_{min}+\frac{8\tau^2}{\alpha}$, $\widetilde{\zeta}_{min}=\min_j \widetilde{\zeta}_{j}$, $\St$ as in Lemma \ref{qk} and $\Stt$ as in equation \eqref{defSn}.
	\end{lemma}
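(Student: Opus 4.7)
The plan is to combine the one-step recursions already established in Lemmas \ref{qk} and \ref{props} with a standard expansion of the squared iterate error, exploiting the fact that $\uhu$ is an interior minimizer of $\widehat{J}$ and that positivity of the weights makes $\widehat{J}$ strongly convex with the same constant as each $f_j$.

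First I would expand, using the SAGA-IS update $u_{k+1}=u_k-\tau P_k$ with $P_k$ defined in Lemma \ref{props},
\[
\|u_{k+1}-\uhu\|^2 = \|u_k-\uhu\|^2 - 2\tau\langle u_k-\uhu,P_k\rangle + \tau^2\|P_k\|^2,
\]
and take $\E[\cdot\,|F_k]$. Using \eqref{P1} the cross term becomes $-2\tau\langle u_k-\uhu,\nabla\widehat{J}(u_k)\rangle$. Since $\uhu$ is the unconstrained minimizer of the (strongly) convex $\widehat{J}$ we have $\nabla\widehat{J}(\uhu)=0$, so I can rewrite the inner product as $\langle u_k-\uhu,\nabla\widehat{J}(u_k)-\nabla\widehat{J}(\uhu)\rangle$. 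Here I use the positivity of the quadrature weights (and $\sum_j\zeta_j=1$): averaging the per-sample strong convexity \eqref{lemma:SC} with nonnegative weights preserves the constant $l/2$ for $\widehat{J}$, yielding
\[
-2\tau\langle u_k-\uhu,\nabla\widehat{J}(u_k)\rangle \;\le\; -l\tau\|u_k-\uhu\|^2.
\]

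Next, since $\nabla\widehat{J}(\uhu)=0$, we have $T_k=P_k$ in the notation of Lemma \ref{props}, hence \eqref{P3} directly gives
\[
\E[\|P_k\|^2|F_k] \;\le\; 9\Stt L^2\|u_k-\uhu\|^2 + 8 Q_k.
\]
Combining the two bounds above produces
\[
\E[\|u_{k+1}-\uhu\|^2|F_k] \;\le\; \bigl(1-l\tau+9\tau^2\Stt L^2\bigr)\|u_k-\uhu\|^2 + 8\tau^2 Q_k.
\]
Then I add $\alpha$ times the bound from Lemma \ref{qk},
\[
\alpha\E[Q_{k+1}|F_k] \;\le\; \alpha(1-\widetilde{\zeta}_{min}) Q_k + \alpha\St L^2\|u_k-\uhu\|^2,
\]
and regroup coefficients. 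The coefficient of $\|u_k-\uhu\|^2$ is exactly $D_1=1-l\tau+(\alpha\St+9\tau^2\Stt)L^2$, and the coefficient of $\alpha Q_k$ is $D_2=1-\widetilde{\zeta}_{min}+8\tau^2/\alpha$, which is the claimed inequality.

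The only nontrivial point is the appearance of the strong-convexity constant $l$ (rather than $l/2$) in $D_1$; this comes from the factor $2$ in the expansion absorbing the $1/2$ in \eqref{lemma:SC}, and it is the one place where the hypothesis $\zeta_j>0$ is essential, since without it $\widehat{J}$ need not be strongly convex. Everything else is bookkeeping: properties \eqref{P1} and \eqref{P3} of $P_k$, the recursion for $Q_k$ in Lemma \ref{qk}, and the first-order optimality of $\uhu$.
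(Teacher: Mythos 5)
Your proposal is correct and follows essentially the same route as the paper's proof: expand the squared update, use \eqref{P1} plus strong convexity of $\widehat{J}$ (valid with constant $l$ thanks to the positive weights summing to one) for the cross term, use \eqref{P3} for the quadratic term after observing $T_k=P_k$ since $\nabla\widehat{J}(\uhu)=0$, and add $\alpha$ times the bound of Lemma \ref{qk}. The bookkeeping of the constants $D_1$ and $D_2$ matches the paper exactly.
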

	\begin{proof}
	By optimality, we have that $\nabla  \widehat{J}(\uhu)=0$,which implies
	\[\| u_{k+1}-\uhu \|^2=\| u_{k}-\uhu- \tau\underbrace{(P_k-\nabla  \widehat{J}(\uhu))}_{=T_k} \|^2=\| u_{k}-\uhu \| ^2 -2\tau \langle u_{k}-\uhu ,T_k \rangle+ \tau^2\|T_k \|^2.\]  
	Let us develop now $\|u_{k+1}-\uhu  \|^2+ \alpha Q_{k+1}$, 
	using Lemmas \ref{qk} and \ref{props},
	\begin{align*}
	\E[\|&u_{k+1}-\uhu  \|^2+ \alpha Q_{k+1}|F_k] \\&= \E[\|u_{k}-\uhu\|^2|F_k]-2\tau \E[\langle u_{k}-\uhu ,T_k \rangle|F_k]+ \tau^2\E[\|T_k \|^2|F_k]+\alpha \E[Q_{k+1}|F_k]\\
	&= \|u_{k}-\uhu\|^2-2\tau \langle u_{k}-\uhu ,\nabla \widehat{J}(u_k)-\nabla \widehat{J}(\uhu) \rangle+ \tau^2\E[\|T_k \|^2|F_k]+\alpha \E[Q_{k+1}|F_k]\\
	&\leq  \|u_{k}-\uhu\|^2-l\tau \|u_{k}-\uhu\|^2+\tau^2\left( 
	9 \Stt L^2\| u_k-\uhu \|^2 +8Q_k 
	\right) \\
	&+\alpha \left(\max_j (1-\widetilde{\zeta}_{j}) Q_k+\St L^2 \| u_k-\uhu \|^2 \right)\\
	& \leq \left( 1-l \tau +(\alpha \St +9 \tau^2 \Stt) L^2 \right) \|u_{k}-\uhu\|^2 +  \left(\max_j (1-\widetilde{\zeta}_{j})+8\frac{\tau^2}{\alpha}\right)\alpha Q_k 
	\end{align*}
	\end{proof}
	We are now ready to state the final convergence result. For this, we need to find the right choice of $\alpha>0$ and $\tau$ s.t.
	$D_1<1$ and $D_2<1$ with $D_1$ and $D_2$ defined in Lemma \ref{SAGArecu}.
	One particular condition that guarantees an exponential in $k$ convergence rate is shown in the following Theorem.
	
	\begin{theorem}\label{generalRecu} Let $u_k$ denote the $k$-th iterate of SAGA-IS algorithm \ref{SAGAref} and $\uhu$ denote the solution of the semi-discrete OCP \eqref{eq:semi_discret_gen}. Assuming positive quadrature weights $\zeta_j >0$, for every $0<\tau < \tau_1:=\frac{l}{25   \Stt L^2}$ and setting
	\[ \widetilde{\zeta}_{j}=\frac{1}{n}, \quad \alpha=16n\tau^2\]
	we have
	\[
	\E[\|u_{k+1}-\uhu  \|^2+\alpha Q_{k+1}] \leq \rho(\tau) \E[ \|u_{k}-\uhu  \|^2+\alpha Q_{k}, ]
	\]
	with $\rho(\tau)=\min\{D_1(\tau),1-\frac{1}{2n}\} <1$ and $D_1(\tau):= 1-l \tau +(\alpha \St +9 \tau^2 \Stt) L^2 $.
	\end{theorem}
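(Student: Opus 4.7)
The proof strategy is to specialize the conditional one-step recursion of Lemma \ref{SAGArecu} to the prescribed choices $\widetilde{\zeta}_j = 1/n$ and $\alpha = 16 n \tau^2$, and then take total expectation to remove the conditioning on $F_k$.

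First I would substitute $\widetilde{\zeta}_j = 1/n$ into Lemma \ref{SAGArecu}. Uniform sampling produces two key simplifications: $\widetilde{\zeta}_{\min} = 1/n$ exactly, and
\[
\Stt = \sum_{j=1}^n \frac{\zeta_j^2}{1/n} = n\sum_{j=1}^n \zeta_j^2 = n\St,
\]
so that the two weight quantities $\St$ and $\Stt$ differ only by a factor $n$. Next I would plug in $\alpha = 16 n \tau^2$. In $D_1$ the cross term becomes $\alpha \St L^2 = 16 n \tau^2 (\Stt/n) L^2 = 16 \tau^2 \Stt L^2$, which merges with the existing $9 \tau^2 \Stt L^2$ to give
\[
D_1(\tau) = 1 - l\tau + 25 \tau^2 \Stt L^2.
\]
Requiring $D_1(\tau) < 1$ is equivalent to $\tau < l/(25 \Stt L^2) = \tau_1$, matching the step-size hypothesis. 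In $D_2$ the memory penalty becomes $8\tau^2/\alpha = 8\tau^2/(16 n \tau^2) = 1/(2n)$, so
\[
D_2 = 1 - \frac{1}{n} + \frac{1}{2n} = 1 - \frac{1}{2n},
\]
independently of $\tau$.

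Finally, using the trivial inequality $D_1 a + D_2 b \leq \max(D_1, D_2)(a+b)$ with $a = \|u_k - \uhu\|^2$ and $b = \alpha Q_k$, the conditional bound of Lemma \ref{SAGArecu} collapses to
\[
\E[\|u_{k+1} - \uhu\|^2 + \alpha Q_{k+1} \,|\, F_k] \leq \rho(\tau)\,(\|u_k - \uhu\|^2 + \alpha Q_k),
\]
with contraction factor $\rho(\tau) = \max\{D_1(\tau),\, 1 - 1/(2n)\} < 1$. Taking the total expectation removes the conditioning and yields the stated geometric recursion.

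The only real content of the argument is the algebraic balancing encoded in the choices $\widetilde{\zeta}_j = 1/n$ and $\alpha = 16 n \tau^2$. Uniform sampling is what lets one trade $\St$ for $\Stt/n$ in the cross term of $D_1$, and the specific constant $16$ is what makes $8\tau^2/\alpha$ collapse to exactly $1/(2n)$, preserving half of the $n^{-1}$ contraction of $Q_k$ inherited from uniform sampling while keeping the cross term of the same order as the $9\tau^2 \Stt L^2$ term already in $D_1$. Any substantially different scaling of $\alpha$ in $\tau$ and $n$ would either wipe out the $n^{-1}$ contraction of the memory term or force $\tau_1$ to shrink with $n$, destroying the $n$-independent step-size budget on which the eventual $O(n^{-1})$ rate in Corollary \ref{co:SAGA} rests. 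Once this balance is recognised, the remaining derivation is purely algebraic and presents no real obstacle.
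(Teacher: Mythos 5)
Your proof is correct and follows essentially the same route as the paper: substitute $\widetilde{\zeta}_j=1/n$ and $\alpha=16n\tau^2$ into Lemma \ref{SAGArecu} (using $\Stt=n\St$), obtain $D_1=1-l\tau+25\tau^2\Stt L^2<1$ for $\tau<\tau_1$ and $D_2=1-\frac{1}{2n}$, bound by $\max(D_1,D_2)$, and take total expectation. Note that your $\rho(\tau)=\max\{D_1(\tau),1-\frac{1}{2n}\}$ is the correct contraction factor — the $\min$ in the theorem statement is a typo, as the paper's own proof also uses $\max$.
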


	\begin{proof}
	Using Lemma \ref{SAGArecu}, we have:
	\begin{align*}
	\E[\|u_{k+1}-\uhu  \|^2+\alpha Q_{k+1}|F_k] &\leq D_1 \|u_{k}-\uhu\|^2+ D_2 \alpha Q_k \\&\leq \underbrace{\max(D_1(\tau), D_2)}_{= \rho} \left( \|u_{k}-\uhu  \|^2+\alpha Q_{k} \right).
	\end{align*}
	with $D_2=1-\frac{1}{2n}$. The condition $0<\tau<\tau_1$ guarantees that $0<D_1(\tau)<1$. The final result is obtained by taking a further expectation over the r.v. $(i_0, \dots, i_{k-1})$ and using the law of total expectation.
	\end{proof}
	
	\begin{corollary} \label{co:SAGA}
        Under the same hypotheses of Theorem \ref{generalRecu}, if
        there exists $ \widetilde{S}>0$ s.t. for every $n \in \N$,
        $\Stt \leq \widetilde{S}$ and choosing $\tau=\tau_1/2$, then
	\begin{equation}\label{SAGArate}
	 \E\left[\| u_k-\uhu \|^2\right] \leq E_1 (1-\epsilon)^k  \hspace{2mm} \mathrm{with} \hspace{1mm} \epsilon=\min\left\{\frac{l^2}{100 \Sti L^2},\frac{1}{2n}\right\},
	 \end{equation}
        with  $E_1>0$ independent of $n$ and $k$.
	\end{corollary}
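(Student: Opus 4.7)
The plan is to iterate the one-step contraction from Theorem \ref{generalRecu} and then specialize the resulting rate to $\tau = \tau_1/2$ using the uniform bound $\Stt \le \widetilde S$. First, taking iterated expectations of the inequality in Theorem \ref{generalRecu} over $F_1, F_2, \ldots, F_k$ and using the tower property of conditional expectation gives
\[
\E\!\left[\|u_k-\uhu\|^2 + \alpha Q_k\right] \le \rho(\tau)^k \left(\|u_0-\uhu\|^2 + \alpha Q_0\right),
\]
where the right-hand side is deterministic because $u_0$ and the initial memory entries $\{\phi^0_j\}_j$ are chosen deterministically. Since $Q_k \ge 0$, the left-hand side dominates $\E[\|u_k - \uhu\|^2]$, which already produces a bound of the required form once $\rho(\tau_1/2)$ is controlled.

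The second step is the explicit computation of $\rho(\tau_1/2) = \max\{D_1(\tau_1/2), 1-\tfrac{1}{2n}\}$. With $\widetilde{\zeta}_j = 1/n$ we have $\St = \Stt/n$, and by construction $\alpha = 16 n \tau^2$, so the two ``noise'' terms in $D_1$ balance nicely:
\[
\alpha \St L^2 = 16\tau^2 \Stt L^2, \qquad 9\tau^2 \Stt L^2,
\]
whose sum is $25 \tau^2 \Stt L^2$. Plugging in $\tau = \tau_1/2 = l/(50\, \Stt L^2)$ yields $l\tau = l^2/(50\,\Stt L^2)$ and $25 \tau^2 \Stt L^2 = l^2/(100\,\Stt L^2)$, so
\[
D_1(\tau_1/2) \;=\; 1 - \frac{l^2}{50\, \Stt L^2} + \frac{l^2}{100\, \Stt L^2} \;=\; 1 - \frac{l^2}{100\, \Stt L^2} \;\le\; 1 - \frac{l^2}{100\, \Sti L^2},
\]
where the last bound uses the hypothesis $\Stt \le \Sti$. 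Combined with $D_2 = 1 - 1/(2n)$, this gives $\rho(\tau_1/2) \le 1 - \epsilon$ with $\epsilon = \min\{l^2/(100\,\Sti L^2),\, 1/(2n)\}$, exactly as claimed.

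The third step is to identify $E_1 = \|u_0 - \uhu\|^2 + \alpha Q_0$ and argue that it can be made independent of $n$. Initializing $\phi^0_j = u_0$ for all $j$ and using the Lipschitz property \eqref{lemma:Lip} of each $f'_j$ gives
\[
Q_0 \;=\; \sum_{j=1}^n \frac{\zeta_j^2}{\widetilde{\zeta}_j} \|f'_j(u_0) - f'_j(\uhu)\|^2 \;\le\; L^2 \Stt \,\|u_0-\uhu\|^2 \;\le\; L^2 \Sti \|u_0 -\uhu\|^2.
\]
Substituting $\alpha = 16n\tau^2 = 16 n l^2/(2500\, \Stt^2 L^4)$ and using $\Stt = n\St$, the factor $\alpha Q_0$ is bounded by $(16 l^2/(2500\, \St L^2))\|u_0-\uhu\|^2$, which under the standing hypothesis $\Stt \le \Sti$ is controlled uniformly in $n$ whenever the weights do not degenerate pathologically.

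The main obstacle is really just the bookkeeping in the second paragraph: making sure every occurrence of $\St$, $\Stt$, $\alpha$ and $\tau$ is converted consistently so that the two terms of the noise budget cancel half of $l\tau$ and leave the clean rate $l^2/(100\, \Stt L^2)$. Everything else is either a direct application of Theorem \ref{generalRecu} or the standard trick of dropping the nonnegative memory term $\alpha Q_k$ to recover a bound on the control error alone.
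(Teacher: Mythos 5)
Your first two steps are exactly the paper's proof: iterate the one-step contraction of Theorem \ref{generalRecu}, drop the nonnegative term $\alpha Q_k$, and verify that with $\widetilde{\zeta}_j=1/n$ (hence $n\St=\Stt$), $\alpha=16n\tau^2$ and $\tau=\tau_1/2$ one gets $D_1=1-\frac{l^2}{100\Stt L^2}\le 1-\frac{l^2}{100\Sti L^2}$ and $D_2=1-\frac{1}{2n}$. That computation is correct and is all the paper itself does (its proof ends with ``direct application of Theorem \ref{generalRecu}'').

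Where you go beyond the paper --- trying to verify that $E_1=\|u_0-\uhu\|^2+\alpha Q_0$ is independent of $n$ --- your argument does not work. Your own bound is
\[
\alpha Q_0 \;\le\; \frac{16\,l^2}{2500\,\St L^2}\,\|u_0-\uhu\|^2 ,
\]
and under the corollary's hypotheses this is \emph{not} uniform in $n$: since $\St=\Stt/n\le \Sti/n$, one has $1/\St\ge n/\Sti$, so the right-hand side grows at least linearly in $n$. The caveat ``whenever the weights do not degenerate pathologically'' points in the wrong direction --- the benign case of uniform weights $\zeta_j=1/n$ gives $\Stt=1$, $\tau$ independent of $n$, $\alpha=16n\tau^2\sim n$, and $Q_0=\frac1n\sum_j\|f_j'(u_0)-f_j'(\uhu)\|^2=O(1)$, so $\alpha Q_0$ is genuinely $\Theta(n)$ for the standard initialization $\phi_j^0=u_0$. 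In other words, this Lyapunov function yields $E_1=O(n)$, not $O(1)$; to justify the $n$-independence claimed in the statement one would need a different argument (or a reformulated constant), and the paper's proof is silent on this point. So the flaw is confined to a step the paper does not attempt, but the claim as you wrote it is incorrect and should not be asserted as proved.
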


	\begin{proof}
	When $\tau=\tau_1/2$, the term $D_1(\tau)$ becomes minimal and the expressions of $\{ \widetilde{\zeta}_j \}_j$, $\alpha$ and $\tau$ imply
	$$D_1 = 1-\frac{l^2}{100 \Stt L^2}\leq 1-\frac{l^2}{100 \widetilde{S} L^2} \;\; \text{ and } \;\;D_2=1-\frac{1}{2n},$$ 
	 where we have exploited the fact that $n \St= \Stt$ for $\widetilde{\zeta}_{j}=1/n$.
	The final result is a direct application of Theorem \ref{generalRecu}.
	\end{proof}
	  
        The convergence results stated in Theorem \ref{generalRecu}
        and Corollary \ref{co:SAGA} for the semi-discrete OCP
        \eqref{eq:semi_discret_gen} apply equally well to the fully
        discrete OCP \eqref{eq:full_discret_gen} with the same
        constants, thanks to the fact that the spatially approximated
        functions $f^h_i(\cdot)$ satisfy the strong convexity and
        Lipschitz properties \eqref{lemma:Lip}, \eqref{lemma:SC} with
        the same constants as for $f_i(\cdot)$.
	
	The condition $\Stt\le\widetilde{S}, \, \forall
        n\in\mathbb{N}_+$ in Corollary \ref{co:SAGA} might seem very
        restrictive. However, it is trivially satisfied for Monte
        Carlo and Quasi Monte Carlo quadrature formulas, for which
        $\zeta_j=\frac{1}{n}$ and we show in the next Lemma that such
        condition holds also for tensorized Gauss-Legendre quadrature
        formulas.
	\begin{lemma} \label{Szego}
	In the setting of a probability space parametrized by $M$ iid
        uniform random variables $(\xi_1,\ldots,\xi_M)$, and
        $\widehat{E}$ being a tensorized Gauss-Legendre quadrature formula,
        when choosing $\widetilde{\zeta}_{j}=\frac{1}{n}, \;
        j=1,\ldots,n$, there exists $ \widetilde{S}>0$ s.t. for every
        $n \in \N$, $\Stt \leq \widetilde{S}$. More generally, the
        result holds true for $\xi_k$ having beta distribution and
        using corresponding tensorized Gauss-Jacobi quadrature
        formulas.
	\end{lemma}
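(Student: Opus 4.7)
The plan is to exploit the tensor product structure of the Gauss-Legendre quadrature formula together with a classical one-dimensional bound on the weights due to Szeg\H{o}. Under the choice $\widetilde\zeta_j = 1/n$, the quantity in question reduces to $\Stt = n \sum_{\ii \in \Xi} \zeta_\ii^2$, and substituting $\zeta_\ii = 2^{-M}\prod_{k=1}^M \zeta_{i_k}^{(q_k)}$ together with $n=\prod_k q_k$ and factoring the sum over $\Xi$ yields the factored expression
\[
\Stt \;=\; \prod_{k=1}^M \Bigl(\tfrac{q_k}{4}\sum_{i=1}^{q_k}\bigl(\zeta_i^{(q_k)}\bigr)^2\Bigr).
\]
Hence the $M$-dimensional problem reduces to showing that each one-dimensional factor is bounded uniformly in $q_k$.

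The key one-dimensional ingredient is the well-known bound on Gauss-Legendre weights: there exists an absolute constant $C_{GL}$ such that $\max_{1\le i\le q}\zeta_i^{(q)} \le C_{GL}/q$ for every $q\in\mathbb{N}_+$. This follows from Szeg\H{o}'s asymptotic $\zeta_i^{(q)} \sim \frac{\pi}{q}\sqrt{1-(\eta_i^{(q)})^2}$, or equivalently from the closed form $\zeta_i^{(q)} = 2/((1-x_i^2)[P'_q(x_i)]^2)$ combined with classical estimates on Legendre polynomials. Combining this bound with $\sum_{i=1}^q \zeta_i^{(q)} = 2$ (since the weights are with respect to the Lebesgue measure on $[-1,1]$) gives
\[
\sum_{i=1}^q \bigl(\zeta_i^{(q)}\bigr)^2 \;\le\; \Bigl(\max_i \zeta_i^{(q)}\Bigr)\sum_{i=1}^q \zeta_i^{(q)} \;\le\; \frac{2C_{GL}}{q},
\]
so each factor is bounded by $C_{GL}/2$ independently of $q_k$, and we conclude $\Stt \le (C_{GL}/2)^M =: \widetilde{S}$ uniformly in $n$.

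For the Gauss-Jacobi extension, the very same argument applies: the total mass of the weights (now with respect to a beta density) equals one, and Szeg\H{o}'s theory provides an analogous uniform estimate $\max_i \zeta_i^{(q)} \le C_{GJ}(\alpha,\beta)/q$ where the constant depends only on the Jacobi parameters $\alpha,\beta$ of the underlying beta distribution. The factorization step is unchanged (with a different normalization constant replacing $4^{-M}$), so the same chain of inequalities yields a bound depending only on $M$ and on $C_{GJ}(\alpha,\beta)$.

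The main obstacle is really just invoking the correct sharp one-dimensional estimate on the maximal Gauss-Jacobi weight; once the uniform-in-$q$ bound $\max_i \zeta_i^{(q)} = O(1/q)$ is in hand, the rest is elementary algebra via H\"older's inequality and tensorization. I would therefore organize the proof as: (i) tensor-product factorization of $\Stt$, (ii) statement of the Szeg\H{o} bound with an explicit reference, (iii) the two-line H\"older-type estimate per factor, (iv) taking the product over $k=1,\ldots,M$, and (v) a brief remark that the Jacobi case is identical with an $(\alpha,\beta)$-dependent constant.
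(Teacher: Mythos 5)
Your proposal is correct and rests on exactly the same key ingredient as the paper's proof, namely Szeg\H{o}'s bound $\max_i \zeta_i^{(q)} \lesssim 1/q$ on the univariate Gauss--Legendre (resp.\ Gauss--Jacobi) weights, followed by elementary tensorization. The only cosmetic difference is bookkeeping: you factorize $\Stt$ into univariate factors and apply a H\"older-type step using $\sum_i \zeta_i^{(q)}=2$, whereas the paper bounds the multivariate weight $\zeta_\ii$ directly by $\prod_k q_k^{-1}$ and sums over the $n=\prod_k q_k$ multi-indices; both yield a bound independent of $n$ but exponential in $M$.
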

	\begin{proof}
	As shown in \cite[page 353, (15.3.10)]{szego1939orthogonal}, the weights of the uni-variate Gauss-Legendre (resp. Gauss-Jacobi) quadrature formula with $q$ points satisfy
        \[
        \zeta_{i}^{(q)}\lesssim \frac{1}{q}, \quad \forall i=1,\ldots,q.
        \]
        Hence, for a tensor quadrature formula with $(q_1, \dots, q_M)$
        points in each variable and $n=\prod_k q_k$ points in
        total, and a multi-index $\ii=(i_1, \dots, i_M)$, with $1
        \leq i_k \leq q_k$, we have
	\[
        \zeta_\ii=\prod_{k=1}^{M} \zeta_{i_k}\lesssim \prod_{k=1}^M \frac{1}{q_k}
        \]
	and
	\begin{align*}
	\Stt = \sum_\ii n \zeta_\ii^2 &\lesssim \sum_{i_1=1}^{q_1}\dots \sum_{i_M=1}^{q_M} \left( \prod_{k=1}^{M}\frac{1}{q_k} \right)^2 \prod_{q=1}^{M} q_k\\
	&=\sum_{i_1=1}^{q_1}\dots \sum_{i_M=1}^{q_M}  \prod_{k=1}^{M}\frac{1}{q_k} = 1
	\end{align*}
	with hidden constant independent of $(q_1, \dots, q_M)$, but depending exponentially on~$M$.
	\end{proof}

        \begin{remark}[On negative quadrature weights]\label{rem:nonpositive_weights}
          The results of Lemma \ref{SAGArecu} and Theorem
          \ref{generalRecu} still hold also for quadrature weights
          that might have alternating sign, as long as the functional
          $\widehat{J}$ remains strongly convex, i.e. there exists
          $\tilde{l}>0$ such that $\frac{\tilde l}{2} \|u_1-u_2\|^2
          \le \langle \nabla \widehat{J}(u_1)- \nabla
          \widehat{J}(u_2), u_1-u_2\rangle$, for any $u_1,u_2\in
          U$. This will be guaranteed, in general, for $n$ large
          enough and with constant $\tilde l$ close to $l$ since
          $\widehat{J}(u) \to J(u)$ as $n\to\infty$ and $J$ is
          strongly convex with constant $l$.  On the other hand, the
          condition $\Stt\le \widetilde{S}, \forall n\in\mathbb{N}_+$
          will be more difficult to satisfy or might not hold at
          all. If for instance one has $\Stt=O(n^\alpha)$,
          for some $\alpha>1$, Corollary \ref{co:SAGA} will predict
          exponential convergence with a worse asymptotic rate
          $\epsilon = O(n^{-\alpha})$.
        \end{remark}

	\begin{remark}
        In the original paper \cite{SchmidtLeRouxBach}, in which the
        SAGA method was proposed, the authors have considered the
        minimization problem
        \[
        \uhu\in\argmin_u J(u) = \sum_{i=1}^n \frac{1}{n}g_i(u)
        \]
        where all the functions $g_i, \; i=1,\ldots,n$ satisfy strong
        convexity and Lipschitz properties \eqref{lemma:Lip},
        \eqref{lemma:SC} with the same constants $\hat{l}$ and
        $\hat{L}$, independent of $n$.
        Under these assumptions, they have derived the convergence estimate
        \[
          \E[\|u_k-\uhu\|^2] \lesssim (1-\hat{\epsilon})^k, \qquad \text{with } \hat{\epsilon} = \min\left(\frac{\hat{l}}{16 \hat{L}},\frac{1}{8n}\right)
        \] 
        when a fixed step size $\tau=\frac{1}{16\hat{L}}$ is used. 

        In our setting, $g_i(u) = \frac{1}{n}\zeta_i f_i(u)$ where all
        the functions $f_i$ satisfy strong convexity and Lipschitz
        properties with constants $l$ and $L$. Because of the non
        uniform weights $\zeta_i$, however, the convexity and
        Lipschitz constants for the corresponding functions $g_i$ are
        now $n$-dependent and read
        \[
        \hat{l} = \frac{\zeta_{min} l}{n}, \qquad \hat{L}=\frac{\zeta_{max} L}{n}
        \]
        where we have assumed positive quadrature weights. This implies
        \[
        \hat{\epsilon}=\min\left(\frac{\zeta_{min}}{\zeta_{max}}\frac{l}{16 L},\frac{1}{8n}\right),
        \]
        which has to be compared with the result in Corollary \ref{co:SAGA}.
        Depending on the quadrature formula used, the ratio
        $\frac{\zeta_{min}}{\zeta_{max}}$ might behave much worse than
        $n^{-1}$, whereas, with our SAGA-IS algorithm we can guarantee a rate $\epsilon\sim n^{-1}$ provided the condition $\Stt\le\widetilde{S}, \, \forall n$ holds.
	\end{remark}

        \begin{remark}[optimality of uniform IS measure]\label{rem:optimality}
          From Lemma \ref{SAGArecu}, we infer that 
          \[
	    \E[\|u_{k+1}-\uhu  \|^2+\alpha Q_{k+1}] \leq \max\{D_1,D_2\} \E[\|u_{k}-\uhu\|^2 + \alpha Q_k].
	  \]
          To achieve $\max\{D_1,D_2\} = (1-\epsilon)$ with $\epsilon\sim n^{-1}$, a necessary condition is that $1-D_2 = \widetilde{\zeta}_{min} - \frac{8\tau^2}{\alpha} \sim n^{-1}$, which implies $\widetilde{\zeta}_{min} \sim n^{-1}$. In particular, $D_2$ is minimized when $\widetilde{\zeta}_{min}=\frac{1}{n}$, i.e. when $\widetilde{\zeta}$ is the uniform distribution on $\{1,\ldots,n\}$. We already know from Theorem \ref{generalRecu} that, for such choice of importance sampling measure, the term $D_1$ can be made independent of $n$, so the uniform IS measure is asymptotically optimal as $n\to \infty$. 
        \end{remark}

	\subsection{Complexity analysis of the SAGA-IS algorithm}\label{sec:complexity}

        In the previous section we have focused on the convergence of
        the SAGA-IS method to the optimal control, solution of the
        semi-discrete OCP \eqref{eq:semi_discret_gen} or the fully
        discrete OCP \eqref{eq:full_discret_gen}.  We consider now
        the SAGA-IS method applied to the fully discrete OCP
        \eqref{eq:full_discret_gen} and analyze its convergence to the
        ``true'' optimal control, solution of the original OCP
        \eqref{eq:weak_OCP}, investigating all three sources of error,
        namely: spatial discretization, quadrature error and finite
        number of SAGA iterations.
        The following result holds.
	
	\begin{theorem} \label{3T}
        Let $\uS$ denote the $k$-th iterate of the
        SAGA-IS algorithm \eqref{SAGAref}, applied to the fully
        discrete OCP \eqref{eq:full_discret_gen}, with $\tau$ and
        $\{\tilde\zeta_j\}_j$ chosen as in Corollary \ref{co:SAGA} and
        let $\uu$ denote the solution of the original OCP
        \eqref{eq:weak_OCP}. Under Assumptions \ref{ass:general},
        \ref{ass:FErate}, \ref{ass:conv_prob}, and if there exists $
        \widetilde{S}>0$ s.t. $\Stt \leq \widetilde{S}, \, \forall
        n\in\mathbb{N}_+$, the following bound on the MSE $\E[\|
          \uS-\uu \|^2]$ holds:
	\begin{equation} \label{3t}
	 \E[\| \uS -\uu \|^2] \leq E_1(1-\epsilon)^k + E_2\theta(n)^2 +E_3 h^{2p+2}\;,
	\end{equation}
	with constant $E_1$, $E_2$ and $E_3$ independent of $h, n$ and $k$, with $\epsilon$
        as in Corollary \ref{co:SAGA} and with $p$,
        $\theta(\cdot)$ from Assumptions \ref{ass:FErate} and
        \ref{ass:conv_prob}.
	\end{theorem}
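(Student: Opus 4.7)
The plan is to decompose the total error $\|\uS - \uu\|$ into an \emph{optimization} component and a \emph{discretization} component via a triangle inequality and the elementary bound $(a+b)^2 \le 2a^2+2b^2$, obtaining
\[
\E[\|\uS-\uu\|^2] \le 2\E[\|\uS - \uhhu\|^2] + 2\|\uhhu - \uu\|^2.
\]
The first term on the right captures how well SAGA-IS solves the fully discrete problem, while the second term captures how well the fully discrete optimum approximates the true optimum.

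For the first term, I would invoke Corollary \ref{co:SAGA} applied to the fully discrete OCP \eqref{eq:full_discret_gen}. As already observed in the paragraph following that corollary, the functionals $f^h_i$ satisfy the same strong convexity and Lipschitz bounds with the same constants $l$ and $L$ as $f_i$, and the condition $\Stt\le\widetilde{S}$ is a property of the quadrature weights alone and hence unchanged by the spatial discretization. Therefore the convergence result transfers verbatim to yield
\[
\E[\|\uS - \uhhu\|^2] \le E_1 (1-\epsilon)^k,
\]
with the same exponent $\epsilon$ as in Corollary \ref{co:SAGA}.

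For the second term, I would combine the spatial and quadrature error bounds established in Section~\ref{sec15}. Writing $\|\uhhu - \uu\| \le \|\uhhu - \uuh\| + \|\uuh - \uu\|$, Assumption \ref{ass:FErate} bounds the second summand by $A_1 h^{p+1}$. For the first summand, the same argument leading to Assumption \ref{ass:conv_prob}, applied at the discrete level (i.e.\ to the OCP \eqref{eq:FE_OCP} with expectation replaced by the quadrature $\widehat{E}$), gives a bound of order $\theta(n)$, with constants uniform in $h$ thanks to the $h$-independent strong convexity constant $l$ and the $h$-independent bound on $(A^h_\omega)^{-1}$. Squaring and applying $(a+b)^2\le 2a^2+2b^2$ once more yields $\|\uhhu - \uu\|^2 \le E_3' h^{2p+2} + E_2' \theta(n)^2$ for suitable $h,n,k$-independent constants.

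Assembling the two estimates produces the claimed bound \eqref{3t} with $E_1$ as above and appropriately enlarged constants $E_2$ and $E_3$. The only mild subtlety I foresee is justifying that the quadrature-error bound holds also for the spatially discrete OCP with a constant independent of $h$; this follows because all arguments behind Assumption \ref{ass:conv_prob} (optimality conditions, coercivity, Lipschitz bounds) use only Assumption \ref{ass:general}, whose constants $a_{\min},a_{\max},M_1,M_2,\beta$ are inherited unchanged by the Galerkin discretization. No other step requires more than the already-proved ingredients.
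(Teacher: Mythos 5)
Your proposal is correct and follows essentially the same route as the paper: the paper's proof uses the three-term decomposition $\E[\|\uS-\uu\|^2]\le 3\E[\|\uS-\uhhu\|^2]+3\|\uhhu-\uuh\|^2+3\|\uuh-\uu\|^2$ and then cites Corollary \ref{co:SAGA} and Assumptions \ref{ass:FErate} and \ref{ass:conv_prob}, exactly as you do. If anything, you are more careful than the paper in flagging that the quadrature-error bound must hold for the spatially discretized problem with constants uniform in $h$ --- a point the paper's one-line proof leaves implicit.
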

	
	\begin{proof}
	We can decompose the total error in the three error contributions corresponding to the spatial discretization, the quadrature and the SAGA optimization procedure:
	\begin{equation}
	\E[\| \uS-\uu \|^2]\leq 3\underbrace{\E[\| \uS-\uhhu \|^2]}_{SAGA}+3\underbrace{\| \uhhu-\uuh \|^2}_{quadrature}+3\underbrace{\| \uuh-\uhu \|^2}_{Galerkin}
	\end{equation}
	where $\uhhu$ is the optimal solution of the fully-discrete
        OCP \eqref{eq:full_discret_gen} and $\uuh$ is the optimal
        control of the semi-discrete OCP \eqref{eq:FE_OCP}.  The
        result is straightforward using the bounds in Corollary
        \ref{co:SAGA} and Assumptions \ref{ass:FErate} and
        \ref{ass:conv_prob}.
	\end{proof}
        Similarly, we can derive a complexity result in the case of a
        sub-exponential decay of the quadrature error, as for the
        problem in Section \ref{sec2}. The computational work model
        that we consider here is the same as in subsection
        \ref{sec:complexity_SG}.
	\begin{corollary}\label{corr:complexity2}
        Let the same hypotheses as in Theorem \ref{3T} hold, and
        assume further a quadrature error of the form
        $\theta(n)=E_4e^{-s\sqrt[M]{n}}$ for some $s>0$ and
        $M\in\mathbb{N}_+$. In order to guarantee a MSE $\E[\|
          \uS - \uu \|^2] \lesssim tol^2$, the total
        required computational work using the SAGA-IS algorithm
        is bounded by
	\begin{equation}
            W  \lesssim \left(\log(tol^{-1})\right)^{M+1} tol^{-\frac{d \gamma}{r+1}}.
	\end{equation}

        Moreover, the memory space required to store the history of
        the computed gradients at each iteration scales as
	\begin{equation}
	storage=O\left(\left(\log(tol^{-1})\right)^{M} tol^{-\frac{d}{r+1}}\right).
	\end{equation}
	\end{corollary}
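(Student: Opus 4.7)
The plan is to balance the three error contributions in the bound \eqref{3t} of Theorem \ref{3T}, each against $tol^2$, then back out the implied scalings of $h$, $n$, and $k$ in terms of the tolerance, and finally plug these into the computational work and storage models. I will be careful to track the fact that the SAGA rate $\epsilon$ itself depends on $n$, since this coupling is what ultimately produces the logarithmic factor.

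First I would set each of the three terms on the right-hand side of \eqref{3t} equal to (a constant times) $tol^2$. From the spatial term $E_3 h^{2p+2}$ with $p=r$ one obtains $h \sim tol^{1/(r+1)}$, so a single PDE solve costs $C_h \sim tol^{-d\gamma/(r+1)}$. From the quadrature term $E_2\theta(n)^2 = E_2 E_4^2 e^{-2s\sqrt[M]{n}}$ one gets $\sqrt[M]{n} \sim \log(tol^{-1})$, i.e.\ $n \sim (\log(tol^{-1}))^M$. For the SAGA contribution, Corollary \ref{co:SAGA} gives $\epsilon = \min\{l^2/(100\widetilde{S}L^2),\, 1/(2n)\}$; for the $n$ just chosen, $n \to \infty$ as $tol\to 0$, so asymptotically the binding branch is $\epsilon \sim 1/(2n)$. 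Imposing $E_1(1-\epsilon)^k \lesssim tol^2$ and taking logarithms gives $k \gtrsim \epsilon^{-1}\log(tol^{-1}) \sim n \log(tol^{-1}) \sim (\log(tol^{-1}))^{M+1}$.

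Next I would substitute into the work model. Since each iteration of SAGA-IS costs $2$ PDE solves (one primal, one adjoint), both of cost $C_h$, the total work is
\begin{equation*}
W = 2 k\, C_h \lesssim (\log(tol^{-1}))^{M+1} \cdot tol^{-d\gamma/(r+1)},
\end{equation*}
which is the claimed bound. For the storage statement, I would recall from the algorithmic description that SAGA keeps in memory one gradient per quadrature point, i.e.\ $n$ vectors of size $O(h^{-d})$ (plus the running sum $G_k$ and the current control, which are lower-order). Hence
\begin{equation*}
\text{storage} \lesssim n \cdot h^{-d} \lesssim (\log(tol^{-1}))^M \cdot tol^{-d/(r+1)}.
\end{equation*}

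The only genuinely delicate point is recognizing that in the relevant asymptotic regime the SAGA rate is controlled by the $1/(2n)$ branch of $\epsilon$ rather than by the $n$-independent branch $l^2/(100\widetilde{S}L^2)$: because $n$ grows only polylogarithmically in $tol^{-1}$, this branch is indeed active for $tol$ small enough, and it is what produces the extra $\log(tol^{-1})$ factor in $W$ compared with the SG-IS complexity of Corollary \ref{corr:complexity1}. Everything else is a routine substitution once the three terms have been balanced, and the uniform-in-$n$ hypothesis $\Stt \le \widetilde{S}$ is used only through Corollary \ref{co:SAGA} to ensure that the constants $E_1,E_2,E_3$ and the rate expression for $\epsilon$ are $n$-independent.
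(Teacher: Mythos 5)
Your proposal is correct and follows essentially the same route as the paper's proof: equilibrate the three terms of \eqref{3t} against $tol^2$ to get $h\sim tol^{1/(r+1)}$, $n\sim(\log(tol^{-1}))^M$, and $k\sim n\log(tol^{-1})$, then substitute into $W=2C_h k$ and $storage\sim n h^{-d}$. Your explicit remark that the $1/(2n)$ branch of $\epsilon$ is the binding one is a point the paper handles implicitly via the expansion of $-\log(1-\tfrac{1}{2n})$, but the arguments are the same.
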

	\begin{proof}
	Using Theorem \ref{3T}, as we want to guarantee a MSE of size $O(tol^2)$, we can equalize the three terms on the right hand side of \eqref{3t} to $tol^2$ and finally get:
	\[
          h\sim tol^{\frac{1}{r+1}}, \quad n\sim\left( \log(tol^{-1}) \right)^M, \quad k\sim\frac{2 \log(tol^{-1})}{-\log(1-\frac{1}{2n})},
        \]
	so we obtain asymptotically
	\[
          k \sim {4 n \log(tol^{-1})}\sim 4 \left( \log(tol^{-1}) \right)^M \log(tol^{-1})\sim \left(\log(tol^{-1})\right)^{M+1} \]
	Therefore, the total work $W=2C_h k$ scales asymptotically as
	\begin{equation}
	W \lesssim \left(\log(tol^{-1})\right)^{M+1} tol^{-\frac{d \gamma}{r+1}}.
	\end{equation}
	The memory space required to store the history of all the $n$
        computed gradients is proportional to $n h^{-d}$, so:
	\begin{equation}
	storage\lesssim \left(\log(tol^{-1})\right)^{M} tol^{-\frac{d}{r+1}}
	\end{equation}
	\end{proof}
	
	The computational work and storage requirements for SAGA
        stated in Corollary \ref{corr:complexity2} are reported in
        Table \ref{tab1}.
        \begin{table}
	\begin{center}
	\begin{tabular}{ r | c| c| c  }
	$O(\cdot)$& CG & SG & SAGA \\
	\hline
	W&$\left(\log(tol^{-1})\right)^{M+1} tol^{\frac{-d \gamma}{r+1}}$& $ tol^{-2-\frac{-d \gamma}{r+1}} $ &$\left(\log(tol^{-1})\right)^{M+1} tol^{\frac{-d \gamma}{r+1}}$ \\
	\hline
	storage& $ tol^{\frac{-d}{r+1}}$& $ tol^{\frac{-d}{r+1}}$ &$\left(\log(tol^{-1})\right)^{M} tol^{\frac{-d}{r+1}}$\end{tabular}
	\end{center} 
	\caption{Computational work and required memory storage for the SAGA-IS algorithm \eqref{SAGAref}, the SG-IS algorithm \eqref{modifiedSG} and the CG method  to solve the OCP \eqref{eq:weak_OCP}.} \label{tab1}
	\end{table}
        For comparison, we state in the same Table also the
        computational work and storage requirement of the SG-IS
        algorithm \eqref{modifiedSG}, as well as a Conjugate Gradient
        (CG) algorithm, both applied to the fully discrete OCP
        \eqref{eq:full_discret_gen} based on the same quadrature
        formula and spatial approximation as for SAGA (we refer to
        \cite{MartinKrumscheidNobile} where these results have been
        derived in the context of a Monte Carlo approximation). A
        naive implementation of the CG algorithm would require to
        store the gradient computed in each quadrature point, hence a
        storage of $O(nh^{-d})$. Alternatively, one can store only the
        partial weighted sum of the gradients and update it as soon as
        the gradient in a new quadrature point has been computed,
        which brings down the storage to $O(h^{-d})$.
	
}

{
	\section{Numerical example: elliptic PDE with transport term}\label{sec5}
	\subsection{Problem setting}
	In this section we verify the assertions on the order of
        convergence and computational complexity stated in Theorem
        \ref{3T} and Corollary \ref{corr:complexity2}. For this
        purpose, we consider the following problem, adapted from
        \cite{KouriSurowiec2018}, which fits the general framework of
        Section \ref{sec4}. The problem models the transport of the
        contaminant with the advection-diffusion equation. The goal is
        to determine optimal injection of chemicals to contrast the
        contaminant and thus minimize its total
        concentration. Uncertainties arise in the transport field $V$
        (i.e., wind), in the diffusion coefficient $\epsilon$ and in
        the sources $f$. Let \(D=[0,1]^{2}\) denote the physical
        domain and \(Y=H^{1}_{\Gamma_d}(D)\) the space of contaminant
        concentrations, vanishing on the boundary portion
        $\Gamma_d=\{0\}\times (0,1)$. The optimization problem reads
	$$
	\min _{u \in U} J(u) := \frac{1}{2}\E\left[ \int_{D} y_\omega(u)^{2} \mathrm{d} x\right]+\frac{\beta}{2} \| u \|_{L^2(D)}^2
	$$
	where $y_\omega(u)=y\in Y$ solves the variational problem
	\begin{equation}
	\label{num_primal}
	\int_{D}\{\epsilon(\omega) \nabla y \cdot \nabla v+(V(\omega) \cdot \nabla y) v\} \mathrm{d} x=\int_{D}(f(\omega)- u) v \mathrm{d} x, \quad \forall v\in Y, \;\; \text{and a.e. } \omega\in\Gamma
	\end{equation}
	Homogeneous Dirichlet boundary conditions have been applied on
        $\Gamma_d$, whereas homogeneous Neumann conditions have been
        applied on the remaining portion of the boundary. The control
        space is \(U=L^2(D)\) and $\beta$ is the regularization
        parameter, also called price of energy. The PDE coefficients
        \(\epsilon, \mathrm{V},\) and \(f\) are random fields: the
        source term is given by
        $$
	f(x,\omega)= \exp \left(-\frac{\|x-z(\omega)\|^2}{2 \sigma^{2}}\right)
	$$
	where the location $z(\omega)=(\xi_1(\omega), \xi_2(\omega))$ is a random variable uniformly distributed in $D$, and \(\sigma=1\); the diffusion coefficient is given by
	$$
	\epsilon(\omega)=0.5+ \exp (\xi_3(\omega)-1).
	$$
	with $\xi_3(\omega)$ uniformly distributed in $[0,1]$; finally, the transport field $V$ is given by
	$$
	V(x, \omega)=\left[\begin{array}{c}
	\xi_4(\omega)-\xi_5(\omega) x_{1} \\
	\xi_5(\omega) x_{2}
	\end{array}\right]
	$$
        with $\xi_4(\omega)$ and $\xi_5(\omega)$ uniformly distributed
        in $[0,1]$. All random variables $\xi_1,\ldots,\xi_5$ are
        mutually independent. The weak formulation of the adjoint
        problem simply writes: find $p=p_\omega(u)\in Y$ such that
	\begin{equation}
	\int_{D}\{\epsilon(\omega) \nabla v \cdot \nabla p+ (V(\omega) \cdot \nabla v) p\} \mathrm{d} x=\int_{D}y v \mathrm{d} x, \quad \forall v\in Y, \;\; \text{and a.e. } \omega\in\Gamma
	\end{equation}
        and has homogeneous Dirichlet boundary conditions on
        $\Gamma_d$ and homogeneous co-normal derivative
        $\frac{\partial p(u)}{\partial n_{A}}=\epsilon\frac{\partial
          p}{\partial n} + (V\cdot n) p = 0$ on the remaining portion
        of the boundary. The gradient of $J$ then writes
	$$\nabla J(u)=\beta u + \E[p_\omega(u)].$$ We have chosen
        $\beta=10^{-4}$ in the objective functional. For the FE
        approximation, we have considered a structured triangular grid
        of mesh size $h$ where each side of the domain $D$ is divided
        into $1/h$ sub-intervals and used piece-wise linear finite
        elements (i.e. $r=1$). For the approximation of the
        expectation in the objective functional, we have used a full
        tensor Gauss-Legendre quadrature formula with the same number
        $q$ of quadrature knots in each random variable $\xi_j$,
        $j=1,\ldots,5$. All calculations have been performed using the
        FE library Freefem++ \cite{MR3043640}.

        \subsection{Performance of SAGA and comparison with CG}
	In this subsection, we consider the SAGA method using a fixed
        mesh size $h=2^{-3}$ and study its convergence, with respect
        to the iteration counter, for different levels of the full
        tensor Gauss-Legendre quadrature formula, i.e. a different
        number $q$ of points in each random variable (the total number
        of quadrature points being $q^5$). For each $q$, we compute a
        reference solution $u_{ref}$ by CG up to convergence (using
        the same FE mesh size $h=2^{-3}$). Then, we perform $20000$
        SAGA iterations and compute the $error=u_k-u_{ref}$, at
        each iteration $k$, w.r.t. the reference solution.  We repeat
        the computation $40$ times, independently, to estimate the
        log-mean error $\log \E[\|error\|]$ (hereafter $\log(\cdot)$
        refers to the base 10 logarithm). In all cases we have used a
        step size $\tau=10$.
	\begin{figure}
	\begin{subfigure}{.49\textwidth}
	  \centering
	  \includegraphics[width=1\linewidth]{./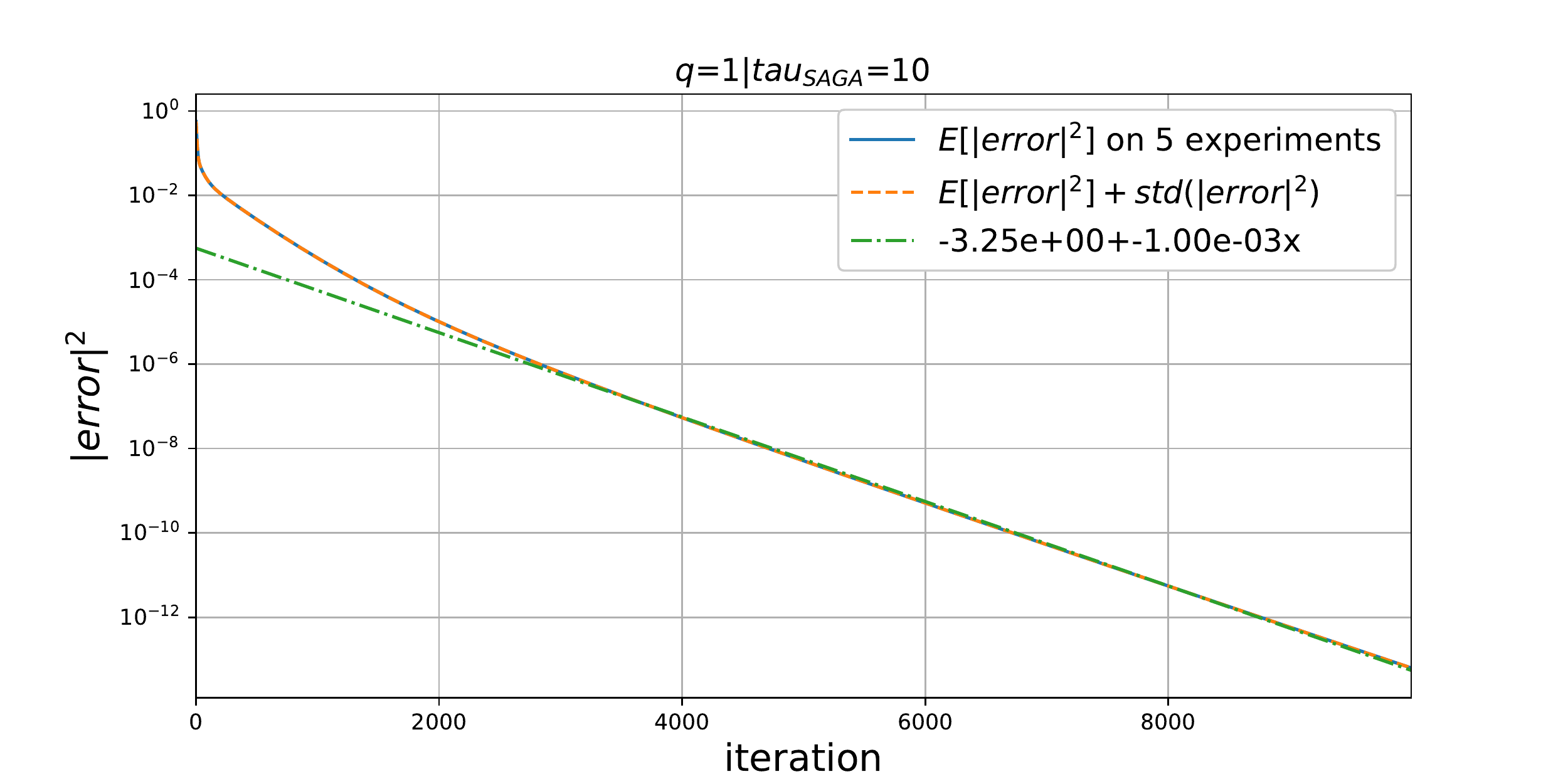}  
	  \caption{$q=1$}
	  \label{fig:sub-00}
	\end{subfigure}
	\begin{subfigure}{.49\textwidth}
	  \centering
	  \includegraphics[width=1\linewidth]{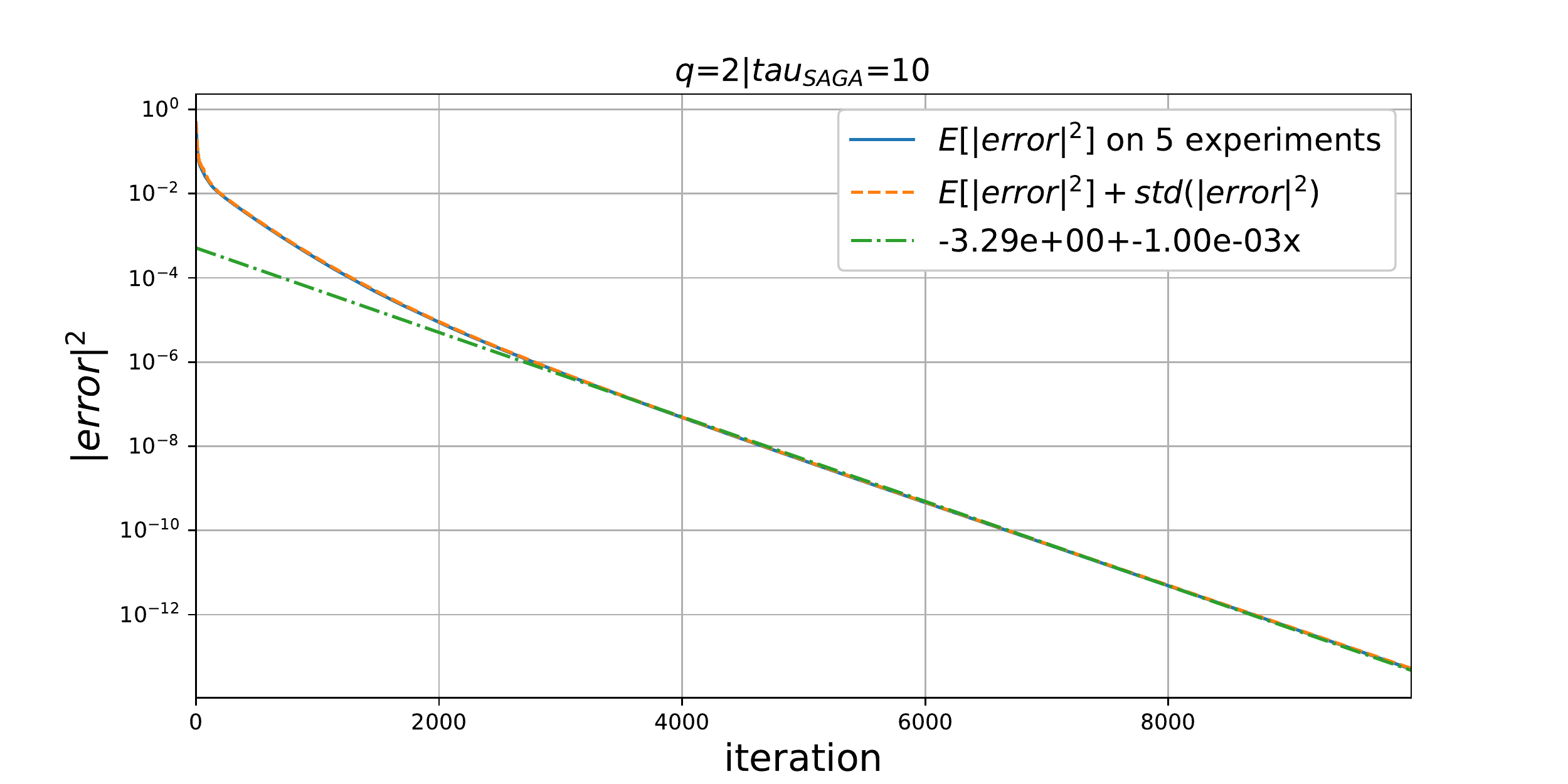}  
	  \caption{$q=2$}
	  \label{fig:sub-0}
	\end{subfigure}
	\newline
	\begin{subfigure}{.49\textwidth}
	  \centering
	  \includegraphics[width=1\linewidth]{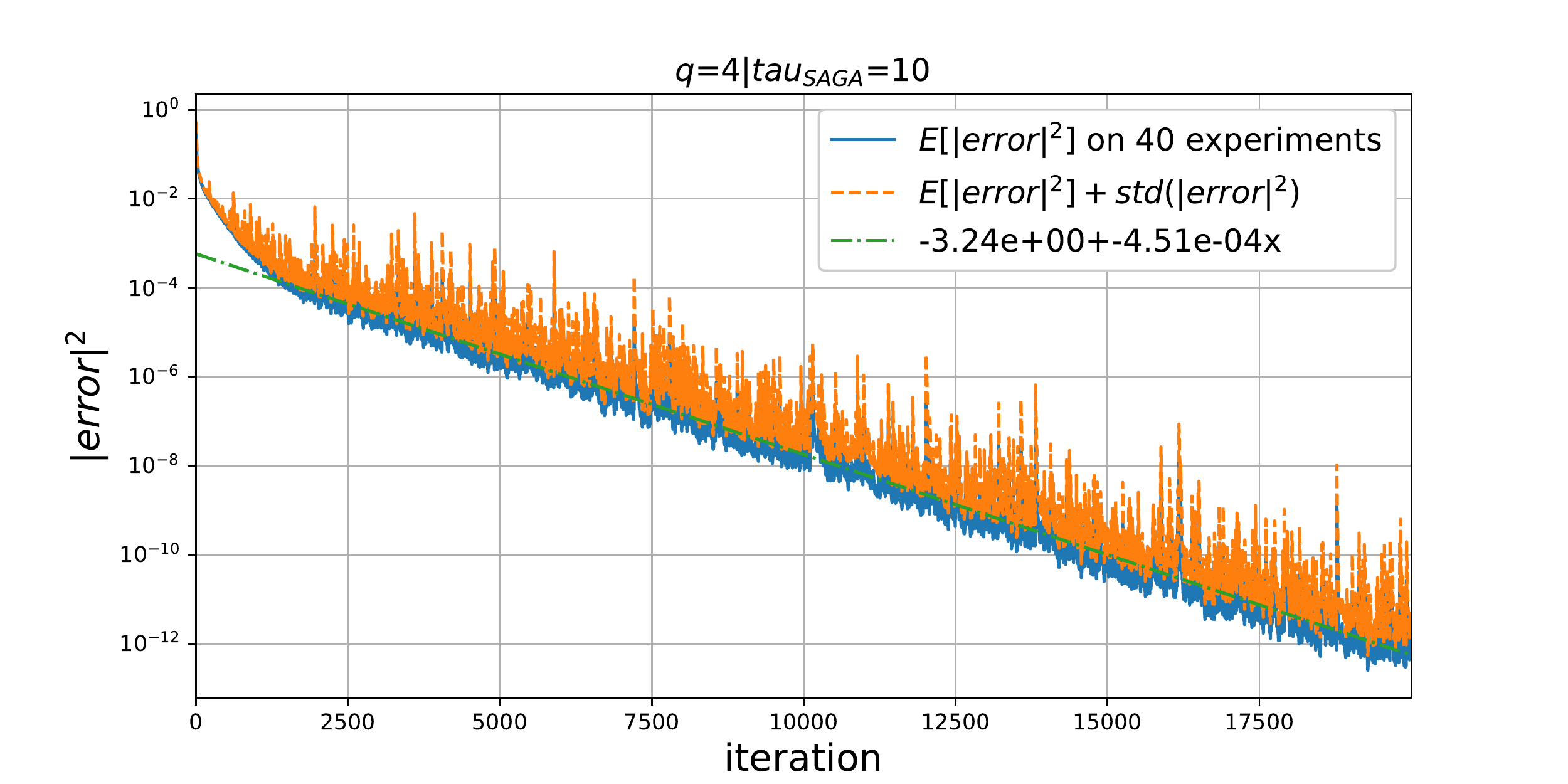}  
	  \caption{$q=4$}
	  \label{fig:sub-first}
	\end{subfigure}
	\begin{subfigure}{.49\textwidth}
	  \centering
	  \includegraphics[width=1\linewidth]{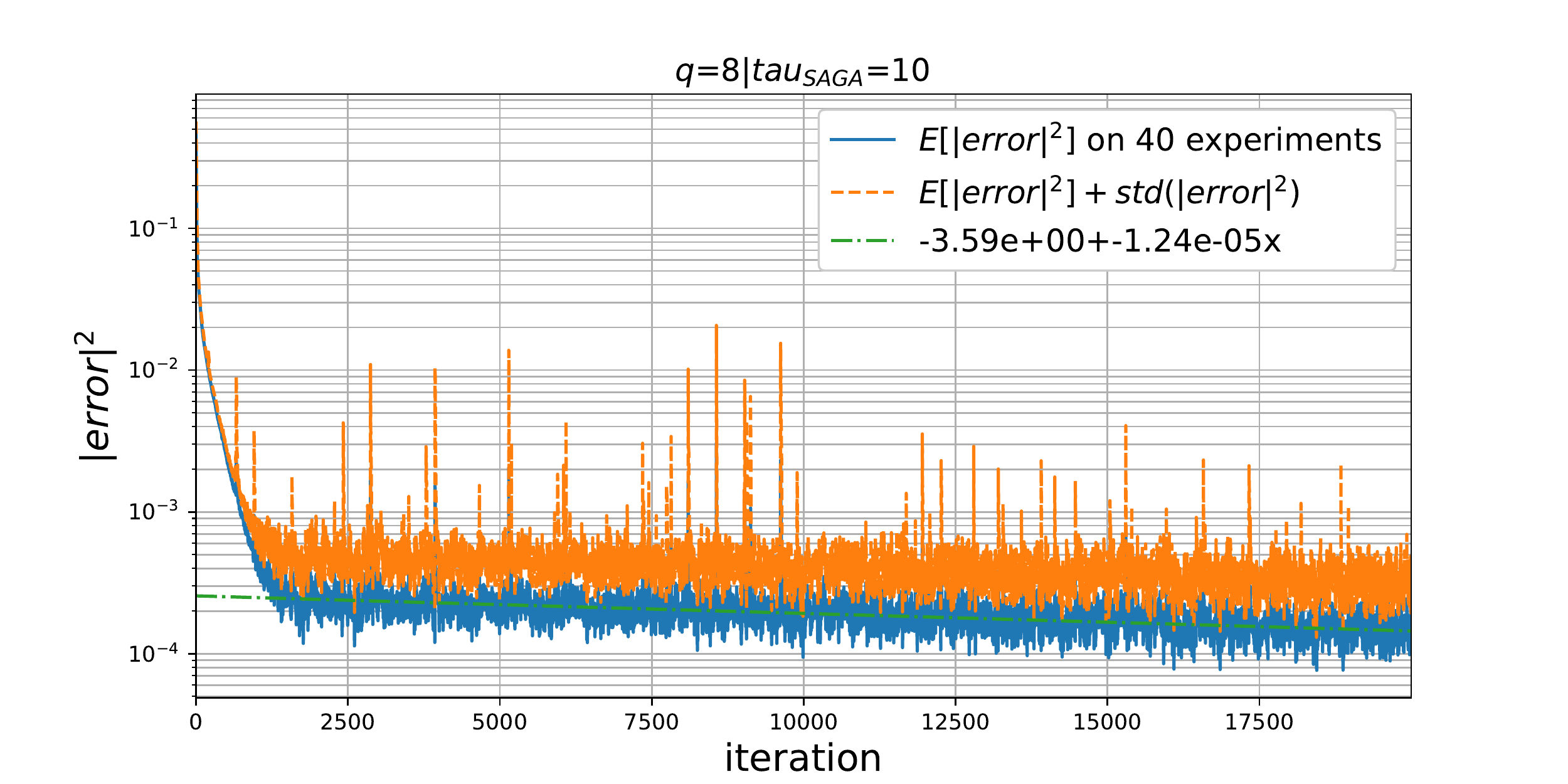}  
	  \caption{$q=8$}
	  \label{fig:sub-second}
	\end{subfigure}
	\newline
	\begin{subfigure}{.49\textwidth}
	  \centering
	  \includegraphics[width=1\linewidth]{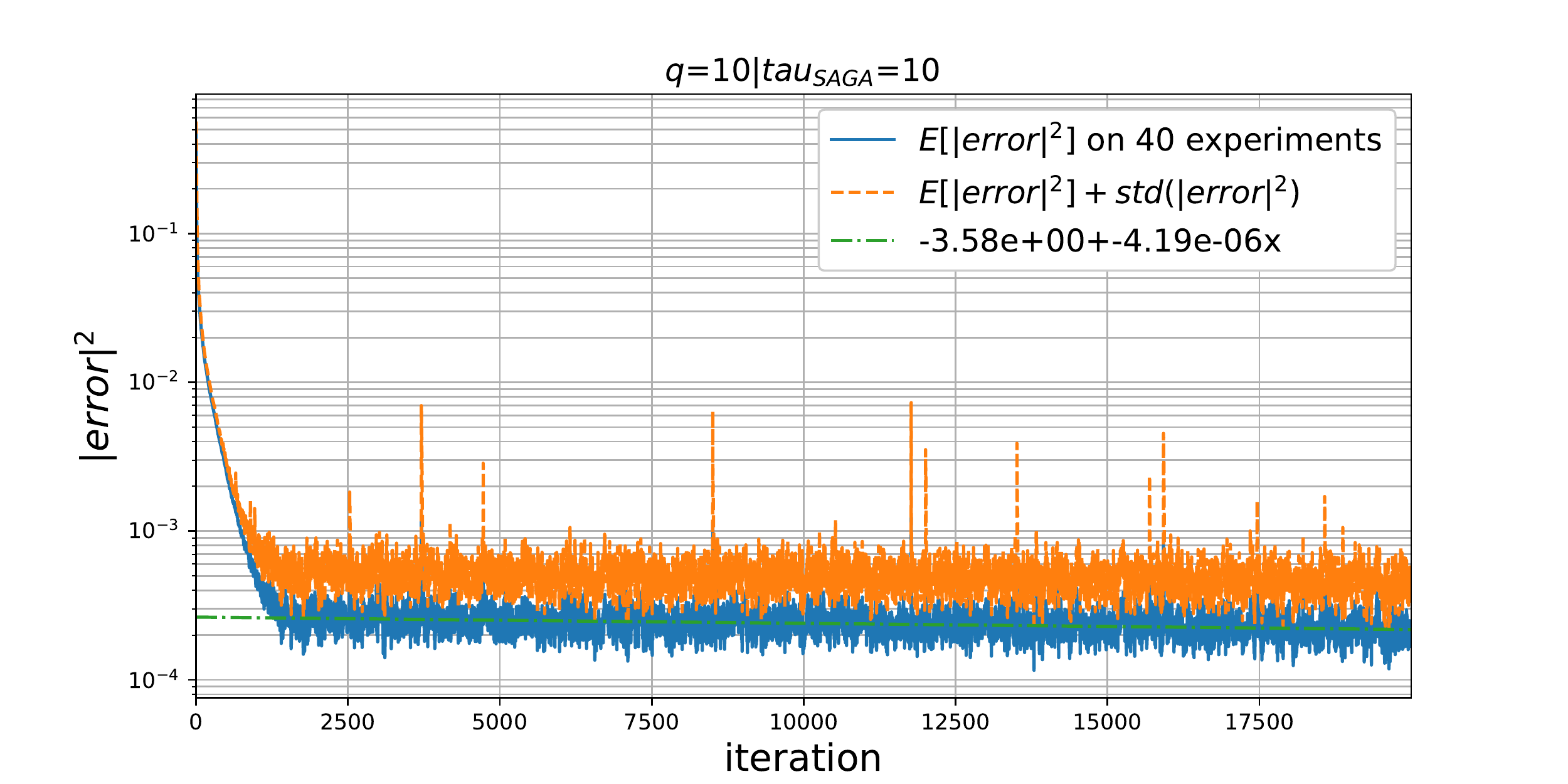}  
	  \caption{$q=10$}
	  \label{fig:subthird}
	\end{subfigure}
	\begin{subfigure}{.49\textwidth}
	  \centering
	  \includegraphics[width=1\linewidth]{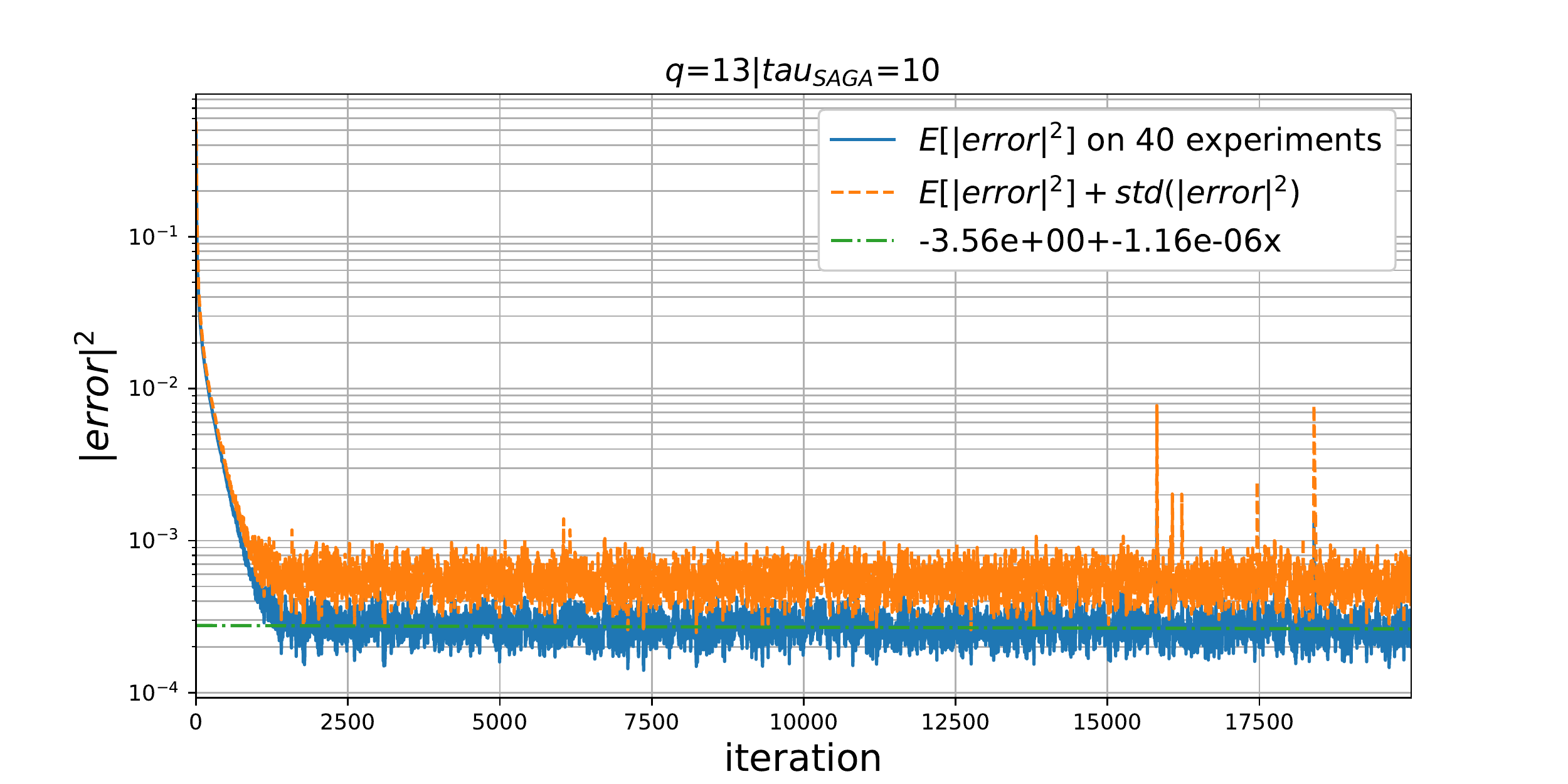}  
	  \caption{$q=13$}
	  \label{fig:sub-fourth}
	\end{subfigure}
	\caption{Convergence of SAGA w.r.t. the iteration counter for different $q$ and fixed FE mesh}
	\label{convSAGAinit0}
	\end{figure}
	
	We show in Figure \ref{convSAGAinit0} the convergence plots of
        $\log(\E[\| error \|])$ versus the number of iterations, for
        $q \in \{ 2,4,8,13 \}$ and in Figure \ref{SAGAonlyOverqZoom} a
        zoom on the first $2000$ iterations. We clearly observe two
        regimes: a first one over the first few hundreds of
        iterations, of faster exponential convergence, and a second
        one afterwards, of slower, but still exponential convergence.
        We assess hereafter that the slower rate observed behaves as
        predicted by our analysis in Corollary \ref{co:SAGA}. We are
        not able to explain at the moment, however, the faster initial
        regime.
	\begin{figure}[!ht]
	\includegraphics[width=1\textwidth]{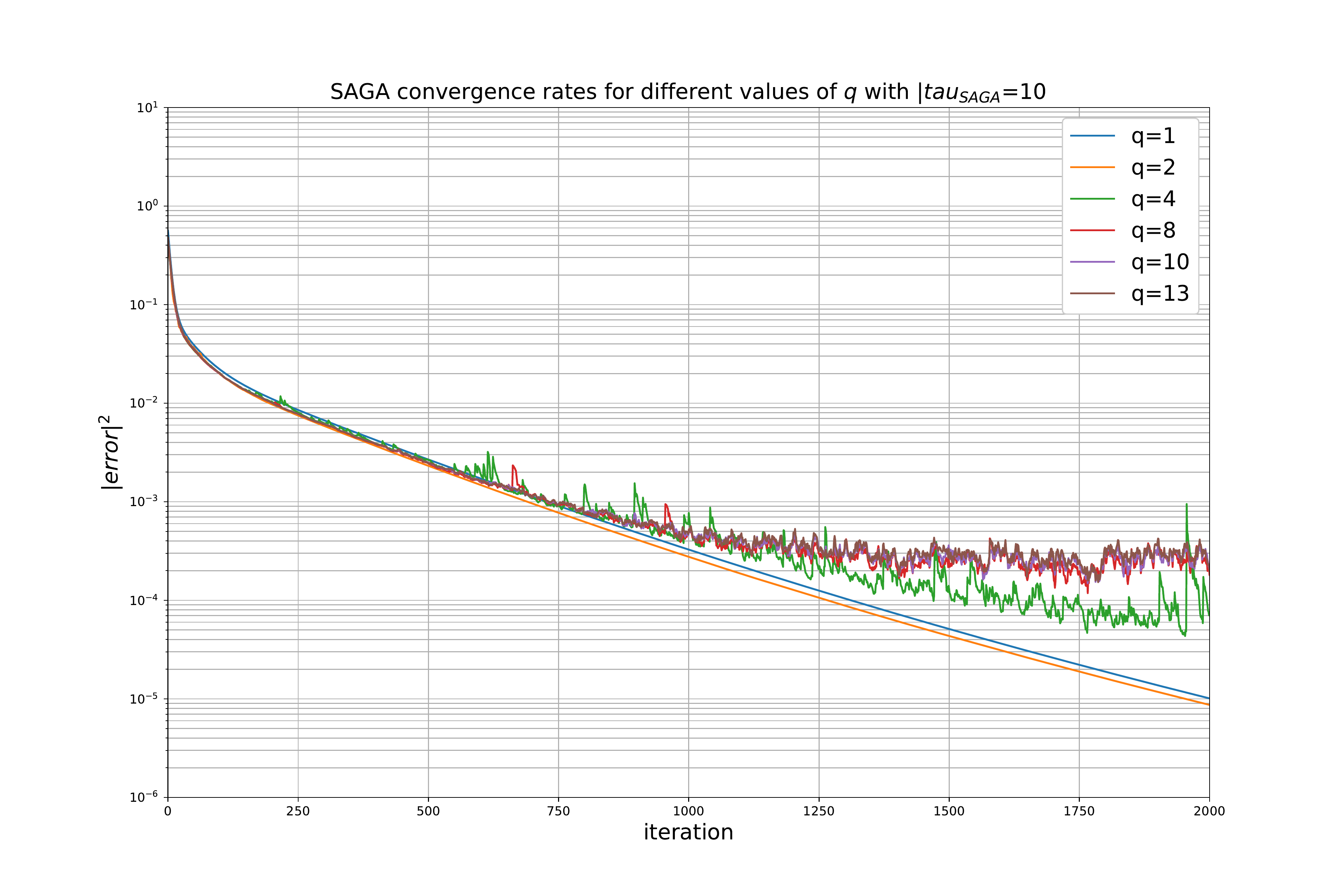}
	\caption{Convergence of SAGA algorithm w.r.t. the iteration counter, for different values of $q$ and fixed FE mesh. Zoom of Figure \ref{convSAGAinit0} on the first $2000$ iterations}
	\label{SAGAonlyOverqZoom}
	\end{figure}
	
	In order to verify quantitatively the exponential error decay
        $\ea (1-\epsilon)^k$ of equation \eqref{SAGArate}, with
        $\epsilon \sim \frac{1}{n}$, we have estimated by least squares
        fit the constant $\ea$ and rate $\epsilon$ for each value of
        $q$ {(we have taken only the second regime into consideration
        in the least squares fit)}. In Figure
        \ref{constSAGAinit0L}, we plot the ratio between the estimated
        convergence rate $\epsilon_{est}$ and the theoretical one
        $\epsilon_{th}=\frac{1}{2n}=\frac{1}{2q^5}$ for the different
        tested values of $q$. Similarly, in Figure
        \ref{constSAGAinit0R} we plot the estimated constant $\ea$ versus $q$.
          estimate the constants but you seem to be using some
          In both cases, the plotted quantities
        vary very little with $q \in \{ 3, \dots, 13 \}$ which
        confirms the validity of our theoretical analysis.
	\begin{figure}[!ht]
	\begin{subfigure}{.49\textwidth}
	  \centering
	 \resizebox{\columnwidth}{!}{ 
\begin{tikzpicture}

\definecolor{color0}{rgb}{0.12156862745098,0.466666666666667,0.705882352941177}

\begin{axis}[
legend cell align={left},
legend style={fill opacity=0.8, draw opacity=1, text opacity=1, at={(0.03,0.97)}, anchor=north west, draw=white!80!black},
log basis y={10},
tick align=outside,
tick pos=left,
x grid style={white!69.0196078431373!black},
xlabel={\(\displaystyle q\)},
xmin=1, xmax=13,
xtick style={color=black},
y grid style={white!69.0196078431373!black},
ymin=0.00338412039558712, ymax=2.94012120323038,
ymode=log,
ytick style={color=black}
]
\path [draw=black, semithick, dash pattern=on 5.55pt off 2.4pt]
(axis cs:0,1)
--(axis cs:13,1);

\addplot [semithick, color0]
table {%
1 0.00460291602902529
2 0.147697010730219
3 1.12720285060352
4 2.12595616461613
5 1.97778048030076
7 1.97311044152884
8 1.86854288246104
10 1.92868110564515
11 1.9506309673903
12 2.1616132179272
13 1.97822479825347
};
\addlegendentry{$\epsilon_{exp.}/\epsilon_{th}$}
\end{axis}

\end{tikzpicture}  }
	  \caption{$\epsilon_{exp.}/\epsilon_{th}$ versus $q$}
	   \label{constSAGAinit0L}
	\end{subfigure}
	\begin{subfigure}{.49\textwidth}
	  \centering
	\resizebox{\columnwidth}{!}{  
\begin{tikzpicture}

\definecolor{color0}{rgb}{0.12156862745098,0.466666666666667,0.705882352941177}

\begin{axis}[
legend cell align={left},
legend style={fill opacity=0.8, draw opacity=1, text opacity=1, draw=white!80!black},
tick align=outside,
tick pos=left,
x grid style={white!69.0196078431373!black},
xlabel={\(\displaystyle q\)},
xmin=1, xmax=13,
xtick style={color=black},
y grid style={white!69.0196078431373!black},
ymin=2.87269927106166, ymax=3.68396902596307,
ytick style={color=black}
]
\addplot [semithick, color0]
table {%
1 3.647093128013
2 2.98788352845044
3 2.96384405977239
4 2.92029746816107
5 2.99803371653575
7 2.95403540861517
8 2.90957516901173
10 2.960883328647
11 2.96613396580826
12 2.94021416683429
13 2.96806382449473
};
\addlegendentry{$\ea$}
\end{axis}

\end{tikzpicture}  }
	  \caption{Estimated constant $\ea$ versus $q$}
	\label{constSAGAinit0R}
	\end{subfigure}
	\caption{Estimation of the SAGA rate $\epsilon$ and constant $\ea$ versus $q$}
	\label{constSAGAinit}
	\end{figure}
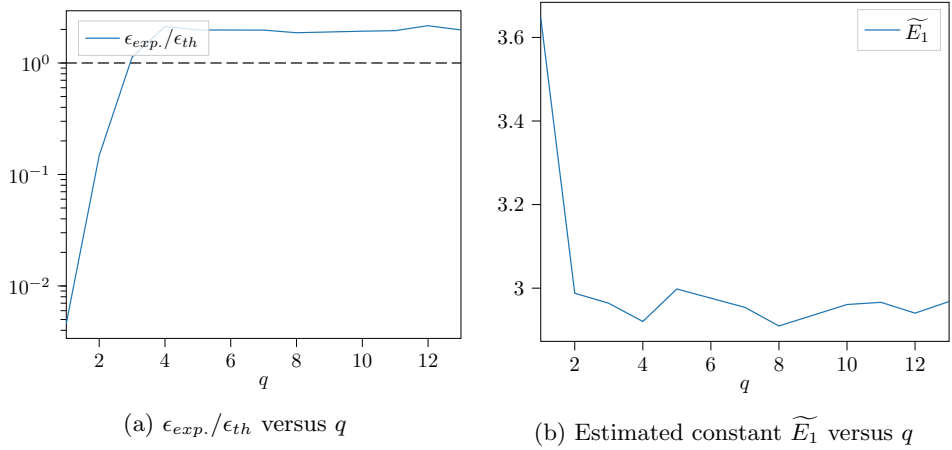

        We analyze next the sensitivity of the SAGA algorithm
        w.r.t. the step-size $\tau$. In Figure \ref{SAGAsensitivity},
       we plot the mean
          squared error $\E[\|u_k-u_{ref}\|^2]$ versus the iteration
          counter, for $q=5$, where the expectation is estimated from
          $5$ independent experiments. With a large step-size,
        $\tau=100$,
        the method diverges to infinity. Progressively decreasing
        $\tau$ to $10$, the method starts converging, with the
        expected exponential rate, although slowly. Among the
        different values that we have tried, a step size $\tau=10$
        seems to provide the fastest convergence. Further decreasing
        $\tau$ to $0.01$ makes SAGA converge poorly.
	\begin{figure}[!ht]
	\includegraphics[width=1\textwidth]{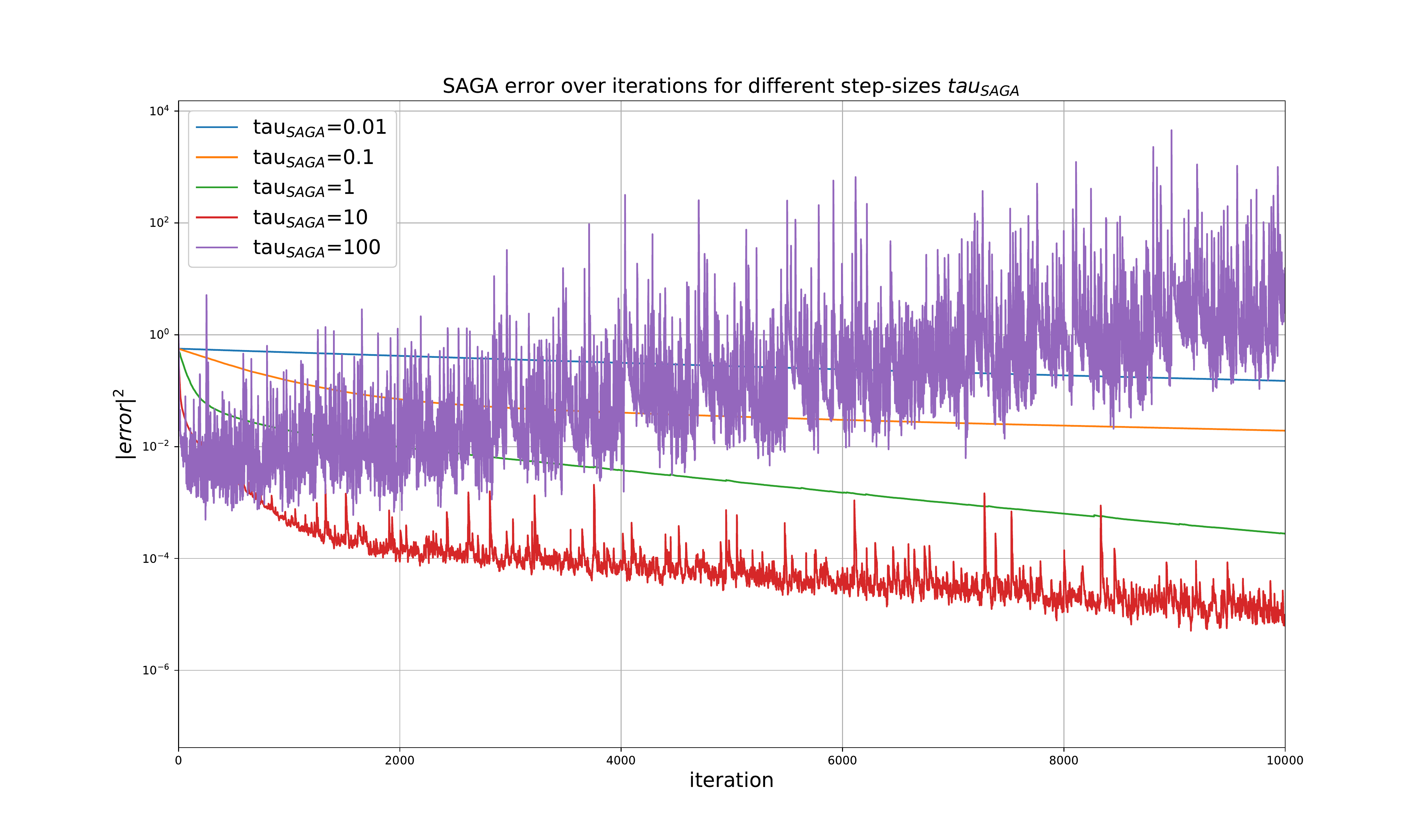}
	\vspace{-10mm}
	\caption{Sensitivity of SAGA (with $q=5$) to the choice of the step size $\tau$.}
	\label{SAGAsensitivity}
	\end{figure}
	
	In Figure \ref{SAGAvsFGbothopt1} we compare the convergence
        rate of SAGA, with $\tau=10$ with that of
        CG.  The error in both cases is plotted against the total
        number of PDE solves. The plot shows a faster convergence of
        CG than SAGA, asymptotically. However, SAGA features a smaller
        error in the pre-asymptotic regime, and delivers an acceptable
        solution, from a practical point of view, already before two
        full iterations of CG (2500 PDE solves). This makes it
        attractive in a limited budget context.
	\begin{figure}[!ht]
	\includegraphics[width=1\textwidth]{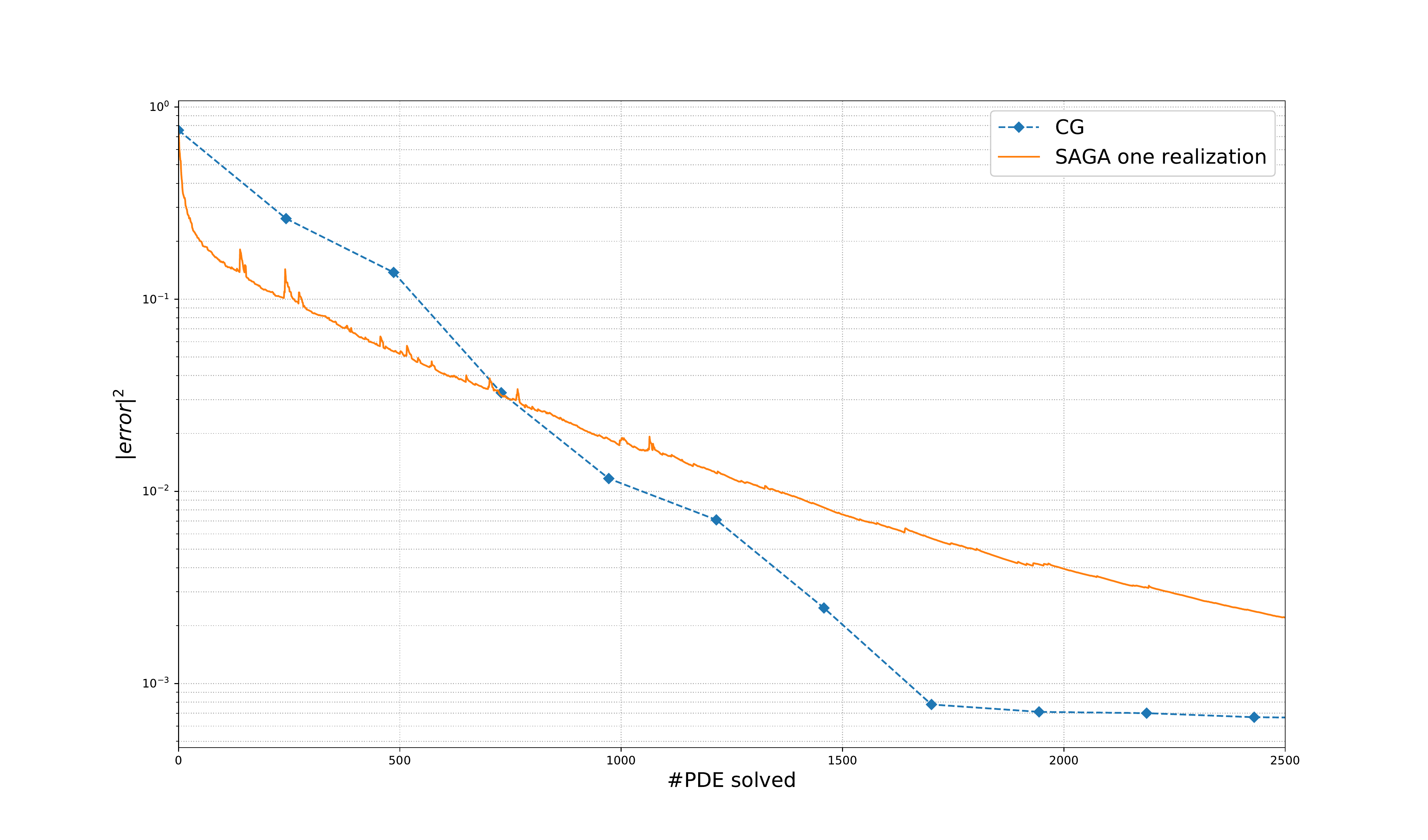}
	\vspace{-10mm}
	\caption{Comparison of SAGA with $\tau=10$ vs CG, for $q=3$, $h=\frac{1}{55}$.}
	\label{SAGAvsFGbothopt1}
	\end{figure}

	\subsubsection{Complexity results for the SAGA algorithm}
	We investigate here the complexity of the SAGA method \eqref{SAGAref}, for which we recall the error bound \eqref{3t} in the
	case of piece-wise linear FE (i.e.\ $r=1$) and a 5-dimensional probability space (i.e. $M=5$):
	\begin{equation}\label{3tnew}
	\E[\| \uS-\uu \|^2]\leq \ea (1-\epsilon(q))^k+\eb e^{-s q}+\ec h^{4}
	\end{equation}
	where $s$ is the rate of exponential convergence of the quadrature formula, and $q$ the number of knots used in each stochastic variable.
        To assess the complexity of the method and balance the error
        contributions, we need first to estimate the constants $\ea$,
        $\eb$, $\ec$ and rates $\epsilon$ and $s$. For this, we have
        proceeded in the following way:
        
       	\begin{itemize}
	\item \emph{Estimation of $\ea$ and $\epsilon=\epsilon(q)$.}
          This has been done already in the previous section using a
          fixed mesh with $h=2^{-3}$. From Figure \ref{constSAGAinit}
          we infer the values $\ea \approx 2.95$ and $\epsilon(q)
          \approx \frac{1.0}{q^5}$.  
\item \emph{Estimation of $\eb$ and $s$.} We used again a
          fixed mesh of size $h=2^{-3}$. First, we computed the
          reference solution with a fine Gauss-Legendre quadrature
          formula, i.e. $q=8$, using the CG algorithm until
          convergence. Then we computed the error for the approximated
          optimal control using only $q \in \{ 1, \dots, 7 \}$ points
          in each random variable, using again the CG algorithm until convergence.
            Results are detailed in Table \ref{c2}
          and plotted in Figure \ref{ColEst}. The error is the
          difference between the estimated optimal control using $q
          \in \{ 1, \dots, 7 \}$ knots in the quadrature formula, and
          the optimal control computed for $q=8$. We estimate $\eb
          \approx 10.43$ and $s \approx 3.697$.
	\begin{table}
	\begin{center}
	\begin{tabular}{c c}
	\hline
	  $q$  &        $\|error\|^2$ \\
	\hline
	 1 &  3.501974e-03 \\
	 2 &  7.842113e-07 \\
	 3 &  7.583597e-11 \\
	 4 &  6.019157e-15 \\
	 5 &  1.281663e-18 \\
	 6 &  3.673762e-22 \\
	 7 &  7.141313e-25 \\
	\hline
	\end{tabular}
	\caption{Quadrature error on the optimal control, versus the number of knots $q$ used in each random variable.} \label{c2}
	\end{center}
	\end{table}
	
	\begin{figure}[!ht]
	\includegraphics[width=1\textwidth]{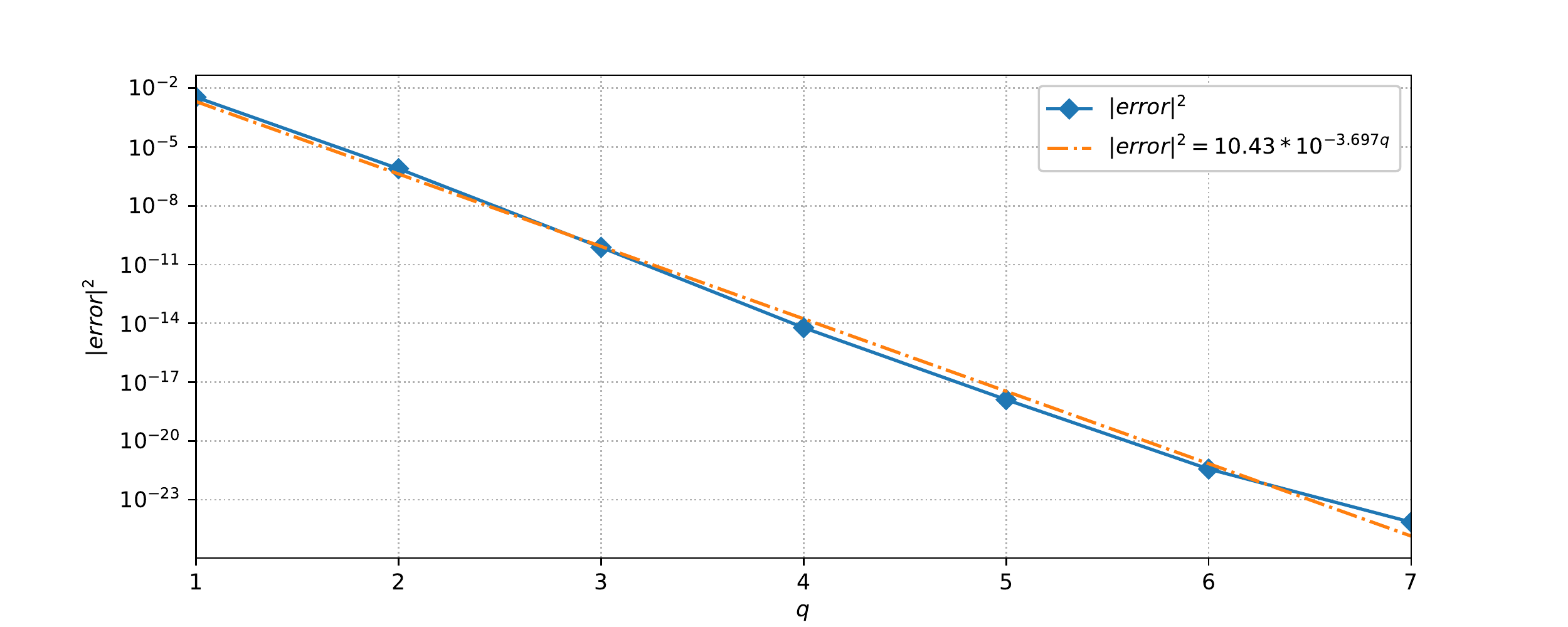}
	\caption{Fitting the (squared) quadrature error model $\eb e^{-s q}$ in \eqref{3tnew}.}
	\label{ColEst}
	\end{figure}

	\item \emph{Estimation of $\ec$.} We used a quadrature formula
          with $q=1$ knot in each random variable, and computed the
          reference solution on a fine mesh (with $h=2^{-8}$) using
          the CG algorithm until convergence. Then we
          run again the CG algorithm with the same setting,
          yet on coarser meshes with size $h=2^{-1}, \dots,
          2^{-8}$. The results are shown in Table \ref{c3FG} and Figure
          \ref{FEEst} which confirm a convergence of order $h^4$ of the squared error with an estimated constant $\ec \approx 2.96$.
	\begin{table}
	\begin{center}
	\begin{tabular}{c c}
	\hline
	    $h^{-1}$ &        $\|error\|^2$ \\
	\hline
	   2 &  1.153577e-01 \\
	   4 &  1.544431e-02 \\
	   8 &  1.068433e-03 \\
	  16 &  7.209996e-05 \\
	  32 &  4.561293e-06 \\
	  64 &  2.824879e-07 \\
	 128 &  1.695013e-08 \\
	 256 &  7.707942e-10 \\
	\hline
	\end{tabular}
	\end{center} 
	\caption{FE error on the optimal control (computed with $q=1$) versus the mesh size $h$.} \label{c3FG}
	\end{table}

	\begin{figure}
	\includegraphics[width=1\textwidth]{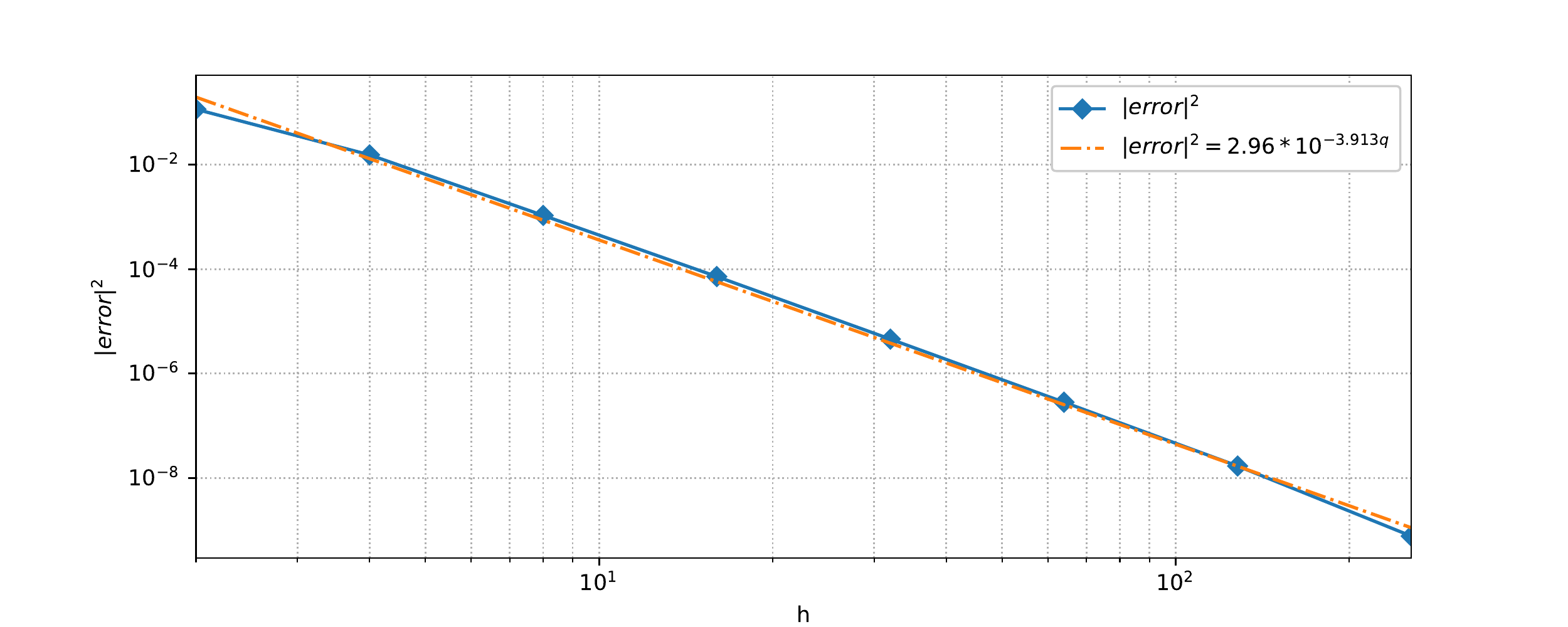}
	\caption{Fitting the (squared) FE error model $\ec h^{-4}$ in \eqref{3tnew}.}
	\label{FEEst}
	\end{figure}
	\end{itemize}

        We are now ready to assess the complexity of the SAGA
        algorithm. For this, for a given tolerance $tol$, we compute
        the optimal mesh size $h(tol)$, the optimal number of
        Gauss-Legendre points in the quadrature formula $q(tol)$, and
        the optimal number of SAGA iterations $k_{max}(tol)$, using
        the constants $\ea$, $\eb$ and $\ec$ and rates $\epsilon(q)$
        and $s$ estimated above. Then we run the SAGA algorithm up to
        iteration $k_{max}$, on a mesh of size $h(tol)$, using a
        quadrature formula with $q(tol)$ points in each random
        variable, and plot the  error on the optimal control for a single realization.
        The reference solution has been computed
        using the CG method, on a mesh of size $h=2^{-8}$, with a
        quadrature formula with $q=5$, until we reach a very small tolerance, ($k_{max}=30$, $\| \nabla J(u_{30})\| \leq 10^{-17}$, $\| u_{30}-u_{29} \| \leq 10^{-13}$)
        Figure \ref{fullWSAGA} shows the errors of such SAGA
        simulations versus the computational cost model $W=k_{max}
        h^{-d}$. Table \ref{fullSaga} gives details on
        the optimal discretization parameters $h, q, k_{max}$ as well
        as the required complexity for each considered tolerance.
        \begin{figure}[!ht]
	\includegraphics[width=1\textwidth]{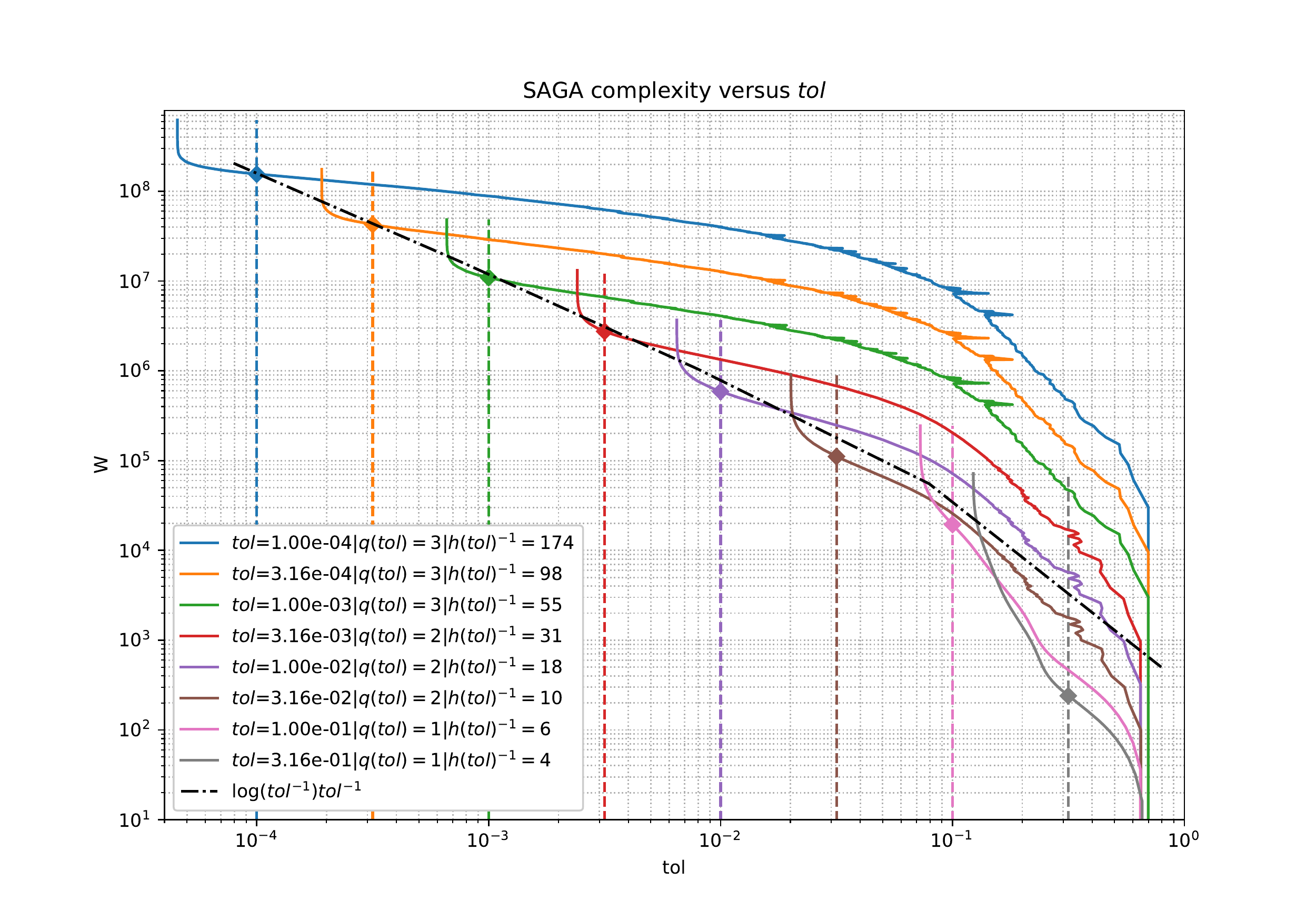}
	\vspace{-5mm}
	\caption{Computational work model versus estimated error  for the SAGA method \eqref{SAGAref}. Each curve represents the evolution of the error along the SAGA iterations; the diamond corresponds to the iteration $k_{\text{fst}}$, at which the targeted tolerance is first met. The optimal (theoretical) iteration counter $k_{max}(tol)$ is given for comparison purpose in Table \ref{fullSaga}.}
	\label{fullWSAGA}
	\end{figure}
	\begin{table}
	\begin{center}
	\begin{tabular}{cccccccc}
	\hline
	       $tol$ &  $q(tol)$ &   $h(tol)^{-1}$ & $k_{max}(tol)$ & error$(k_{max}(tol))$ & $k_{\text{fst}}$ & error$({k_{\text{fst}}})$ & $W$ \\
	\hline
	 3.162e-01 &  1 &    4 &  1124 &  1.237e-01 &        15 &      3.156e-01 &     2.400e+02 \\
	 1.000e-01 &  1 &    6 &  1699 &  7.277e-02 &       539 &      9.997e-02 &     1.940e+04 \\
	 3.162e-02 &  2 &   10 &  2274 &  2.017e-02 &      1111 &      3.161e-02 &     1.111e+05 \\
	 1.000e-02 &  2 &   18 &  2849 &  6.495e-03 &      1818 &      9.997e-03 &     5.890e+05 \\
	 3.162e-03 &  2 &   31 &  3424 &  2.415e-03 &      2864 &      3.162e-03 &     2.752e+06 \\
	 1.000e-03 &  3 &   55 &  3999 &  6.611e-04 &      3620 &      9.995e-04 &     1.095e+07 \\
	 3.162e-04 &  3 &   98 &  4574 &  1.909e-04 &      4443 &      3.160e-04 &     4.267e+07 \\
	 1.000e-04 &  3 &  174 &  5149 &  4.556e-05 &      5125 &      9.996e-05 &     1.552e+08 \\
	\hline
	\end{tabular}
	\end{center} 
	\caption{Final error for one realization of SAGA, versus the target tolerance $tol$. $k_{\text{fst}}$ corresponds to the iteration at which the targeted tolerance is first met.} \label{fullSaga}
	\end{table}
	The results follow well the predicted theoretical complexity
        given in Corollary \ref{corr:complexity2}, at least for small tolerances.

	\section{Conclusions}\label{conclusions}
	In this work, we have proposed a SAGA algorithm with
        importance sampling to solve numerically a quadratic optimal
        control problem for a coercive PDE with random coefficients,
        minimizing and expected regularized mean squares loss
        function, where the expectation in the objective functional
        has been approximated by a quadrature formula whereas the PDE
        has been discretized by a Galerkin method. The SAGA algorithm
        is a Stochastic Gradient type algorithm with a fixed-length
        memory term, which computes at each iteration the gradient of
        the objective functional in only one quadrature point,
        randomly chosen from a possibly non-uniform distribution. We
        have shown that the asymptotically optimal sampling
        distribution is the uniform one, over the quadrature points.
        We have also shown that, when equilibrating the three sources
        of errors, namely the PDE discretization error, the quadrature
        error and the error due to the SAGA optimization algorithm,
        the overall complexity, in terms of computational work versus
        prescribed tolerance, is asymptotically the same as the one of
        gradient based methods, such as Conjugate gradient
        (i.e. methods that sweep over all quadrature points at each
        iteration), as the tolerance goes to zero. However, as
        illustrated by our numerical experiments, the advantage of
        SAGA with respect to CG is in the pre-asymptotic regime, as
        acceptable solutions may be obtained already before a full
        sweep over all quadrature points is completed. The main
        limitation we see for this method is the required memory
        space, since the storage increases as the desired tolerance
        gets smaller, hence the number of quadrature points increases,
        and will reach at some point the memory limit of the employed
        machine. Although we point out that only one ``gradient'' term
        has to be updated at each iteration. Therefore, one could also
        dump all gradients on the hard disk and overwrite only one
        term per iteration. This will slightly slow down the
        execution, but performance will still be acceptable as long as
        the communication/access (a.k.a. Disk I/O) time is negligible
        compared to the time to solve one PDE.	
	
	As a particular example, we have considered a diffusion
        equation with diffusivity coefficient that depends
        analytically on a few uniformly distributed and independent
        random variables. For this problem, we have considered and
        analyzed a full tensor Gauss-Legendre quadrature formula,
        which, however, is affected by the curse of dimensionality,
        hence applicable only to problems for which the randomness can
        be described in terms of a small number of random
        variables. To overcome such curse of dimensionality, one could
        use sparse quadratures instead \cite{NobileSto, LeeGunzburger,
          Borzi2010}, whose weights, however, are not all
        positive. The result in Lemma \ref{SAGArecu} is still valid,
        as long as the approximate functional $\widehat{J}$ satisfies
        a strong convexity condition,
	\[
	  \frac{l}{2} \| u_1-u_2 \|^2 \leq \langle \nabla
          \widehat{J}(u_1)-\nabla \widehat{J}(u_2) , u_1-u_2 \rangle
          \quad \forall u_1,u_2 \in U,
	\]
	which might hold only for a sufficiently large number of
        quadrature points (see Remark
        \ref{rem:nonpositive_weights}). Also, because of the presence
        of negative weights, the quantity $\Stt$ might not be
        uniformly bounded in $n$, therefore, the results in Theorem
        \ref{generalRecu} and Corollary \ref{co:SAGA} might not apply
        to this case. These issues will be further investigated in a
        future work.  }
        The complexity analysis of the SAGA method presented in
          Section \ref{sec:complexity} assumed a sub-exponential decay
          of the quadrature error. Analogous complexity results could
          be derived under the assumption of an algebraic decay rate
          of the quadrature error as, for instance, for a MC or QMC
          quadrature. We should point out, however, that for a MC
          quadrature formula, the overall complexity will be dominated
          by the slow $n^{-\frac{1}{2}}$ Monte Carlo convergence rate
          and SAGA will not improve the complexity of SG, and is
          therefore not recommended in this case. On the other hand,
          SAGA gains over SG whenever the quadrature formula has a
          better convergence rate than MC. This will be the case for a
          QMC quadrature formula.

        \section*{Acknowledgments}
	F. Nobile and M. C. Martin have received support from the
        Center for ADvanced MOdeling Science (CADMOS). The second
        author acknowledges the support of the Swiss National Science
        Foundation under the Project n. 172678 “Uncertainty
        Quantification techniques for PDE constrained optimization and
        random evolution equations”.
	
	\bibliographystyle{siamplain}
	\bibliography{bib}
	\end{document}